\renewcommand{\mathcal}{\mathscr}
\def\R {\mathbb{R}}
\def\S {\mathbb{S}}
\def\S {\mathbb{S}}
\renewcommand{\epsilon}{\varepsilon}
\newcommand{\eps}{\varepsilon}
\newcommand{\e}{\varepsilon}
\renewcommand{\leq}{\leqslant}
\renewcommand{\le}{\leqslant}
\renewcommand{\geq}{\geqslant}
\renewcommand{\ge}{\geqslant}
\newcommand{\Per}{\mathrm{Per}}
\newtheorem{proposition}{Proposition}[section]
\newtheorem{theorem}[proposition]{Theorem}
\newtheorem{corollary}[proposition]{Corollary}
\newtheorem{lemma}[proposition]{Lemma}
\theoremstyle{definition}
\newtheorem{definition}[proposition]{Definition}
\newtheorem{remark}[proposition]{Remark}
\numberwithin{equation}{section}
\begin{document}

\title[Nonlocal curvature flows]{Fattening and nonfattening phenomena\\
for planar nonlocal curvature flows}

\thanks{
{\em Annalisa Cesaroni}:
Dipartimento di Scienze Statistiche,
Universit\`a di Padova, Via Battisti 241/243, 35121 Padova, Italy. {\tt annalisa.cesaroni@unipd.it}
\\
{\em Serena Dipierro}:
Department of Mathematics
and Statistics,
University of Western Australia,
35 Stirling Hwy, Crawley WA 6009, Australia,
Dipartimento di Matematica, Universit\`a di Milano,
Via Saldini 50, 20133 Milan, Italy.
{\tt serydipierro@yahoo.it}\\
{\em Matteo Novaga}: Dipartimento di Matematica,
Universit\`a di Pisa, 
Largo Pontecorvo 5, 56127 Pisa,
Italy. {\tt matteo.novaga@unipi.it}\\
{\em Enrico Valdinoci}:
Department of Mathematics
and Statistics,
University of Western Australia,
35 Stirling Hwy, Crawley WA 6009, Australia,
School of Mathematics
and Statistics,
University of Melbourne, 813 Swanston St, Parkville VIC 3010, Australia,
Dipartimento di Matematica, Universit\`a di Milano,
Via Saldini 50, 20133 Milan, Italy. {\tt enrico@math.utexas.edu}}

\author[A. Cesaroni]{Annalisa Cesaroni}
\author[S. Dipierro]{Serena Dipierro}
\author[M. Novaga]{Matteo Novaga}
\author[E. Valdinoci]{Enrico Valdinoci}

\subjclass[2010]{53C44, 
35D40, 
35R11 
58E12 
 }

\keywords{Nonlocal curvature flows, planar curves, uniqueness of the geometric evolution, fattening phenomenon, nonlocal perimeter functionals, minimal sets}

\maketitle

\begin{abstract}
We discuss fattening phenomenon for the evolution of sets according to their nonlocal curvature. More precisely,
we consider  a class of generalized curvatures which correspond to the first variation of suitable nonlocal perimeter functionals, defined  in terms of an interaction kernel $K$,
which is  symmetric, nonnegative,
possibly singular at the origin, and satisfies appropriate integrability conditions. 

We prove a general result  about uniqueness of the geometric evolutions
starting from regular sets with positive $K$-curvature in $\R^n$ 
and  we discuss the fattening phenomenon in $\R^2$ for the evolution
starting from the cross, showing  that this phenomenon is very sensitive
to the strength of the interactions. 
As a matter of fact, we show that the fattening of the cross occurs
for kernels with sufficiently large mass near the origin,
while for kernels that are sufficiently weak near the origin
such a fattening phenomenon does not occur.

We also provide some further results in the case of the fractional
mean curvature flow, showing
that strictly starshaped sets in $\R^n$ have a unique geometric evolution.

Moreover, we exhibit two illustrative examples in $\R^2$ of closed nonregular curves,
the first with a Lipschitz-type singularity and the second with a cusp-type
singularity, given by two tangent circles of equal radius,
whose evolution develops fattening in the first case,
and is uniquely defined in the second, thus remarking
the high sensitivity of the fattening phenomenon
in terms of the regularity of the initial datum.
The latter example is in 
striking contrast to the classical case of the (local) curvature flow,
where two tangent circles always develop fattening.

As a byproduct of our analysis, we provide
also a simple proof of the fact that the cross in $\R^2$ is not
a $K$-minimal set for the nonlocal perimeter functional associated to $K$.
\end{abstract}

\tableofcontents

\section{Introduction}
In this paper we are interested in the analysis of 
the fattening phenomenon for evolutions of sets   according to nonlocal curvature flows. 
Fattening is a particular kind of singularity which arises in the evolution of boundaries by their (local or nonlocal) curvatures  
and more generally in geometric evolution of manifolds and is related to nonuniqueness of geometric solutions to the flow. 
Fattening phenomenon has been studied for mean curvature flow since long time
and a complete characterization of 
initial data which develop fattening is still missing. In the case of the plane, it is known that  smooth compact level curves   never develop an interior, due 
to a result by Grayson on the evolution of regular compact curves. This result is no more valid for fractional mean curvature flow in the plane, as proved recently in
\cite{MR3778164}. We recall
that examples of fattening of nonregular or noncompact curves in the plane 
for the mean curvature flow have been given in~\cites{MR1100206, MR1189906, bell},
where,
in particular, the fattening of the evolution starting from the cross is proved.  
Finally nonfattening for strictly starshaped initial data is proved in \cite{soner}, whereas
nonfattening of 
convex and mean convex initial data is proved in \cite{bss}, see also \cite{MR2208291} and  \cite{bcl}.
\medskip

In this paper we start the analysis of the fattening phenomenon (mostly in the plane)
for general nonlocal curvature flows. This problem has not yet been
considered in the literature apart from the result in~\cite{MR3713894} about
nonfattening for convex initial data under 
fractional mean curvature evolution in any space dimension.  

Here we will show that some results which are true for the mean curvature flow
are still valid, such as nonfattening for
regular initial data with positive curvature or
strictly starshaped initial data.

Nevertheless, in 
general,  some different behaviors with respect to the mean curvature flow arise, 
due to the fact that the fattening phenomenon is very sensitive to 
the strength of the nonlocal interactions. We discuss in particular
the evolution starting from the cross in the plane, which develops fattening only 
if the interactions are sufficiently strong. Moreover,
we show an example of a closed  curve with positive
curvature which fattens, and an example of a 
closed curve whose evolution by fractional mean curvature
flow does not present fattening, differently from the case of the
evolution by mean curvature flow. 
\medskip 

We now introduce the mathematical setting in which we work.
Given an initial set~$E_0\subset\R^n$, we 
define its evolution~$E_t$ for~$t>0$ according to
a nonlocal curvature flow as follows:
the velocity at a point~$x\in \partial E_t$ is given by
\begin{equation}\label{kflow} 
\partial_t x\cdot \nu=-H^K_{E_t}(x)
\end{equation}  
where $\nu$ is the outer normal at $\partial E_t$ in $x$.
The quantity~$H_E^K(x)$ is the
$K$-curvature of~$E$ at $x$, which is defined in the forthcoming formula~\eqref{CURVA}. 
More precisely, we take a function~$K:\R^n\setminus\{0\}\to[0,+\infty)$
which is a rotationally invariant kernel, 
namely
\begin{equation}\label{ROTAZION}
K(x)=K_0(|x|),
\end{equation}
for some~$K_0:(0,+\infty) \to[0,+\infty)$. We assume that
\begin{equation}\label{ROTAZION2}
\min\{1, |x|\}\, K(x) \in L^1(\R^n),\qquad \text{ i.e. }\int_0^1 \rho^{n}\,K_0(\rho)\,d\rho+\int_1^{+\infty} \rho^{n-1}\,K_0(\rho)\,d\rho
<+\infty.\end{equation}
Given~$E\subset\R^n$ and~$x\in\partial E$ we define  the $K$-curvature of~$E$
at~$x$, defined by
\begin{equation}\label{CURVA} H^K_E(x):=\lim_{\e\searrow0}
\int_{\R^n\setminus B_\e(x)}\Big(
\chi_{\R^n\setminus E}(y)-\chi_E(y)\Big)\,K(x-y)\,dy,\end{equation}
where, as usual,
$$ \chi_E(y):=\left\{
\begin{matrix}
1 & {\mbox{ if }}y\in E,
\\ 0 & {\mbox{ if }}y\not\in E.
\end{matrix}
\right. $$
We point out that~\eqref{ROTAZION} is a very mild integrability assumption,
compatible with the structure of nonlocal minimal surfaces (see e.g. condition~(1.5)
in \cite{csv}) and which fits the requirements in ~\cites{MR2487027, MR3401008}
in order to have existence and uniqueness for the
level set flow associated to~\eqref{kflow} (see Appendix~\ref{viscosec} for the details
about this matter).

Furthermore, when $K(x)= \frac{1}{|x|^{n+s}}$ for some $s\in (0,1)$, we will denote the 
$K$-curvature of a set~$E$ at a point~$x$ as~$H^s_E(x)$, and we indicate it as
the fractional mean curvature of the set~$E$ at~$x$.

While the setting in~\eqref{CURVA} makes clear sense for sets
with~$C^{1,1}$-boundaries,
as customary we also use the notion of $K$-curvatures for sets which
are locally the graphs of continuous functions: in this case, the $K$-curvature
may be also infinite and the definition is in the sense of viscosity
(see \cites{MR2487027, MR3401008} and Section~5 in~\cite{MR2675483}).  

We observe that 
the curvature defined in \eqref{CURVA}  is the the first variation of the following nonlocal perimeter functional, see \cites{cn, m}, 
\begin{equation}\label{per} \Per_K(E):=  \int_{E}\int_{\R^n\setminus E} K(x-y)\,dx\,dy, 
\end{equation} 
and so the geometric evolution law in~\eqref{kflow} can be interpreted as the $L^2$
gradient flow of this perimeter functional, 
as proved in~\cite{MR3401008}. 

The existence and uniqueness
of solutions for the $K$-curvature flow in~\eqref{kflow} in the
viscosity sense have been investigated 
in~\cite{MR2487027} by
introducing  the level set formulation of the geometric evolution problem~\eqref{kflow} and 
a proper notion of viscosity solution.
We refer to \cite{MR3401008}
for a general framework for the analysis via the level set formulation of a 
wide class of local and nonlocal translation-invariant geometric flows.

The level set flow associated to~\eqref{kflow} can be defined as follows. 
Given an initial set $E\subset \R^n$ and $C:=\partial E$,
we choose a bounded Lipschitz continuous function 
$u_E:\R^n\to \R$ such that 
\begin{eqnarray*}&&
C=\{x\in\R^n  \text{ s.t. } u_E(x)=0\}=\partial\{x\in\R^n\text{ s.t. }  u_E(x)\geq 0\}\\
{\mbox{and }} && E=\{x\in\R^n\text{ s.t. } u_E(x)\geq 0\}. 
\end{eqnarray*}
Let also~$u_E(x,t)$ be the viscosity solution of the following nonlocal parabolic problem
\begin{equation}\label{levelset}
\begin{cases}
\partial_t u(x,t)+|Du(x,t)| H^K_{\{y| u(y,t)\geq u(x,t)\}}(x)=0,\\
u(x,0)= u_E(x).
\end{cases} 
\end{equation} 
Then the level set flow of $C$ is given by 
\begin{equation}\label{sigmaet} \Sigma_E(t):=\{x\in\R^n\text{ s.t. }  u_E(x,t)=0\}. \end{equation} 
We associate to this level set the outer and inner flows defined as follows:
\begin{equation}
\label{outin} E^+(t):= \{x\in\R^n\text{ s.t. }  u_E(x,t)\geq 0\} \qquad
{\mbox{and}} \qquad E^-(t):= \{x\in\R^n\text{ s.t. }  u_E(x,t)>0\}.
\end{equation} 
We observe that the equation in \eqref{levelset} is geometric, so  
if we replace the initial condition with any function $u_0$ with the same level sets $\{u_0 \geq 0\}$ and $\{ u_0 > 0 \}$, the evolutions
$E^+(t)$ and $E^-(t)$ remain the same.
For more details, we refer to Appendix~\ref{viscosec}. 
\medskip

The $K$-curvature flow has been recently studied from different perspectives,
in particular the case fractional mean curvature flow,
taking into account geometric features such as conservation of the positivity of
the fractional mean curvature,
conservation of convexity and formation of neckpinch singularities,
see~\cites{SAEZ, MR3713894, MR3778164}. 

In this paper,  we analyze
the possible lack of uniqueness for the geometric evolution, i.e.
the situation in which~$\partial E^+(t)\neq \partial E^-(t)$, 
in terms of the fattening properties of the zero level set of the viscosity solutions.
To this end, we give the following definition:

\begin{definition}\label{fatdef} We say that
fattening occurs at time~$t>0$ if the set~$\Sigma_E(t)$, defined in~\eqref{sigmaet},
has nonempty interior, i.e. 
\[{\rm{int}}(E^+(t)\setminus E^-(t))\neq \varnothing.\]
\end{definition}   

We point out that in \cite[Section 6]{MR3713894},
in the case of fractional (anisotropic) mean  curvature flow in any dimension,
it has been proved that 
if the initial set~$E\subseteq \R^n$ is convex, then
the evolution remains convex for all~$t>0$ and~$E^+(t)=\overline{E^{-}(t)}$, so
fattening never occurs.

We start with a result
about nonfattening of bounded regular sets with positive $K$-curvature
(for the classical case of 
the mean curvature flow, see \cites{bss, MR2208291, bcl}).

\begin{theorem}\label{positive} Let ~\eqref{ROTAZION} and~\eqref{ROTAZION2} hold. 
Let $E\subset\R^n$ be a compact set of class $C^{1,1}$ and we assume that
there exists $\delta>0$ such that
\begin{equation}\label{g7g7fugvasfegdu18iqey20}
{\mbox{$H_E^K(x)\geq \delta$ for every $x\in \partial E$.}}\end{equation}
Then $\Sigma_{ E}(t)$ has empty interior for every $t$. 
\end{theorem}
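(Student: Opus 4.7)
The plan is to sandwich the level set evolution between two classical smooth flows obtained from small inner and outer parallel perturbations of $E$, and then exploit continuous dependence on the initial datum to force the ambiguity set $E^+(t)\setminus E^-(t)$ to collapse onto a single $C^{1,1}$ hypersurface, which has empty interior. This mirrors the classical argument for the local mean curvature flow (\cites{bss, MR2208291, bcl}), adapted to the nonlocal setting.

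The first step would be to produce a classical $C^{1,1}$ evolution $t\mapsto \widetilde E(t)$ of $E$ under~\eqref{kflow}, on a short time interval $[0,T^*)$, together with preservation of the lower bound $H^K_{\widetilde E(t)}\ge \delta/2$ along this smooth flow. Under the integrability condition~\eqref{ROTAZION2}, this is obtained by parametrizing~$\partial E$ as a normal graph over the initial boundary and solving the resulting nonlocal parabolic equation via a standard fixed point / linearization scheme (cf.~\cite{SAEZ} and the framework of~\cite{MR3401008}); the propagation of strict positivity of $H^K$ follows from the maximum principle applied to the evolution equation of the $K$-curvature along the flow, or equivalently by using small shrinking balls as inner barriers to rule out that $\min_{\partial\widetilde E(t)} H^K$ drops below $\delta/2$.

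For $\eta>0$ small I would then introduce the parallel perturbations
\[
E_\eta:=\bigl\{x\in E:\,\operatorname{dist}(x,\R^n\setminus E)\geq \eta\bigr\}\qquad\text{and}\qquad E^\eta:=E+\overline{B_\eta},
\]
which, by the $C^{1,1}$ regularity of $\partial E$, are themselves compact and of class $C^{1,1}$ with $E_\eta\subset\subset E\subset\subset E^\eta$. Since $H^K$ depends continuously on the set under $C^{1,1}$ normal parallel deformations (the difference between $H^K_E$ and $H^K_{E^\eta}$ being controlled, via~\eqref{CURVA}, by the symmetric difference, which is an $O(\eta)$ tubular strip), the bound~\eqref{g7g7fugvasfegdu18iqey20} persists on $\partial E_\eta$ and $\partial E^\eta$ for small~$\eta$, up to replacing $\delta$ by $\delta/2$. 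The first step then yields classical $C^{1,1}$ evolutions $\widetilde E_\eta(t)$ and $\widetilde E^\eta(t)$ on a common time interval, with the standard continuous dependence
\[
d_H\bigl(\widetilde E_\eta(t),\widetilde E(t)\bigr)+d_H\bigl(\widetilde E^\eta(t),\widetilde E(t)\bigr)\xrightarrow[\eta\searrow 0]{}0,
\]
locally uniformly in $t$. The comparison principle for viscosity solutions of~\eqref{levelset} (recalled in Appendix~\ref{viscosec}), together with the fact that classical flows with positive $K$-curvature are both viscosity sub- and supersolutions, gives
\[
\widetilde E_\eta(t)\,\subset\, E^-(t)\,\subset\, E^+(t)\,\subset\, \widetilde E^\eta(t)
\]
for every $t$ in the common interval of existence; combined with the Hausdorff convergence above this traps $E^+(t)\setminus E^-(t)$ in arbitrarily thin tubular neighbourhoods of the $C^{1,1}$ hypersurface $\partial\widetilde E(t)$, hence inside $\partial\widetilde E(t)$ itself, which has empty interior.

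To reach all times $t>0$, one iterates the argument starting from $\widetilde E(t_0)$ with $t_0$ close to the maximal classical existence time: preservation of positive $K$-curvature keeps the hypotheses valid at each restart, while a finite-time extinction $\widetilde E(T^\sharp)=\varnothing$ makes the statement trivial for $t\geq T^\sharp$ via the same barrier argument. The main obstacle lies entirely in the first step: for kernels satisfying only the weak condition~\eqref{ROTAZION2}, rigorously establishing short-time $C^{1,1}$ existence, continuous dependence in Hausdorff distance, and propagation of pointwise positivity of $H^K$ requires care, as most results in the literature are formulated for the fractional kernel; everything downstream, however, follows from these ingredients by the comparison/barrier scheme described above.
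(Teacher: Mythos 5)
Your high-level sandwich strategy (inner and outer comparison evolutions squeezing the ambiguity region $E^+(t)\setminus E^-(t)$) is in the right spirit, but the first step is a genuine gap, and you correctly flag it yourself at the end. For a kernel satisfying only \eqref{ROTAZION} and \eqref{ROTAZION2} there is no reason for the geometric evolution to be parabolic: $K_0$ may be bounded near the origin or even compactly supported, in which case $H^K$ is a zeroth-order integral operator and the level set equation \eqref{levelset} is essentially a first-order Hamilton--Jacobi equation with a bounded nonlocal source, with no smoothing whatsoever. In that generality neither short-time existence of a classical $C^{1,1}$ evolution, nor Hausdorff-continuous dependence of such a flow on its initial datum, nor a maximum-principle argument controlling $\min_{\partial\widetilde E(t)} H^K$ along the flow is available; the reference \cite{SAEZ} you lean on is specific to the fractional kernel, where the operator has positive order and the equation genuinely regularizes. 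Everything downstream in your plan (trapping $E^+(t)\setminus E^-(t)$ in $O(\eta)$ tubes around the smooth $\partial\widetilde E(t)$ via the $d_H$-convergence of the perturbed classical flows) collapses without these ingredients, so the argument does not close.

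The paper's proof reaches the same conclusion while bypassing classical existence entirely, and it is worth seeing why nothing beyond comparison is needed. The only analytic input is the continuity of $H^K$ under $C^{1,1}$ convergence of sets (proved in \cite{MR3401008}); combined with \eqref{g7g7fugvasfegdu18iqey20} and a compactness argument this yields a uniform lower bound $\bar\delta>0$ for $H^K_{E^\eta}$ over all parallel outer sets $E^\eta:=\{d_E\geq -\eta\}$ with $\eta\in[0,\eps]$, for $\eps$ small. Instead of a classical flow one then uses the \emph{explicit} barrier $C(t):=E^{\eps-(\bar\delta-h)t}$, which shrinks at constant normal speed $\bar\delta-h<\bar\delta$ and is therefore a strict geometric supersolution in the sense of Proposition~\ref{subgeometrico}. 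Comparison gives $E^+(s)\subseteq E$ with $d(E^+(s),E)\geq(\bar\delta-h)s$, and by the semigroup property and Corollary~\ref{cpgeometric} this propagates to $E^+(t+s)\subseteq E^-(t)$ with the same distance bound, for \emph{every} $t>0$ (no iteration, no preservation of positivity needed). Finally, in place of Hausdorff convergence of smooth flows, the empty-interior conclusion is obtained measure-theoretically from Proposition~\ref{scs}: the strict inclusion forces $|\mathrm{int}(E^+(t))\setminus\overline{E^-(t)}|=0$, and an open set of zero Lebesgue measure is empty. The moral is that constant-speed parallel barriers plus $C^{1,1}$-continuity of $H^K$ already do everything your classical flow $\widetilde E(t)$ was supposed to do; if you replace your first step by this construction and the Hausdorff-convergence step by the Fatou-type argument, the proof goes through.
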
 

We point out that, to get the result in Theorem~\ref{positive},
the assumption on the regularity of the sets cannot be completely dropped:  
indeed in the forthcoming Theorem~\ref{FAT3} we will provide an
example of bounded set in the plane, with a ``Lipschitz-type'' singularity and
with positive $K$-curvature, which develops fattening.

\subsection{Evolution of the cross} \label{EVOCRO:S}
We consider now the cross in  $\R^2$, i.e. 
\begin{equation}\label{CROSS}
{\mathcal{C}}:=\big\{ x=(x_1,x_2)\in\R^2 {\mbox{ s.t. }} |x_1|\geq |x_2|\big\}.\end{equation} 
It is well known, see \cite{MR1100206}, that the evolution of the cross according to the curvature flow immediately 
develops fattening for $t>0$. So, an interesting question is if the same phenomenon appears also for general nonlocal curvature flows as \eqref{kflow}, 
for kernels which satisfy ~\eqref{ROTAZION} and~\eqref{ROTAZION2}. 
We show that  actually
the fattening feature in nonlocal curvature flows is very sensible to the specific properties of the kernel
since it depends on the strength of the interactions: we identify in particular two classes of kernels, 
giving fattening of the cross  in the first class, i.e. for kernels which
satisfy \eqref{KERSI}, \eqref{UNIMPIU} below, and  nonfattening of the cross in
the second class, i.e. for kernels which satisfy \eqref{NOFHY} below.  
 
\begin{remark}\upshape\label{remcroce} 
Recalling the notation in \eqref{sigmaet}, we observe that 
\begin{equation}\label{9iwkdJSJJ293Si}
\big\{ x=(x_1,x_2)\in\R^2 {\mbox{ s.t. }} |x_1|= |x_2|\big\}\subseteq \Sigma_{\mathcal{C}}(t)\qquad
{\mbox{ for all }} t>0.\end{equation}
Indeed, up to a rotation of coordinate system, we write~$\mathcal{C}=\{(y_1,y_2)\in\R^2\text{ s.t. }y_1y_2\geq 0\}$. Define a bounded  Lipschitz function $u_0$ such that
$u_0(y_1,y_2)=u_0(-y_1,-y_2)=-u_0(-y_1,y_2)=-u_0(y_1,-y_2)$, 
and such that $\mathcal{C}= \{(y_1,y_2)\in\R^2\text{ s.t. }u_0(y_1,y_2)\geq 0\}$.
Then the solution to \eqref{levelset} with initial condition $u_0$ satisfies
$$u(y_1,y_2, t)=u(-y_1,-y_2,t)=-u(-y_1,y_2,t)=-u(y_1,-y_2,t),$$
see Appendix \ref{viscosec}.
In particular this implies that
$\big\{ (y_1,y_2)\in\R^2 {\mbox{ s.t. }} y_1y_2=0\big\}\subseteq \big\{ (y_1,y_2)\in\R^2 {\mbox{ s.t. }}u(y_1,y_2, t)=0\big\}=
 \Sigma_{\mathcal{C}}(t)$, that is~\eqref{9iwkdJSJJ293Si} once we rotate back. 
\end{remark}

We introduce the function 
\begin{equation}\label{defpsi} \Psi(r):=\int_{B_{r/4}(7r/4,0)} K(x)\,dx.\end{equation}
In our framework, the function~$\Psi(r)$ plays a crucial role in quantitative
$K$-curvature estimates. 
Notice that when~$K(x)=\frac{1}{|x|^{2+s}}$ with~$s\in(0,1)$, the function~$\Psi(r)$
reduces, up to multiplicative constants, to $\frac1{r^s}$.

We define, for any~$r>0$, the ``perturbed cross''
\begin{equation}\label{PCROSS}
{\mathcal{C}}_r := [-r,r]^2 \cup {\mathcal{C}}\subseteq \R^2.
\end{equation}
Then, we have:

\begin{proposition}\label{PROP}
Assume that~\eqref{ROTAZION} and~\eqref{ROTAZION2} hold true in $\R^2$.
Then,
we have that  
\begin{equation}\label{PRO1} H^K_{ {\mathcal{C}}_r }(p)\leq 0 \end{equation}
for any~$p\in\partial{ {\mathcal{C}}_r }$.
Also, for any~$t\in[-r,r]$,
\begin{equation}\label{PRO2} H^K_{ {\mathcal{C}}_r }(t,r)\le-2\Psi(r).\end{equation}
\end{proposition}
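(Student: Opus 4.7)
The plan is to exploit the symmetries of $K$ and of ${\mathcal{C}}_r$, treating the diagonal rays and the horizontal edges of $\partial{\mathcal{C}}_r$ separately. For a point $p$ on one of the four rays $\{(s,\pm s):|s|>r\}$ we also have $p\in\partial\mathcal{C}$, and the reflection across the line supporting that ray fixes $p$ while exchanging $\mathcal{C}$ with $\mathcal{C}^c$; since $K$ is radial, this forces $H^K_{\mathcal{C}}(p)=0$. Using the inclusion $\mathcal{C}\subseteq{\mathcal{C}}_r$ and the pointwise identity $\chi_{{\mathcal{C}}_r^c}-\chi_{{\mathcal{C}}_r}=(\chi_{\mathcal{C}^c}-\chi_{\mathcal{C}})-2\chi_{{\mathcal{C}}_r\setminus\mathcal{C}}$ then gives
\[H^K_{{\mathcal{C}}_r}(p)=H^K_{\mathcal{C}}(p)-2\int_{{\mathcal{C}}_r\setminus\mathcal{C}}K(p-y)\,dy\leq 0.\]

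For $p=(t,r)$ with $t\in[-r,r]$ on the top edge (the other edges being analogous by symmetry), I would compare to the half-space $H:=\{y_2\leq r\}$ via the reflection $\sigma(y_1,y_2):=(y_1,2r-y_2)$. Since $\sigma$ fixes $p$, preserves $K(p-\cdot)$, and swaps $H$ with $H^c$, we have $H^K_H(p)=0$, and accounting for the symmetric difference between ${\mathcal{C}}_r$ and $H$ yields
\[H^K_{{\mathcal{C}}_r}(p)=-2\left(\int_A K(p-y)\,dy-\int_B K(p-y)\,dy\right),\]
with $A:={\mathcal{C}}_r\setminus H=\{y_2>r,\,|y_1|\geq y_2\}$ the union of the two upper wedges $A_\pm$ of the cross and $B:=H\setminus{\mathcal{C}}_r=\{y_2<-r,\,|y_1|<-y_2\}$ the lower cone of its complement. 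The core step is a further isometric symmetrization: $\sigma(B)=\{y_2>3r,\,|y_1|<y_2-2r\}$, and the quarter-turn rotations $R_{\pm\pi/2,p}$ about $p$ (isometries preserving $K(p-\cdot)$) send the two halves of $\sigma(B)$, split by the sign of $y_1-t$, bijectively onto the sub-wedges $\{z\in A_-:z_1<t-2r\}$ and $\{z\in A_+:z_1>t+2r\}$, as one checks using the explicit rotation formulas. This reassembling gives the clean identity
\[\int_A K(p-y)\,dy-\int_B K(p-y)\,dy=\int_{E_+}K(p-y)\,dy+\int_{E_-}K(p-y)\,dy\geq 0,\]
where $E_+:=\{z\in A_+:z_1\leq t+2r\}$ and $E_-:=\{z\in A_-:z_1\geq t-2r\}$ are triangular excess regions, and proves \eqref{PRO1}.

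For the quantitative bound \eqref{PRO2}, by the additional symmetry $y_1\mapsto-y_1$ I may assume $t\in[0,r]$ and work with $E_+$, namely the right triangle with vertices $(r,r)$, $(t+2r,r)$, $(t+2r,t+2r)$ and hypotenuse $z_1=z_2$. I would then exhibit an explicit ball of radius $r/4$ contained in $E_+$ whose centre lies at distance exactly $7r/4$ from $p$: choosing $q:=(t+r\sqrt{3},\,r+r/4)$ yields $|q-p|^2=3r^2+r^2/16=(7r/4)^2$, and the containment $B_{r/4}(q)\subseteq E_+$ reduces to checking $q_1+r/4\leq t+2r$ (equivalently $\sqrt{3}\leq 7/4$) and $q_1-q_2\geq r\sqrt{2}/4$ (equivalently $t\geq r((5+\sqrt{2})/4-\sqrt{3})$), both of which are trivial for $t\in[0,r]$ since $(5+\sqrt{2})/4<\sqrt{3}$. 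The radial invariance of $K$ then gives
\[\int_{E_+}K(p-y)\,dy\geq\int_{B_{r/4}(q)}K(p-y)\,dy=\int_{B_{r/4}(q-p)}K(x)\,dx=\int_{B_{r/4}(7r/4,\,0)}K(x)\,dx=\Psi(r),\]
yielding \eqref{PRO2}. The main obstacle is the combined reflection-and-rotation identification of $B$ with proper subsets of $A_\pm$; once this identity is in place, both parts of the proposition follow by a direct geometric comparison.
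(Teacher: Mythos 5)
Your proposal is correct, and although it follows the same basic strategy as the paper's proof of Lemma~\ref{PC=2} (symmetrize across lines through the base point using the rotational invariance of $K$, then extract the quantitative term from a ball of radius $r/4$ at distance $7r/4$), the implementation is genuinely different. The paper splits off the lower triangle ${\mathcal{T}}_r=\big((-r,r)^2\setminus{\mathcal{C}}\big)\cap\{x_2<0\}$, proves $H^K_{{\mathcal{D}}_r}(t,r)\le 0$ for the remainder ${\mathcal{D}}_r={\mathcal{C}}_r\setminus{\mathcal{T}}_r$ by a chain of reflections (across the horizontal line through $(t,r)$, then across the diagonals $\ell$ and $\ell'$) ending in an inclusion, and obtains the $-2\Psi(r)$ from the subtracted triangle ${\mathcal{T}}_r$, where the existence of a ball of radius $r/4$ at distance exactly $7r/4$ from $(t,r)$ is left implicit. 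You instead compare with the half-plane $\{y_2\le r\}$ through $p=(t,r)$ and absorb the far cone $B$ exactly into the upper wedges $A_\pm$ via the horizontal reflection followed by the two quarter-turn rotations about $p$; I checked with the explicit rotation formulas that the images are precisely $\{z\in A_-:\ z_1<t-2r\}$ and $\{z\in A_+:\ z_1>t+2r\}$ for every $t\in[-r,r]$, so your clean identity $H^K_{{\mathcal{C}}_r}(t,r)=-2\int_{E_+\cup E_-}K\big((t,r)-y\big)\,dy$ holds and is in fact slightly sharper than the paper's inequality; your explicit center $q=(t+r\sqrt3,\,r+r/4)$ with $|q-p|=7r/4$ and $B_{r/4}(q)\subseteq E_+$ then gives \eqref{PRO2} exactly as claimed, and your treatment of the ray points (vanishing of $H^K_{\mathcal{C}}$ by reflection plus monotonicity under ${\mathcal{C}}\subseteq{\mathcal{C}}_r$) coincides with the paper's argument for \eqref{PRO1}. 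The only point worth spelling out is the corner case $t=\pm r$: there $\int_A K(p-y)\,dy$ (hence $\int_{E_+}$) may be infinite for strongly singular kernels, while $\int_B K(p-y)\,dy$ is finite by \eqref{ROTAZION2} since $B$ lies at distance at least $2r$ from $p$, so the truncated integrals still pass to the limit and your identity reads $H^K_{{\mathcal{C}}_r}(p)=-\infty\le-2\Psi(r)$; thus no actual gap arises.
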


Proposition~\ref{PROP} provides the cornerstone to detect the fattening
phenomenon of the $K$-curvature flow emanating from the cross, when the kernel $K$ satisfies 
\begin{equation}\label{KERSI}
\int_{0}^1 \frac{d\rho}{\Psi(\rho)}<+\infty. 
\end{equation} We will need also the following technical assumption: there exists $r_0>0$ such that  for all $r\in (0,r_0)$, 
\begin{equation}\label{UNIMPIU}
\inf_{p\in B_{3\sqrt{2}\,r}}\int_{B_{r/4}(3r/4,0)-p} K(x)\,dx >0.
\end{equation} This assumption is trivially satisfied if $K>0$ in $B_{(3\sqrt{2}+1)r_0}$. 

Indeed in this case, 
we have that, for short times,
the set~$\Sigma_{ {\mathcal{C}} }(t)$ contains a ball centered at the origin
(see\footnote{The pictures of this paper have just a
qualitative and exemplifying purpose, to favor the intuition and
make the reading simpler. They are sketchy,
not quantitatively accurate
and they are not the outcome of any rigorous simulation.}
Figure~\ref{PPP}), according to
the following result:

\begin{figure}
    \centering
    \includegraphics[width=8cm]{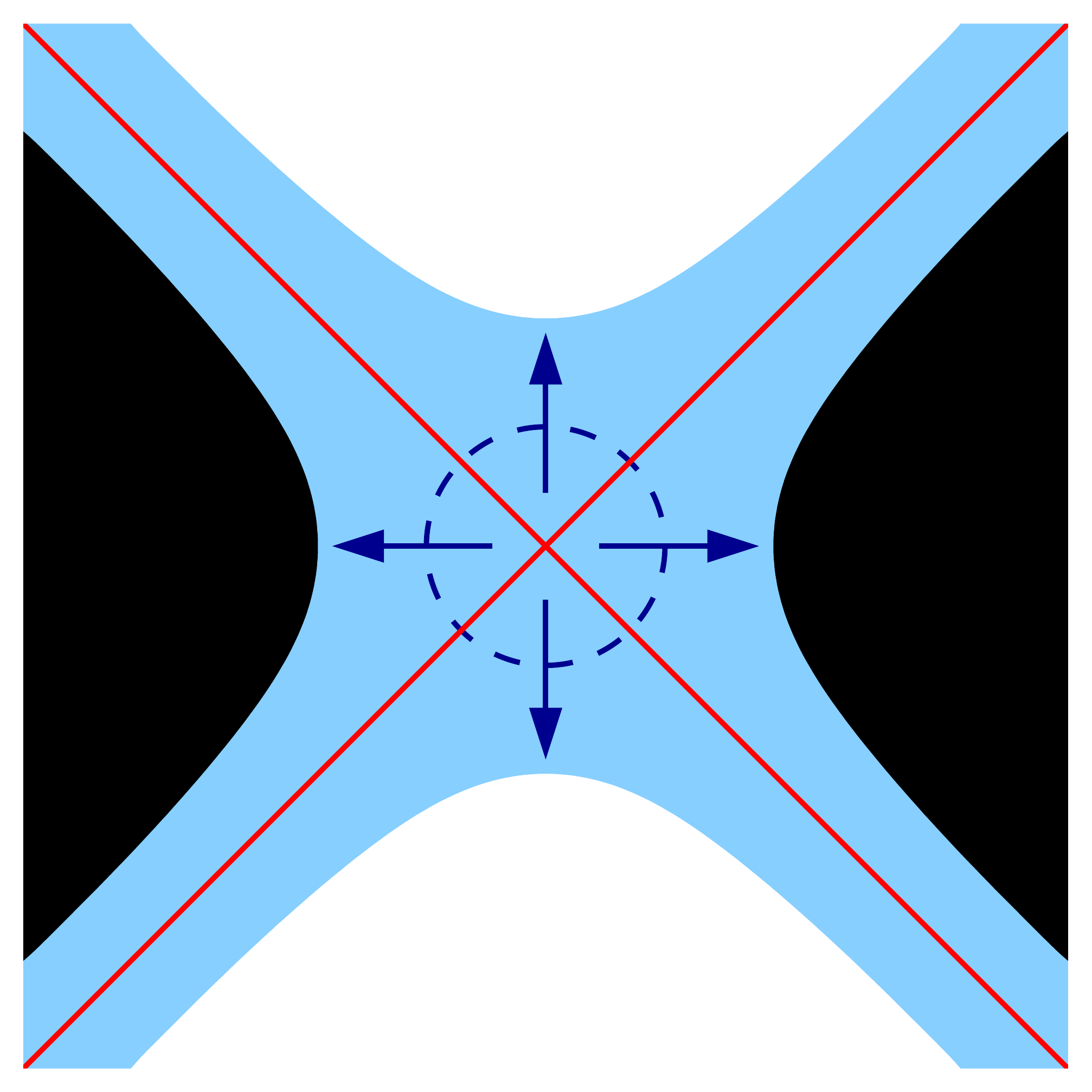}
    \caption{\em {{The fattening phenomenon described in Theorem~\ref{FAT1}.}}}
    \label{PPP}
\end{figure}

\begin{theorem}\label{FAT1}
Assume that~\eqref{ROTAZION}, ~\eqref{ROTAZION2},  \eqref{KERSI}  and \eqref{UNIMPIU} 
hold true. 
For~$r\in(0,1)$,   we define 
\begin{equation}\label{LAMBDA DEF} \Lambda(r):=\int_0^r \frac{d\rho}{\Psi(\rho)}.\end{equation}
Then, there exists $T>0$ such that 
\begin{equation} \label{9238475y6uedjidftgeri}
B_{r(t)}\subset \Sigma_{ {\mathcal{C}} }(t) \end{equation}
for any~$t\in(0,T)$, where~$r(t)$ is defined implicitly by
\begin{equation}\label{rdit}
\Lambda(r(t))=t.\end{equation}
\end{theorem}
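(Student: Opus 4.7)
The plan is to exhibit the family $\{\mathcal{C}_{r(t)}\}$ from \eqref{PCROSS} as an inner subbarrier for the level-set flow of the cross and to combine this with the reflection symmetries of $\mathcal{C}$. First, assumption \eqref{KERSI} ensures that the function $\Lambda$ from \eqref{LAMBDA DEF} is a strictly increasing continuous bijection of $[0,r_0]$ onto $[0,\Lambda(r_0)]$ with $\Lambda(0)=0$, so \eqref{rdit} defines a Lipschitz increasing function $r(\cdot)$ on $[0,T]$, with $T:=\Lambda(r_0)>0$, satisfying
\begin{equation*}
\dot r(t)=\Psi(r(t))\qquad\text{almost everywhere on }(0,T).
\end{equation*}
In particular $r(t)\to 0^+$ as $t\to 0^+$, and $\mathcal{C}_{r(0)}=\mathcal{C}$.

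I next verify that $t\mapsto \mathcal{C}_{r(t)}$ is a viscosity subsolution of the geometric flow~\eqref{kflow}. At a smooth boundary point $p\in \partial\mathcal{C}_{r(t)}$ lying in the interior of one of the four sides of the square $[-r(t),r(t)]^2$ --- say $p=(x_1,r(t))$ with $|x_1|<r(t)$ and outward normal $(0,1)$ --- the normal velocity of the prescribed motion is $\dot r(t)=\Psi(r(t))$, while \eqref{PRO2} gives $-H^K_{\mathcal{C}_{r(t)}}(p)\geq 2\Psi(r(t))$, so the subsolution condition $\partial_t x\cdot\nu\leq -H^K$ holds with a factor-of-two slack. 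At a point on one of the four open diagonal rays of $\partial\mathcal{C}_{r(t)}$ the boundary is instantaneously stationary in $t$, and the condition reduces to $0\leq -H^K_{\mathcal{C}_{r(t)}}(p)$, which is ensured by \eqref{PRO1}. The delicate case is the eight Lipschitz corners where a side of the square meets a diagonal ray: there one must verify the viscosity subsolution inequality against external $C^2$ test graphs in the nonlocal viscosity formalism of \cite{MR2487027,MR3401008}, and it is precisely here that I expect the technical hypothesis \eqref{UNIMPIU} to intervene, providing a uniform strict-negativity estimate $H^K_{\mathcal{C}_{r(t)}}(p)\leq -c<0$ in a neighborhood of each corner and thereby promoting \eqref{PRO1}--\eqref{PRO2} to a bona fide viscosity subbarrier.

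Once the subbarrier property is in place, the viscosity comparison principle of \cite{MR2487027,MR3401008} applied to the initial identity $\mathcal{C}_{r(0)}=\mathcal{C}=E^+(0)$ yields $\mathcal{C}_{r(t)}\subseteq E^+(t)$ for every $t\in(0,T)$. Let $S\colon(x_1,x_2)\mapsto(x_2,x_1)$ denote the reflection across the line $x_1=x_2$: since $K$ is rotationally invariant it is also $S$-invariant, so the parity property of $u_{\mathcal{C}}$ recorded in Remark \ref{remcroce} gives $u_{\mathcal{C}}(Sx,t)=-u_{\mathcal{C}}(x,t)$, and therefore $S\,E^+(t)=\{u_{\mathcal{C}}(\cdot,t)\leq 0\}$. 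Applying $S$ to the previous inclusion and using that the square is $S$-invariant (so $B_{r(t)}\subseteq[-r(t),r(t)]^2\subseteq \mathcal{C}_{r(t)}\cap S\mathcal{C}_{r(t)}$), we conclude
\begin{equation*}
B_{r(t)}\subseteq E^+(t)\cap\{u_{\mathcal{C}}(\cdot,t)\leq 0\}=\{u_{\mathcal{C}}(\cdot,t)=0\}=\Sigma_{\mathcal{C}}(t),
\end{equation*}
which is \eqref{9238475y6uedjidftgeri}. The main obstacle in this plan is the corner verification of the viscosity subsolution property in the second paragraph: that is the only step where I expect the full strength of \eqref{UNIMPIU} to be genuinely needed, all other steps being essentially a comparison argument fed by Proposition~\ref{PROP} and a symmetrization.
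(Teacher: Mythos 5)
Your overall strategy is the same as the paper's: use the perturbed crosses $\mathcal{C}_r$ of~\eqref{PCROSS}, the curvature estimates of Proposition~\ref{PROP}, and a geometric comparison principle to propagate an inner barrier governed by the ODE $\dot r=\Psi(r)$. Your reflection trick at the end (using $S(x_1,x_2)=(x_2,x_1)$ and the antisymmetry of Remark~\ref{remcroce} to trap $B_{r(t)}$ between $E^+(t)$ and its image) is a clean and legitimate substitute for the paper's explicit construction of the complementary barrier $\mathcal{C}^{r(t)}$, and it avoids duplicating the subsolution verification.

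However, there is a genuine gap in the comparison step. You propose to compare the family $t\mapsto\mathcal{C}_{r(t)}$, starting from $\mathcal{C}_{r(0)}=\mathcal{C}$, directly against the level-set flow of $\mathcal{C}$. This cannot be made to fit the comparison lemmas you invoke (Proposition~\ref{subgeometrico} and Corollary~\ref{unboundedcoro}) for two reasons. First, those results require the barrier family to consist of sets with Lipschitz boundaries, piecewise $C^{1,1}$ outside finitely many angular points (in the sense of the footnote, i.e.\ two distinct tangent directions), and to satisfy the strict subsolution inequality~\eqref{subgeo} with a gap $\delta>0$ \emph{uniform in $t$}. At $t=0$ the set $\mathcal{C}_{r(0)}=\mathcal{C}$ has a degenerate crossing of two lines at the origin, which is not an angular point of the admissible type, so the family is not admissible on $[0,T)$. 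Second, even ignoring $t=0$, the uniform gap $\delta>0$ fails: the gap you obtain on the sides is $\Psi(r(t))$ and the gap near the corners comes from the infimum in~\eqref{UNIMPIU}, but both of these are controlled only for $r$ bounded away from $0$, and $r(t)\to 0$ as $t\to 0^+$; neither~\eqref{KERSI} nor~\eqref{UNIMPIU} gives a positive lower bound as $r\searrow 0$. You also cannot sidestep this by running the comparison on $[\eps,T]$ and letting $\eps\to0$, since you would need to know $\mathcal{C}_{r(\eps)}\subseteq E^+(\eps)$ at the starting time, which is exactly what is to be proved. The paper's fix is to fix $r>0$, solve $\dot r_*=\Psi(r_*)$ with $r_*(0)=r$ (so $r_*(t)\ge r>0$ on $[0,T]$ and the $\delta$ in Corollary~\ref{unboundedcoro} is genuinely positive), compare the barrier $\mathcal{C}_{r_*(t)}$ with the flow of $\mathcal{C}_r$ to get $\mathcal{C}_{r_*(t)}\subseteq\mathcal{C}_r^+(t)$, observe $r_*(t)\ge r(t)$, and finally pass to the limit $r\searrow0$ via the intersection identity $\mathcal{C}^+(t)=\bigcap_{r>0}\mathcal{C}_r^+(t)$, which follows from Theorem~\ref{comparison} because $d_{\mathcal{C}}\le d_{\mathcal{C}_r}\le d_{\mathcal{C}}+r$. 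This double-limit structure, approximating both the barrier and the evolving set before comparing, is the missing ingredient in your argument.
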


We notice that the setting in~\eqref{LAMBDA DEF} is well defined
in view of the structural assumption in~\eqref{KERSI}
and~$\Lambda(r)$, as defined in~\eqref{LAMBDA DEF}, is strictly increasing,
which makes the implicit definition in~\eqref{rdit} well posed.

\begin{remark}\upshape 
We  point out that the structural assumptions in~\eqref{ROTAZION2}
and~\eqref{KERSI}
are satisfied by kernels of the form~$K(x)=
\frac{1}{|x|^{2+s}}$ for some $s\in (0,1)$, 
or more generally by kernels such that 
\begin{equation}\label{0r438HA340IASJdfjggtt0454} K\in L^1(\R^2\setminus B_1)\qquad \text{ and }\qquad
 \frac{1}{C\,|x|^\alpha}\leq K(x)\leq \frac{C}{|x|^\beta},\quad
{\mbox{ with~$\alpha>1$, $\beta<3$, $C\ge1$, for any $x\in B_1$}}.
\end{equation}
Indeed, the upper bound for~$K$ in~\eqref{0r438HA340IASJdfjggtt0454}
plainly implies \eqref{ROTAZION2}. 
Moreover,
the lower bound for~$K$ in~\eqref{0r438HA340IASJdfjggtt0454}
 implies that
\[ \Psi(r)=\int_{B_{r/4}(7r/4,0)} K(x)\,dx\geq  \int_{B_{r/4}(7r/4,0)}
\frac{1}{|x|^\alpha}\,dx\geq \frac{1}{(2r)^{\alpha}}|B_{r/4}|=C_0\, r^{2-\alpha}\]
where $C_0>0$ is independent of $r$, and this yields~\eqref{KERSI}. 
Finally as for \eqref{UNIMPIU}, we observe that it is trivially satisfied.

Note that $r(t)$ defined in  \eqref{rdit} satisfies $r(t)\geq C_0 t^{\frac{1}{\alpha-1}}$, in particular, 
in the case $K(x)=\frac{1}{|x|^{2+s}}$,   $r(t)$ is proportional
to~$t^{\frac1{1+s}}$. \end{remark} 
\medskip

As a counterpart of Theorem~\ref{FAT1}, we show that the fattening phenomenon does
not occur in straight crosses when the interaction kernel has sufficiently
strong integrability properties. Namely, we have that:

\begin{theorem}\label{NOFAT}
Assume~\eqref{ROTAZION} and~\eqref{ROTAZION2}. Suppose also that
\begin{equation}\label{NOFHY}
\begin{split}&
{\mbox{$K_0\le K_1$, with~$K_1$ nonincreasing
and}}
\\&
\Phi(r):=\int_{ [-r,r]\times\R} K_1(|x|)\,dx<+\infty,
\end{split}
\end{equation}
for any~$r>0$, and that 
\begin{equation}\label{HIG}
\lim_{\delta\searrow0} \int_{\delta}^{1} \frac{d\tau}{\Phi(\tau)}=+\infty.
\end{equation}
Then
\begin{equation}\label{8G}
{\mbox{the evolution of~${\mathcal{C}}$ under the $K$-curvature flow
coincides with~${\mathcal{C}}$ itself.}}
\end{equation}
\end{theorem}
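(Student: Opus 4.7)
The plan is to prove nonfattening by constructing outer barriers for the flow whose evolution is controlled by an ODE that, thanks to condition~\eqref{HIG}, never degenerates in finite time, and then to pass to a limit.

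\textbf{Symmetry reduction.} As in Remark~\ref{remcroce}, the dihedral symmetries of~$\mathcal{C}$ together with the $90^\circ$-rotation $R$ that interchanges~$\mathcal{C}$ with the closure of its complement are preserved by the level-set flow, thanks to the rotational invariance of~$K$. This lets me reduce the statement to $E^+_{\mathcal{C}}(t)\subseteq \mathcal{C}$ for every~$t>0$: the reverse inclusion $E^-_{\mathcal{C}}(t)\supseteq \mathrm{int}(\mathcal{C})$ then follows by complementation through $R$.

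\textbf{Curvature estimate via~$\Phi$.} The analytical heart of the argument is a bound on the $K$-curvature of a thin strip of width~$2\delta$ in terms of $\Phi$. Writing $S_\delta=\mathbb{R}\times[-\delta,\delta]$, I would use the dominance $K_0\le K_1$, the monotonicity and evenness of $K_1$, and the definition of $\Phi$ in~\eqref{NOFHY} to show an estimate of the form
\[
|H^K_{S_\delta}(p)|\le C\,\Phi(\delta)\qquad\text{for all }p\in\partial S_\delta,
\]
with $C$ independent of~$\delta$. By rotational invariance of~$K$, the same bound then holds for strips of width $2\delta$ tilted in any direction, in particular along the two diagonals of~$\mathcal{C}$.

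\textbf{Outer barriers and ODE comparison.} I would next introduce a one-parameter family of outer barriers $B_{\delta_0}\supsetneq\mathcal{C}$, such as the widened cross $B_{\delta_0}=\{\,x\in\R^2:|x_2|\le|x_1|+\delta_0\,\}$ (or a version with rounded corners at $(0,\pm\delta_0)$), whose boundary away from the corners is contained in strip-like neighborhoods of the diagonals. Combining the strip curvature estimate with the comparison principle for~\eqref{levelset} (Appendix~\ref{viscosec}), one controls the evolved barrier by a set of the form $B_{\delta(t)}$, where $\delta(t)$ satisfies an inequality
\[
\delta'(t)\ge -C\,\Phi(\delta(t)),\qquad \delta(0)=\delta_0.
\]
Assumption~\eqref{HIG} gives $\int_{0}^{\delta_0}d\tau/\Phi(\tau)=+\infty$, hence $\delta(t)>0$ for all~$t\ge 0$; on the other hand $\delta(t)\le \delta_0\to 0$ as $\delta_0\to 0^+$. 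By comparison $E^+_{\mathcal{C}}(t)\subseteq B_{\delta(t)}$ for every $\delta_0>0$, and letting $\delta_0\to 0^+$ yields $E^+_{\mathcal{C}}(t)\subseteq \mathcal{C}$, which combined with the symmetry reduction gives~\eqref{8G}.

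\textbf{Main obstacle.} The subtlest part will be the barrier construction itself. At the two corners $(0,\pm\delta_0)$ of $B_{\delta_0}$ the two adjacent boundary rays interact, and the strip curvature estimate does not apply directly; one must verify that the corners do not push outward at a rate incompatible with the global ODE, most likely by rounding the corners at an auxiliary scale and bounding the correction. A secondary difficulty is that $\partial B_{\delta_0}$ consists of \emph{half}-infinite rays, so the infinite-strip estimate above has to be corrected by a tail term near the corner, controlled via the integrability hypothesis~\eqref{ROTAZION2}. Closing these two gaps (corner regularization plus tail correction) is where the bulk of the technical work will lie.
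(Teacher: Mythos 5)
Your high-level strategy is the right one and is essentially dual to the paper's: the paper builds \emph{inner} barriers $\mathcal{M}_{r}$ (dilations of the quarter-plane pair~$\mathcal{N}_r$) that expand toward~$\mathrm{int}(\mathcal{C})$ while you build \emph{outer} barriers (widened crosses) that collapse onto~$\mathcal{C}$, and in both cases the key is an ODE for the barrier parameter driven by~$\Phi$, so that~\eqref{HIG} forbids degeneration in finite time. The symmetry reduction and the final limiting argument are sound, and the barrier family you name (the widened cross $B_{\delta_0}=\{|x_2|\le|x_1|+\delta_0\}$) is, up to a rotation, exactly the complement of the paper's~$\mathcal{N}_r$.

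However, the proposed key lemma is false, and this is not a corner issue that can be repaired by rounding: the claimed estimate $|H^K_{S_\delta}(p)|\le C\,\Phi(\delta)$ for the infinite strip $S_\delta=\R\times[-\delta,\delta]$ has the wrong scaling. Reflecting across the tangent line at $p=(0,\delta)$ one finds, exactly,
\[
H^K_{S_\delta}(p)=2\int_{\{z_2>2\delta\}}K(z)\,dz,
\]
which tends to $\|K\|_{L^1(\R^2)}$ as $\delta\searrow 0$ (and the hypotheses~\eqref{NOFHY} force $K\in L^1_{\mathrm{loc}}$, so this limit is a finite, strictly positive constant), whereas $\Phi(\delta)\to0$. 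So the strip curvature does \emph{not} vanish; the strip shrinks at a rate bounded away from zero, which would give fattening, not prevent it. What makes the estimate of order $\Phi$ possible in the paper is not that the boundary lies in a thin strip, but that the cross has zero curvature: the paper reflects the complement of~$\mathcal{N}_r$ across the tangent line \emph{and uses the angular structure of the quarter-plane pair} so that everything cancels except a union of two shifted slabs of width $\sim r$ at positive distance from~$p$ (the ``white region'' $T$ in their Lemma), and only that residual region is controlled by~$\Phi(2r)$. Your widened cross enjoys the same cancellation, since $\mathcal{C}+(0,\delta_0)\subset B_{\delta_0}$ and the difference is a thin band along the \emph{other} diagonal, but you need to exploit this cross-specific reflection structure directly; a generic strip estimate cannot supply it. As a secondary remark, the sign in your ODE should be checked: comparison with $\mathcal{C}+(0,\delta_0)$ gives $H^K_{B_{\delta_0}}(p)\le0$, so the widened cross \emph{expands} rather than shrinks, and the barrier width satisfies $\delta'(t)\le C\Phi(\delta(t))$; letting $\delta_0\searrow0$ still forces $\delta(t)\to0$ by~\eqref{HIG}, so the conclusion survives, but the intuition in the proposal had the outer barrier collapsing inward, which is not what happens.
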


\begin{remark} \upshape We notice that conditions~ \eqref{ROTAZION2}, \eqref{NOFHY}
and~\eqref{HIG} are satisfied by kernels $K$ such that  $K_0$
is nonincreasing, and which satisfy  
\begin{equation}\label{23456780r438HA340IASJdfjggtt0454}
K\in L^1(\R^2\setminus B_1)\qquad \text{ and }\qquad K(x)\leq \frac{C}{|x|^\alpha},\quad
{\mbox{ with~$\alpha\in(0,1]$, $C>0$, for any $x\in B_1$}}.
\end{equation}
Indeed,
we observe first that  in this case  \eqref{ROTAZION2} is automatically satisfied. 
Moreover,  from \eqref{23456780r438HA340IASJdfjggtt0454},
we can take~$K_1:=K_0$ in \eqref{NOFHY} and have that
\begin{eqnarray*}
\Phi(r)&=&\int_{ [-r,r]\times \R} K_0(|x|)\,dx
\\ &\le&
\int_{ B_r} \frac{C}{|x|^\alpha}\,dx+ \int_{[-r,r]^2\setminus B_r} K_0(|x|)\,dx
+\int_{ [-r,r]\times ((-\infty,-r]\cup[r,+\infty))} K_0(|x|)\,dx
\\&
\leq& C r^{2-\alpha} +4 r\int_r^{+\infty} K_0(x_2)\,dx_2 
\\& \leq& C r^{2-\alpha} +C r\left(\int_r^1 \frac{dx_2}{x_2^\alpha} +1\right)\\
&\le& Cr|\log r|,
\end{eqnarray*} 
up to renaming~$C>0$, and
so~\eqref{HIG} is satisfied. 

We also observe that
condition~\eqref{23456780r438HA340IASJdfjggtt0454} is somewhat
complementary to~\eqref{0r438HA340IASJdfjggtt0454}.
\end{remark} 
\medskip 

\subsection{A remark on $K$-minimal cones} 
As a byproduct of the results that we discussed in Subsection~\ref{EVOCRO:S},
we observe that actually the cross is not a $K$-minimal set for the
$K$-perimeter in $\R^2$, obtaining an alternative (and more general)
proof of a result 
discussed in Proposition~5.2.3
of~\cite{MR3469920} for the fractional perimeter
(see~\cite{MR3090533} for a full regularity theory
of fractional minimal cones in the plane).

For this, we define
\begin{equation}\label{locper}\Per_K(E, B_R)
:=  \int_{E\cap B_R}\int_{\R^2\setminus E} K(x-y)\,dx\,dy +
\int_{E\setminus B_R }\int_{B_R\setminus E} K(x-y)\,dx\,dy.
 \end{equation} 
Then,
we 
say that $E$ is a minimizer for $\Per_K$ in the ball $B_R$ if
\[\Per_K(E, B_R)\leq \Per_K(F,B_R)\] for every measurable set~$F$ such 
that $E\setminus B_R=F\setminus B_R$.

Also, a measurable set $E\subset\R^2$ is said to be
$K$-minimal for the $K$-perimeter if it is a  minimizer for $\Per_K$
in every ball $B_R$.  Then, we have:

\begin{proposition}\label{mincroce}
Let~\eqref{ROTAZION} and
\eqref{ROTAZION2}
hold, and
assume that $K$ is not identically zero.
Then $\mathcal{C}\subseteq \R^2$, as defined in \eqref{CROSS}, is not $K$-minimal for the $K$-perimeter.
\end{proposition}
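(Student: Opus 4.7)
The plan is to show that, for a suitable choice of $r>0$ and $R>\sqrt{2}\,r$, the perturbed cross $\mathcal{C}_r$ of \eqref{PCROSS} is a strictly better competitor than $\mathcal{C}$ for $\Per_K(\cdot,B_R)$; this contradicts $K$-minimality and proves the claim. Setting $D_r:=\mathcal{C}_r\setminus\mathcal{C}$ (the union of the two triangles $\{|x_1|<|x_2|\leq r\}$ inside $[-r,r]^2$) and using that $\mathcal{C}_r$ agrees with $\mathcal{C}$ outside $B_R$, a short manipulation of \eqref{locper} yields
\[
\Per_K(\mathcal{C}_r,B_R)-\Per_K(\mathcal{C},B_R)=\int_{D_r}\!dy\int_{\R^2}\bigl[\chi_{\mathcal{C}_r^c}(x)-\chi_\mathcal{C}(x)\bigr]K(y-x)\,dx.
\]
Slicing $D_r$ by $|y_2|=s\in(0,r)$ and using the reflection $y_2\mapsto-y_2$ reduces this to $2\int_0^r\!ds\int_{-s}^{s}F(t,s)\,dt$, where $F(t,s)$ is the inner integral at $y=(t,s)$.

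The key pointwise identity
\[
\chi_{\mathcal{C}_r^c}-\chi_\mathcal{C}=\bigl(\chi_{\mathcal{C}_s^c}-\chi_{\mathcal{C}_s}\bigr)+\bigl(\chi_{D_s}-\chi_{D_r\setminus D_s}\bigr),
\]
which one verifies by inspecting the regions $\mathcal{C}$, $\mathcal{C}_r^c$, $D_s$, $D_r\setminus D_s$, splits $F(t,s)$ as $H^K_{\mathcal{C}_s}(t,s)+\tilde F(t,s)$, the first summand being the $K$-curvature of $\mathcal{C}_s$ at the top-edge point $(t,s)$ and
\[
\tilde F(t,s):=\mathrm{p.v.}\!\int\bigl[\chi_{D_s}(x)-\chi_{D_r\setminus D_s}(x)\bigr]K((t,s)-x)\,dx.
\]
Proposition~\ref{PROP}, applied with $s$ in place of $r$, gives $H^K_{\mathcal{C}_s}(t,s)\leq-2\Psi(s)$ for every $|t|\leq s$, which integrated produces the main estimate $-8\int_0^r s\,\Psi(s)\,ds$.

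It remains to show that the correction $\int_0^r\!ds\int_{-s}^{s}\tilde F(t,s)\,dt$ vanishes. To this end, I would parametrize the two triangles of $D_r$ respectively by $x=(v,u)$ and $x=(v,-u)$ with $u\in(0,r)$, $|v|<u$; on these slices $\chi_{D_s}-\chi_{D_r\setminus D_s}=\mathrm{sign}(s-u)$, while the kernel factor is $K((t-v,s-u))$ on the top triangle and $K((t-v,s+u))$ on the bottom. The crucial observation is that swapping $(t,s)\leftrightarrow(v,u)$ leaves the symmetric domain $\{s,u\in(0,r),\,|t|<s,\,|v|<u\}$ invariant and, by evenness of $K$, preserves the kernel factor, while flipping $\mathrm{sign}(s-u)$: each of the two integrals therefore equals its own negative and vanishes. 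Combining, one has $\Per_K(\mathcal{C}_r,B_R)-\Per_K(\mathcal{C},B_R)\leq-8\int_0^r s\,\Psi(s)\,ds$, which is strictly negative as soon as $\int_0^r s\,\Psi(s)\,ds>0$; this last condition is satisfied for $r$ large enough because the assumption $K\not\equiv 0$ forces $\Psi$ to be positive on an open subinterval of $(0,\infty)$.

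The main obstacle is the rigorous handling of the principal-value integrals composing $\tilde F(t,s)$. The individual pieces $\int_{D_s}K((t,s)-x)\,dx$ and $\int_{D_r\setminus D_s}K((t,s)-x)\,dx$ generally diverge because $(t,s)$ lies on their common boundary while $K$ is singular at the origin; nevertheless their difference converges as a PV via the local reflection symmetry $x_2\mapsto 2s-x_2$ across that boundary. Correspondingly, the Fubini-style swap $(t,s)\leftrightarrow(v,u)$ must be implemented on the $\varepsilon$-truncations $\R^2\setminus B_\varepsilon((t,s))$, with the limit $\varepsilon\to 0^+$ taken at the end; the integrability hypothesis \eqref{ROTAZION2} together with the said symmetry ensures that this limiting procedure is legitimate.
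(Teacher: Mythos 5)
Your proposal is correct and follows essentially the same approach as the paper: you compare $\Per_K(\mathcal{C}_r,B_R)$ with $\Per_K(\mathcal{C},B_R)$ via the perturbed cross, express the difference as an integral over $D_r=W_r$, recognize the $K$-curvature of $\mathcal{C}_s$ at $(t,s)$ and invoke Lemma~\ref{PC=2} to estimate it by $-2\Psi(s)$, show the remaining cross-term vanishes by a symmetry (swap) argument, and finish by observing that $K\not\equiv0$ forces $\Psi$ to be positive on an open interval. The only substantive difference is bookkeeping: you handle the singular kernel by $\varepsilon$-truncations around the base point when performing the Fubini swap, whereas the paper replaces $K$ by the truncated kernel $K_\delta=K(1-\chi_{B_\delta})$ throughout, carries out the estimate with absolutely convergent integrals, and lets $\delta\searrow0$ at the very end via dominated convergence --- a somewhat cleaner route to the same rigor you flag as the remaining obstacle.
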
 

\medskip
 \subsection{Fractional curvature evolution of starshaped sets} 
 
Now we   restrict ourselves to the case of homogeneous kernels $K$, i.e. we consider the case  (up to multiplicative constants) in which
\begin{equation}\label{NUCLEONORMA}
K_0(r)=\frac{1}{r^{n+s}}, \qquad{\mbox{with }}s\in(0,1).
\end{equation}

We start by observing that strictly starshaped sets never fattens, similarly as for the (local) curvature flow (see \cite{soner}).
A similar result has also been observed in \cite[Remark 6.4]{MR3713894}. 

\begin{proposition}\label{strictstar} Assume \eqref{NUCLEONORMA}. 
Let $\S^{n-1}=\{\omega\in \R^n{\mbox{ s.t. }} |\omega|=1\}$, $
f:\S^{n-1}\to (0, +\infty)$
be a continuous positive function and  $E\subset \R^n$ be such that  
\begin{equation}\label{strictstar1}
 E=\{0\}\cup \left\{x\in\R^n\text{ s.t. }   x\neq 0,  |x|\leq f\left(\frac{x}{|x|}\right)\right\}.
\end{equation}
Then, the set~$\Sigma_{E}(t)$ has empty interior for all $t>0$.
\end{proposition}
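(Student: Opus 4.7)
My plan is to exploit the scaling invariance of the homogeneous kernel $K_0(r)=r^{-n-s}$, the strong comparison principle for the level-set flow, and a viscosity argument at the flat zero set of $u_E$.

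\emph{Step 1 (scaling).} Since $K_0$ is homogeneous of degree $-(n+s)$, a direct change of variables gives the identity $H^K_{\lambda F}(\lambda x)=\lambda^{-s}H^K_F(x)$, so the level-set equation~\eqref{levelset} is invariant under the parabolic rescaling $(x,t)\mapsto(\lambda x,\lambda^{1+s}t)$. By the uniqueness of the viscosity solution, this yields
\[
(\lambda E)^{\pm}(t)=\lambda\,E^{\pm}\!\bigl(\lambda^{-(1+s)}t\bigr),\qquad
\Sigma_{\lambda E}(t)=\lambda\,\Sigma_E\!\bigl(\lambda^{-(1+s)}t\bigr).
\]

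\emph{Step 2 (strict comparison).} The strict starshapedness \eqref{strictstar1} provides, for every $\mu\in(0,1)$, the quantitative strict inclusion
\[
\mu E\subset\mathrm{int}(E),\qquad d(\mu E,\R^n\setminus E)\ge(1-\mu)\min_{\S^{n-1}}f>0.
\]
The strong comparison principle for the nonlocal level-set flow (Appendix~\ref{viscosec} and~\cites{MR2487027,MR3401008}) preserves this strict ordering in time, so $(\mu E)^{+}(t)\subset E^{-}(t)$ for every $t\ge 0$. Combining with Step~1, we obtain the two dual estimates
\[
\mu\,E^{+}\!\bigl(\mu^{-(1+s)}t\bigr)\,\subset\,E^{-}(t)\ \ \forall\,\mu\in(0,1),\qquad
E^{+}(t)\,\subset\,\lambda\,E^{-}\!\bigl(\lambda^{-(1+s)}t\bigr)\ \ \forall\,\lambda>1.
\]

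\emph{Step 3 (ruling out fattening).} Suppose by contradiction that some $B_r(x_0)\subset\Sigma_E(t_0)$ is a nonempty open ball. Since $0\in\mathrm{int}(E^+(t_0))$ for small $t_0$ by continuity, we may assume $x_0\ne 0$. For any $y\in B_{r/2}(x_0)$, picking $\lambda>1$ close enough to $1$ (so that $\lambda y\in B_r(x_0)\subset E^{+}(t_0)$) and applying the second estimate of Step~2 to $z=\lambda y$ gives $y\in E^{-}(\lambda^{-(1+s)}t_0)$, i.e.\ $u_E(y,\tau)>0$ for a sequence $\tau=\lambda^{-(1+s)}t_0\to t_0^-$. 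Dually, for $\mu<1$ close to $1$ with $\mu y\in B_r(x_0)$, the first estimate of Step~2 gives $u_E(y,\tau)<0$ for a sequence $\tau\to t_0^+$. Hence $u_E(y,\cdot)$ crosses zero at $t_0$ on every $y\in B_{r/2}(x_0)$. Since $u_E\equiv 0$ on $B_r(x_0)$ at $t_0$, we have $Du_E(y,t_0)=0$ and the set $\{u_E(\cdot,t_0)\ge 0\}$ has $y$ in its interior, so $H^K_{\{u_E(\cdot,t_0)\ge 0\}}(y)$ is finite. Testing the viscosity supersolution inequality at $(y,t_0)$ against the test function $\psi(x,t)=-\varepsilon(t-t_0)-C|x-y|^{2}$ (with $\varepsilon>0$, $C>0$) then gives $-\varepsilon+0\cdot H^K(\cdot)\ge 0$, contradicting $\varepsilon>0$.

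The principal obstacle is the finishing viscosity step: one has to ensure that the test function $\psi$ genuinely touches $u_E$ from below near $(y,t_0)$, which requires a quantitative (linear) rate of decrease of $u_E(y,\cdot)$ across $t_0$. This rate must be extracted from the positive margin $(1-\mu)\min f$ of Step~2 together with the Lipschitz continuity in time of the viscosity level-set solution (Appendix~\ref{viscosec}), by propagating the spatial margin to a quantitative positivity of $u_E$ just before $t_0$ on $B_{r/2}(x_0)$. This quantitative adaptation of the contradiction is the nonlocal analogue of the argument in~\cite[Remark~6.4]{MR3713894} for convex initial data under the fractional curvature flow, and is the only nontrivial step in the plan.
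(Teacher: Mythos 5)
Your Steps 1 and 2 reproduce exactly the mechanism the paper uses: the scaling identity $(\lambda E)^\pm(t)=\lambda E^\pm(\lambda^{-(1+s)}t)$ (this is Lemma~\ref{A-102-B}) combined with the quantitative strict inclusion $\mu E\subset\mathrm{int}(E)$ and the comparison principle (Corollary~\ref{cpgeometric}), yielding $E^+(\lambda^{1+s}t)\subseteq\lambda E^-(t)$ for $\lambda>1$. Up to this point you are on precisely the paper's track.

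Step~3 diverges from the paper, and this is where the proposal has a genuine gap, which you yourself flag. You want to conclude by contradiction with a pointwise viscosity test at a hypothetical fattening point $(y,t_0)$, using $\psi(x,t)=-\varepsilon(t-t_0)-C|x-y|^2$. But for $\psi$ to touch $u_E$ from below at $(y,t_0)$ you must have $u_E(x,t)\ge-\varepsilon(t-t_0)-C|x-y|^2$ in a full neighbourhood of $(y,t_0)$ in space-time. For $t>t_0$ this needs $u_E(y,\cdot)$ to decay no faster than linearly, and for $t<t_0$ this needs $u_E(y,t)\ge -\varepsilon(t-t_0)>0$ — neither follows from the information you have (Lipschitz regularity of $u_E$ is spatial, uniform in time, as stated in Theorem~\ref{comparison}; no time-Lipschitz bound is claimed, nor would one expect one near a degenerate zero level set). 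You acknowledge that a "quantitative rate" is needed but do not supply it, so the contradiction is not actually reached. Moreover the viscosity rule when $D\phi=0$ discards the curvature term entirely, so the phrase about $H^K$ being finite at $y$ plays no role; the obstruction is purely the touching property.

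The paper avoids this difficulty entirely by replacing the pointwise argument with a measure-theoretic one. From $E^+(\lambda^{1+s}t)\subseteq\lambda E^-(t)$ one gets
\[
|\mathrm{int}(E^+(t))\setminus\overline{E^-(t)}|\le
|\mathrm{int}(E^+(t))|-\lambda^{-n}|E^+(\lambda^{1+s}t)|,
\]
and then Proposition~\ref{scs} (lower semicontinuity of $t\mapsto|E^+(t)|$, proved via Fatou and the comparison with shrinking balls) gives
$\liminf_{\lambda\searrow1}|E^+(\lambda^{1+s}t)|\ge|\mathrm{int}(E^+(t))|$, hence the left-hand side is $\le0$, hence $\Sigma_E(t)$ has empty interior. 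To repair your proposal along your own lines you would have to prove the quantitative time regularity you mention; the shorter path is to invoke Proposition~\ref{scs} and run the paper's measure argument, for which your Steps~1--2 already supply the required inclusion.
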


Now we restrict ourselves to the case of the plane, so $n=2$.  We show that in general,  for  starshaped sets $E$ which do not satisfy \eqref{strictstar1}, 
we can expect either fattening or nonfattening. We provide 
 two  different examples of such sets in $\R^2$, which are particularly interesting in our opinion, since 
 they  model two different type of singularities 
 that can arise in the geometric evolution of closed curves in $\R^2$,
that is the ``Lipschitz-type'' singularity,  and the``cusp singularity''.  
 The first example  is 
the ``double droplet'' in Figure~\ref{Goev0}, namely
\begin{equation}\label{DDR} {\mathcal{G}}:= {\mathcal{G}}_+\cup {\mathcal{G}}_-\subseteq\R^2,\end{equation}
where ${\mathcal{G}}_+$ is the convex hull of~$B_1(-1,1)$ with the origin,
and~${\mathcal{G}}_-$ the convex hull of~$B_1(1,-1)$ with the origin. 
The second example is given by two tangent balls 
\begin{equation}\label{1.26bis}
{\mathcal{O}}:= B_1(-1,0)\cup B_1(1,0)\subseteq \R^2.\end{equation}
We prove that fattening phenomenon occurs in the first case, whereas it does not occur in
 the second.  It is also  interesting  to observe that the evolution of  $\mathcal{O}$  by curvature flow
 immediately develops fattening, see \cite{bell}.
 
\begin{figure}
    \centering
    \includegraphics[width=8cm]{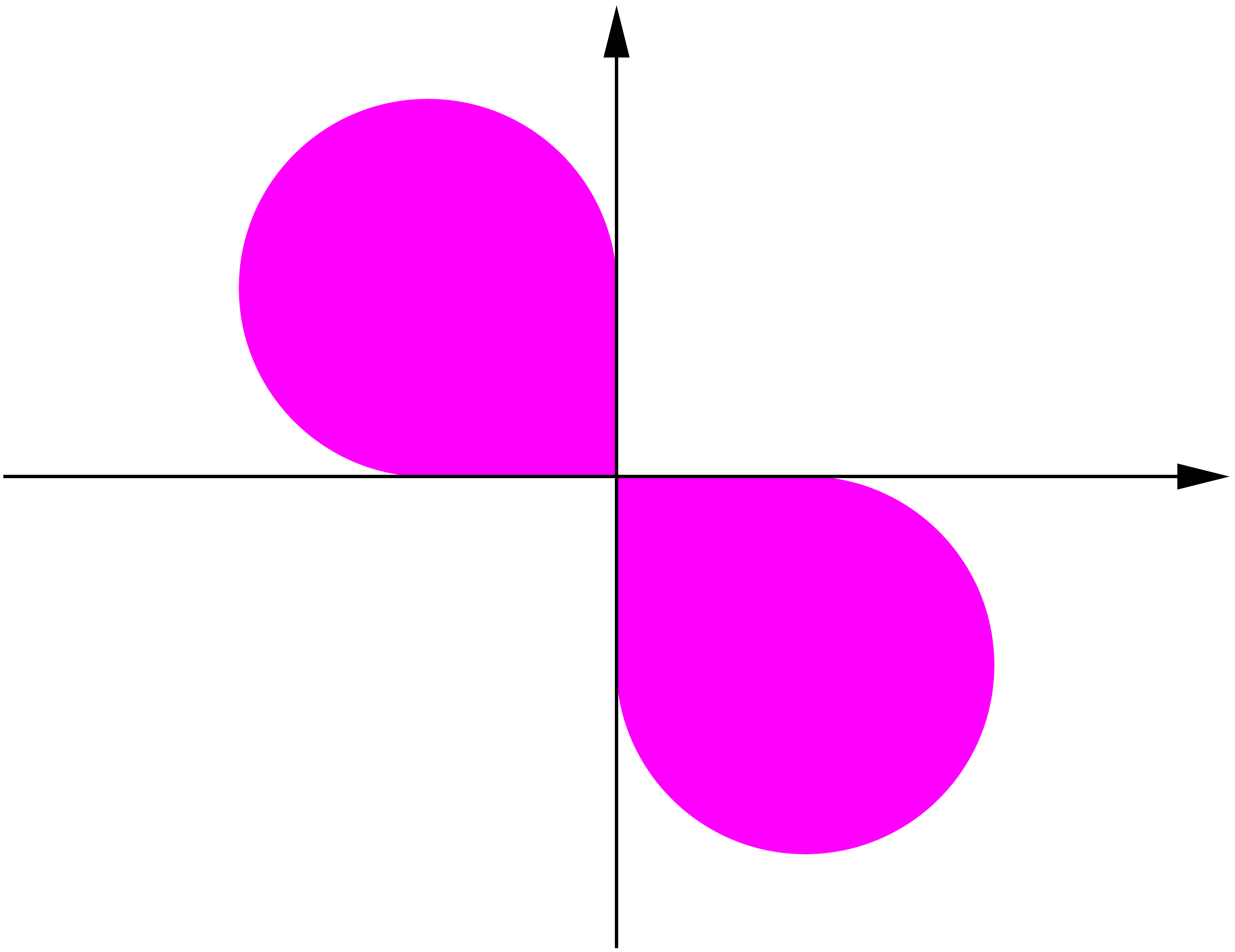}
    \caption{\em {{The double droplet~${\mathcal{G}}$.}}}
    \label{Goev0}
\end{figure}

We start by considering the evolution of the set~${\mathcal{G}}$
defined in~\eqref{DDR}. Note that this provides an example of bounded set with positive $K$-curvature
(being contained in a cross with zero $K$-curvature), whose evolution develops
fattening near the origin, as sketched in
Figure~\ref{Goev} and detailed in the following statement.

\begin{theorem}\label{FAT3}  Assume \eqref{NUCLEONORMA} with $n=2$. 
Then there exist~$\hat c $, $T>0$ such that 
\begin{equation} \label{9238475y6uedjidftgeri:T3}
B_{r(t)}\subset \Sigma_{ {\mathcal{G}} }(t) \end{equation}
for any~$t\in(0,T)$, where
\begin{equation}\label{rdit:T3}
r(t):=\hat c t^{1/(1+s)}.\end{equation}
\end{theorem}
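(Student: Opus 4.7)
The plan is to adapt the barrier strategy of Theorem~\ref{FAT1} to the bounded set $\mathcal{G}$, exploiting the fact that near the origin $\mathcal{G}$ has exactly the geometry of a cross and is globally contained in that cross. The fractional homogeneity \eqref{NUCLEONORMA} enters only through the scaling $\Psi(r)\sim r^{-s}$, which produces the rate $r(t)\sim t^{1/(1+s)}$. Rotate coordinates by $\pi/4$ so that the centers $(\mp 1,\pm 1)$ of the two disks composing $\mathcal{G}$ are sent to $(\mp\sqrt{2},0)$. A direct inspection of the tangent lines from the origin to $\partial B_1(\mp\sqrt{2},0)$ shows that there exists $r_*>0$ such that, in these coordinates,
\[
\mathcal{G}\cap B_{r_*} \;=\; \mathcal{C}\cap B_{r_*}\qquad\text{and}\qquad \mathcal{G}\subset \mathcal{C},
\]
where $\mathcal{C}=\{|y_1|\ge|y_2|\}$ is the cross \eqref{CROSS}.

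\textbf{Key curvature estimate.} For $r\in(0,r_*)$ define the perturbed droplet $\mathcal{G}_r:=\mathcal{G}\cup[-r,r]^2$, mimicking \eqref{PCROSS}. Since $\mathcal{G}_r\subset \mathcal{C}_r$ and the two sets coincide inside $B_{r_*}$, the definition \eqref{CURVA} yields
\[
H^K_{\mathcal{G}_r}(p) \;=\; H^K_{\mathcal{C}_r}(p) \;+\; 2\int_{\mathcal{C}_r\setminus\mathcal{G}_r} K(p-y)\,dy \;\le\; -2\Psi(r)+C_0
\]
for every $p=(t,r)\in[-r,r]\times\{r\}$, where Proposition~\ref{PROP} has been used, and $C_0<+\infty$ is independent of $r$ because $\mathcal{C}_r\setminus\mathcal{G}_r\subset (\mathcal{C}\setminus\mathcal{G})\subset\R^2\setminus B_{r_*}$ lies at distance at least $r_*/2$ from $p$ for small $r$, and $K(y)=|y|^{-2-s}\in L^1(\R^2\setminus B_{r_*/2})$. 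Since $\Psi(r)\sim r^{-s}\to +\infty$ as $r\searrow 0$, this gives $H^K_{\mathcal{G}_r}(p)\le -c\,r^{-s}$ on the top edge, for some $c>0$ and all sufficiently small $r$. The $(y_1,y_2)\mapsto(y_1,-y_2)$ reflection symmetry of $\mathcal{G}$ in the rotated frame delivers the same bound on the bottom edge.

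\textbf{Barrier, comparison and conclusion.} Exactly as in the proof of Theorem~\ref{FAT1}, the above curvature bound allows the family $\{\mathcal{G}_{\rho(t)}\}$ with $\rho(t):=\hat c\,t^{1/(1+s)}$ and $\hat c>0$ small enough that $\rho'(t)\le c\,\rho(t)^{-s}$ to play the role of a viscosity subsolution of \eqref{levelset}, after a standard smoothing of the four corners of $[-\rho,\rho]^2$. The comparison principle then gives
\[
\mathcal{G}^+(t)\;\supset\;\mathcal{G}_{\rho(t)}\;\supset\;B_{\rho(t)} \qquad\text{for every } t\in(0,T)
\]
with $T>0$ sufficiently small. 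On the other hand, from $\mathcal{G}\subset\mathcal{C}$ and the comparison principle we get $\mathcal{G}^-(t)\subset\mathcal{C}^-(t)$; since \eqref{NUCLEONORMA} satisfies \eqref{ROTAZION2}, \eqref{KERSI} and \eqref{UNIMPIU}, Theorem~\ref{FAT1} applied to $\mathcal{C}$ yields $B_{r'(t)}\subset\Sigma_\mathcal{C}(t)$ with $r'(t)\sim t^{1/(1+s)}$, whence $B_{r'(t)}\cap\mathcal{G}^-(t)\subset B_{r'(t)}\cap\mathcal{C}^-(t)=\varnothing$. Taking $\hat c$ equal to the minimum of the two constants just produced, we obtain $B_{\hat c\,t^{1/(1+s)}}\subset \mathcal{G}^+(t)\setminus\mathcal{G}^-(t)\subset\Sigma_\mathcal{G}(t)$ for all small $t$, which is the claim. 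The delicate step is the barrier construction above: one needs to verify the subsolution property rigorously at every point of the moving boundary (not only at the mid-edges) and to handle the four corners of the square via smoothing, following the blueprint already implemented for the cross $\mathcal{C}$ in the proof of Theorem~\ref{FAT1}.
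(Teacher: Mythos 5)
Your overall outline is right — rotate, enlarge $\mathcal{G}$ near the origin by a square $[-r,r]^2$, read off the $K$-curvature drop on the new edges by comparing with the perturbed cross $\mathcal{C}_r$ (and your algebraic identity and the $C_0$-bound on the correction term are fine), and pair this inner barrier with the outer estimate coming from $\mathcal{G}\subset\mathcal{C}$ and Theorem~\ref{FAT1}. However, there is a genuine gap in the barrier construction: the family $\mathcal{G}_{\rho(t)}=\mathcal{G}\cup[-\rho(t),\rho(t)]^2$ is \emph{not} a geometric subsolution, because it is stationary on the far arcs of the two disks. At any $p\in\partial B_1(\pm\sqrt2,0)\cap\partial\mathcal{G}_{\rho(t)}$ one has $\partial_t p\cdot\nu(p)=0$, while $\mathcal{G}_{\rho(t)}\supset B_1(\pm\sqrt2,0)$ forces $0<H^K_{\mathcal{G}_{\rho(t)}}(p)\le H^K_{B_1}(p)$; hence $\partial_t p\cdot\nu(p)=0 > -H^K_{\mathcal{G}_{\rho(t)}}(p)$, and the inequality required by \eqref{subgeo} fails on an entire arc, not just at isolated or angular points.

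This is precisely why the paper's barrier $\mathcal{G}_{\delta,r}$ carries \emph{two} evolving parameters: the square $[-2r,2r]\times[-r,r]$ pushes outward near the origin where the curvature is strongly negative, but the two disks are simultaneously shrunk to radius $1-\delta$, with $\delta(t)$ chosen so that the inward normal speed $r(t)\dot\delta(t)$ on the circular arcs dominates the uniform bound $1/c_\sharp$ on the curvature there (Lemma~\ref{9rierhgj4848dfg85}, estimate \eqref{G021}). To repair your proof you need this second degree of freedom: replace $\mathcal{G}$ by the union of convex envelopes of $B_{1-\delta(t)}(\pm\sqrt2,0)$ with the origin, verify the subsolution inequality separately on (i) the square edges where the curvature is $\lesssim -r^{-s}$, (ii) the straight segments joining the square to the circles, and (iii) the circular arcs, and then couple $\dot r(t)\lesssim r(t)^{-s}$ with $r(t)\dot\delta(t)\gtrsim 1$. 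Also note that no corner smoothing is needed: Proposition~\ref{subgeometrico}, condition \eqref{subgeo2}, admits angular points where the complement is locally convex, which is the case for the corners of the square. The outer half of your argument (comparison with the cross) is correct as stated.
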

\begin{remark}\upshape
The same result as in Theorem \ref{FAT3} holds more generally for kernels $K_0$ which satisfy ~\eqref{ROTAZION}, ~\eqref{ROTAZION2}, \eqref{KERSI} and 
\begin{equation}\label{NOME}
\frac{\underline{a}}{r^{2+s}}\le K_0(r)\le\frac{\overline{a}}{r^{2+s}}\qquad {\mbox{ for all }} r>0
\end{equation}
for some suitable~${\overline{a}}\ge {\underline{a}}>0$.
\end{remark} 
\begin{figure}
    \centering
    \includegraphics[width=8cm]{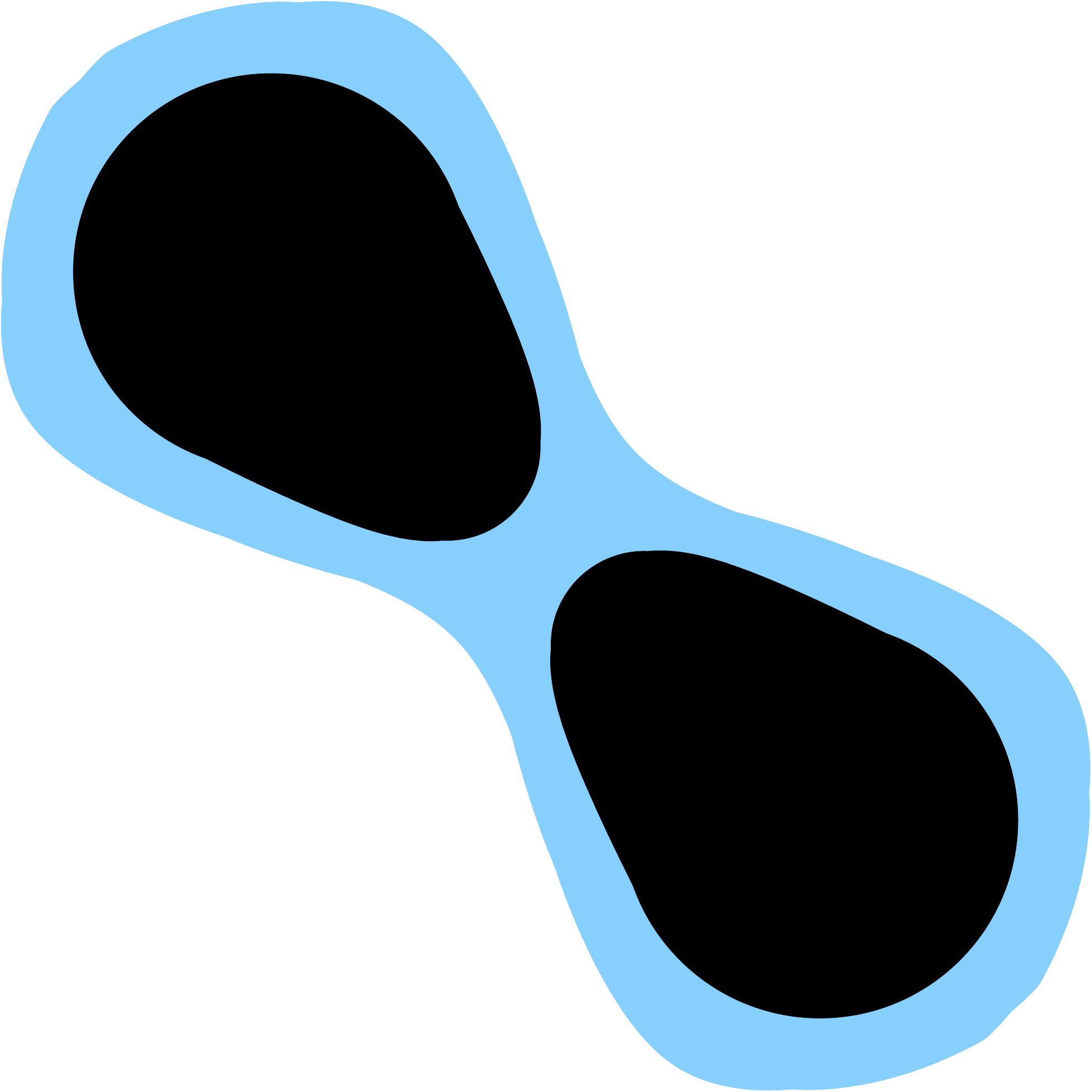}
    \caption{\em {{The fattening phenomenon described in Theorem~\ref{FAT3}.}}}
    \label{Goev}
\end{figure}

We now consider the case of two tangent balls as in~\eqref{1.26bis},
and we show that~${\mathcal{O}}(t)$ presents no fattening phenomenon,
according to the statement below. 

\begin{theorem}\label{DUPAT} Assume \eqref{NUCLEONORMA} with $n=2$. Then the set~$\Sigma_{\mathcal{O}}(t)$ has empty interior for all $t>0$.
\end{theorem}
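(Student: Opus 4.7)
The plan is to approximate $\mathcal{O}$ from outside by strictly starshaped sets (for which Proposition~\ref{strictstar} applies) and then pass to the limit by stability of viscosity solutions. The key geometric observation is that each disk $B_1(\pm 1,0)$ is convex and contains the origin on its boundary, so $\mathcal{O}$ is starshaped with respect to the origin, with radial function $f(\omega)=2|\omega\cdot e_1|$ vanishing precisely in the vertical directions $\pm e_2$. For $\eta>0$, set
\[\mathcal{O}^\eta:=\mathcal{O}\cup\overline{B_\eta(0)}.\]
Its radial function satisfies $f_\eta(\omega)=\max\bigl(2|\omega\cdot e_1|,\eta\bigr)\ge\eta>0$ for every $\omega\in\S^1$, so $\mathcal{O}^\eta$ is \emph{strictly} starshaped in the sense of \eqref{strictstar1}. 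Proposition~\ref{strictstar} then yields that $\Sigma_{\mathcal{O}^\eta}(t)$ has empty interior for every $t>0$.

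By the comparison principle for the level-set flow (Appendix~\ref{viscosec}), admissible initial data can be chosen so that $u_\mathcal{O}\le u_{\mathcal{O}^\eta}$, and this ordering is preserved in time, yielding $E^\pm_\mathcal{O}(t)\subset E^\pm_{\mathcal{O}^\eta}(t)$ for every $t\ge0$. A complementary lower barrier comes from each disk individually: both $B_1(\pm1,0)$ are strictly starshaped with respect to their own centers, and by the scaling properties of the kernel~\eqref{NUCLEONORMA} they evolve as shrinking balls $B_{R(t)}(\pm 1,0)$ with $R(t)^{1+s}=1-c_s(1+s)t$; comparison then gives
\[E^-_\mathcal{O}(t)\supset B_{R(t)}(-1,0)\cup B_{R(t)}(1,0).\]
Letting $\eta\searrow 0$, the initial data can be arranged so that $u_{\mathcal{O}^\eta}\to u_\mathcal{O}$ uniformly, and the plan is to argue by contradiction: if $\Sigma_\mathcal{O}(t_0)$ contained an open ball $B_r(x_0)$, then $u_\mathcal{O}(\cdot,t_0)\equiv 0$ there and, for $\eta$ small, this would produce a slightly smaller open ball on which $0\le u_{\mathcal{O}^\eta}(\cdot,t_0)\le \|u_{\mathcal{O}^\eta}-u_\mathcal{O}\|_\infty$, eventually forcing an open ball in $\Sigma_{\mathcal{O}^\eta}(t_0)$ and contradicting Proposition~\ref{strictstar}.

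The main obstacle is the rigorous propagation of the \emph{empty-interior} property across the limit $\eta\to0$: uniform convergence of continuous viscosity solutions does not in itself preclude the formation of a flat zero plateau in the limit (empty interior is weaker than measure zero and is not stable under arbitrary uniform limits). Overcoming this likely requires either strengthening the conclusion of Proposition~\ref{strictstar} to the measure-zero statement $|\Sigma_{\mathcal{O}^\eta}(t)|=0$ and exploiting the $L^1$-contraction given by the gradient-flow structure~\eqref{per}, or a direct viscosity-solution analysis at the cusp: since the complement of $\mathcal{O}$ near the origin is a thin cusp region, one computes $H^K_\mathcal{O}(0)=-\infty$, which in the viscosity sense of~\eqref{levelset} forces $u_\mathcal{O}(0,t)>0$ for every $t>0$, thereby resolving the cusp instantly and, combined with the positive $K$-curvature of $\partial\mathcal{O}$ away from $0$, precluding any open fattening region from emerging near the origin.
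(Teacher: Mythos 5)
Your proposal correctly identifies the obstacle in its own second paragraph, and that obstacle is fatal as presented: approximating $\mathcal{O}$ by the strictly starshaped sets $\mathcal{O}^\eta=\mathcal{O}\cup\overline{B_\eta}$ gives the comparison $\mathcal{O}^\pm(t)\subseteq(\mathcal{O}^\eta)^\pm(t)$, which runs in the \emph{wrong direction}. If $\Sigma_{\mathcal{O}}(t_0)$ contained a ball $B_r(x_0)$, comparison only tells you that $u_{\mathcal{O}^\eta}(\cdot,t_0)\geq 0$ on $B_r(x_0)$; nothing prevents $u_{\mathcal{O}^\eta}$ from being strictly positive there, so no ball is forced into $\Sigma_{\mathcal{O}^\eta}(t_0)$ and Proposition~\ref{strictstar} is never contradicted. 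Empty interior (and even measure zero) of the zero level set is simply not stable under uniform convergence from above, and neither of the two fixes you suggest closes this: the $L^1$-contraction would require a lower bound on $(\mathcal{O}^\eta)^-(t)$ in terms of $\mathcal{O}^-(t)$, which you do not have; and the observation $H^K_{\mathcal{O}}(0)=-\infty$, while correct and the right intuition, is only a pointwise curvature statement and does not by itself yield a quantitative lower barrier for $\mathcal{O}^-(t)$, nor can Theorem~\ref{positive} be invoked, since the $K$-curvature of $\partial\mathcal{O}$ is not uniformly bounded below by a positive constant near the origin (it tends to $-\infty$ there).

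What the paper actually does is build \emph{inner} barriers, not outer ones, and these inner barriers encode precisely the ``instantaneous cusp resolution'' you intuit. Concretely (Lemmas~\ref{NUK} and~\ref{A-102-A}), it evolves two balls that shrink while their centers move toward each other, $\mathcal{F}_{\e,\mu}(t)=B_{r(t)}(r(t)+\e_\mu(t),0)\cup B_{r(t)}(-r(t)-\e_\mu(t),0)$ with $r$ decreasing at rate $C_0$ and $\e_\mu$ decreasing to zero; Lemma~\ref{NUK} shows the $K$-curvature of such a configuration is $\le C/r^s$ everywhere and strictly negative near the origin, so this family is a geometric subsolution. Running it for time $\sqrt\e$ makes the two balls become \emph{tangent}, yielding $\mathcal{O}^-(\delta)\supseteq(1-\bar C\delta)\,\mathcal{O}$ with a positive boundary gap $\delta^2$. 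From there the argument mirrors the proof of Proposition~\ref{strictstar}: the homogeneity of the fractional kernel (Lemma~\ref{A-102-B}) rescales the dilated set, Corollary~\ref{cpgeometric} propagates the strict inclusion forward in time, and Proposition~\ref{scs} passes to the limit $\delta\searrow 0$ to conclude $|{\rm int}(\mathcal{O}^+(t))\setminus\overline{\mathcal{O}^-(t)}|=0$. In short, the key step you are missing is an explicit inner-barrier construction showing that $\mathcal{O}^-(t)$ immediately engulfs a scaled copy of $\mathcal{O}$, turning the problem into the already-solved strictly starshaped case; without that lemma your outer approximation and cusp heuristics do not assemble into a proof.
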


The evolution of the double ball is sketched in
Figure~\ref{d1234778fGHJKAII}: roughly speaking, the set shrinks at its surroundings,
emanating some mass from the origin, but it does not possess ``gray regions''
at its boundary.\medskip

\begin{figure}
    \centering
    \includegraphics[width=8cm]{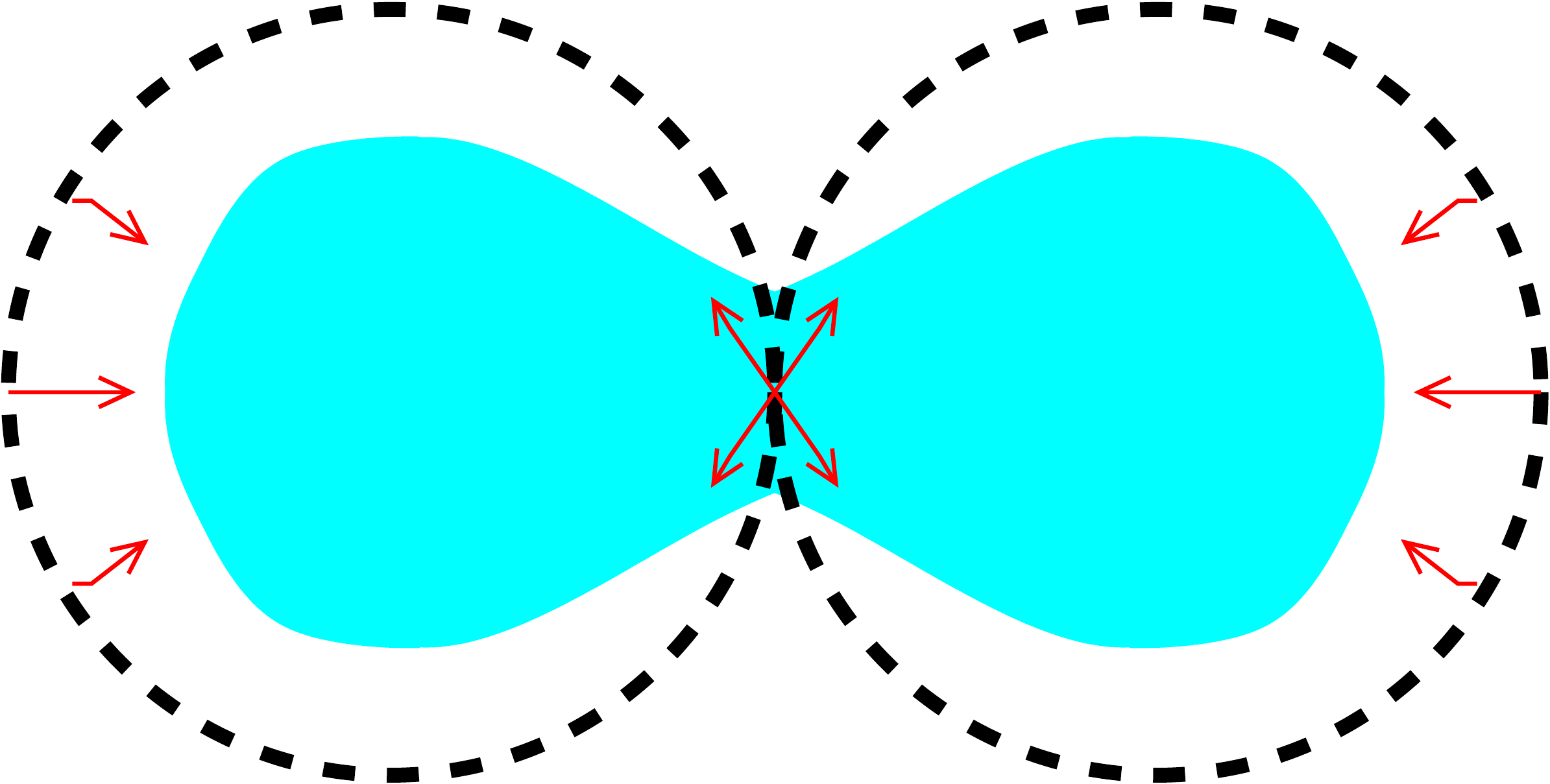}
    \caption{\em {{The evolution of two tangent balls described in Theorem~\ref{DUPAT}.}}}
    \label{d1234778fGHJKAII}
\end{figure}

\medskip
The rest of the paper is organized as follows. Section \ref{secpos}
deals with the fact
that the evolution starting from 
regular sets with positive $K$-curvature does not fatten and it contains the proof
of Theorem \ref{positive}.  In Section \ref{fatcross}  we prove Proposition \ref{PROP} and
the fattening of the evolution starting from the cross in $\R^2$,
under assumption \eqref{KERSI}, as stated in Theorem~\ref{FAT1}.

In Section \ref{secweak}, we show that under assumption \eqref{NOFHY}
the evolution starting from the cross in $\R^2$ does not fatten, but coincides
with the cross itself, that is we prove Theorem~\ref{NOFAT}.

Section \ref{secmin} contains the proof of 
the fact that the cross in $\R^2$ is never a $K$-minimal set for $\Per_K$, thus
establishing Proposition~\ref{mincroce}.

The last three sections present the evolution under the
fractional curvature flow, i.e., we assume that $K(x)=\frac{1}{|x|^{n+s}}$. In particular,
Section \ref{secstar} is devoted to the proof of the fact that the fractional
curvature evolution of strictly starshaped sets does not present fattening, which gives
Proposition~\ref{strictstar}.

In Section \ref{secdoppiagoccia},   we show an example in $\R^2$ of a compact
set with positive $K$-curvature, that is the double droplet, whose
fractional curvature evolution presents fattening,
thus proving Theorem \ref{FAT3}.

Then, in Section \ref{sectangenti} we show that the fractional 
curvature evolution starting from two tangent balls in $\R^2$ does not fatten,
which establishes Theorem~\ref{DUPAT}.

In Appendix \ref{viscosec} we review 
some basic  facts about level set flow, moreover we provide some
auxiliary results about  comparison with geometric barriers
and other basic properties of the evolution which are exploited in the 
proofs of the main results.

\medskip 

\subsubsection*{Notation} 
We denote by $B_r\subset\R^n$ the ball centered at $(0,0)$ of radius $r$
and by~$B_r(x_1,x_2,\dots, x_n)$ the ball of radius $r$ and center $x=(x_1,x_2,\dots, x_n)\in\R^n$.

Moreover $e_1=(1,0,\dots, 0)$, $e_2=(0,1, 0,\dots, 0)$ etc, and $\S^{n-1}= 
\{\omega\in \R^n\text{ s.t. }|\omega|=1\}$.

For a given closed set $E$, and for any~$x\in\R^n\setminus E$
we denote by~$\text{dist}(x, E)$ the distance from $x$ to $E$, that is 
$$ 
\text{dist}(x, E):= \inf_ {y\in E} |x-y|.
$$ 
Moreover,  we will denote with  $d_{E}(x)$  the signed distance function to $C=\partial E$,  
with the sign convention of being positive inside~$E$ and negative outside, that is
\begin{equation}\label{signed} d_E(x)=\begin{cases} 
\text{dist}(x, \R^n\setminus E) &{\mbox{if }} x\in \overline{E},\\
-\text{dist}(x, E) &{\mbox{if }} x\in \R^n\setminus E. \end{cases}\end{equation}
Finally, given two  sets $E, F\subset\R^n$, we denote by 
$d(E,F)$ the distance between the boundary of $E$ and the boundary of $F$, that is
\begin{equation}\label{defdef}
d(E,F):=\min_{{x\in\partial E}\atop{y\in \partial F}} |x-y|.\end{equation}  

\section{Regular sets of positive $K$-curvature and proof of Theorem \ref{positive}}\label{secpos} 

\begin{proof}[Proof of Theorem \ref{positive}]
We recall the continuity in $C^{1,1}$ of the $K$-curvature proved
in~\cite{MR3401008}. Namely, if $E^\eps$ is a family of
compact sets with boundaries in $C^{1,1}$ such that  $E^\eps\to E$ in $C^{1,1}$
(in the sense that the boundaries converges in $C^{1}$ and are of class $C^{1,1}$ uniformly in $\eps$) 
and $x^\e\in\partial E^\e\to x\in \partial E$, then $H^K_{E^\e}(x^\e)\to H^K_E(x)$, as~$
\e\searrow0$.

Now, let  $E$ be as in the statement of
Theorem \ref{positive}, and define, for $r>0$,
$$E^r:=\{x\in\R^n{\mbox{ s.t. }} d_E(x)\geq  -r\}.$$ 
Then, using also~\eqref{g7g7fugvasfegdu18iqey20}, we find that
there exists $\eps_0>0$ such that, 
for all $\eps\in (0, \eps_0) $,  there exists $0<\delta(\eps)\leq \delta$ such that 
 \[\min_{x\in \partial E^\eps} H^K_{E^\eps}(x)\geq \delta(\eps) >0.\] 
Fix $\eps<\eps_0$ and let $\bar{\delta}:=\inf_{\eta\in [0, \eps]} \delta(\eta)>0$. Fix $0<h<\bar \delta$. 
For all $t\in \left[0, \frac{\eps}{\bar \delta}\right]$ we define 
\[C(t):=E^{\eps-(\bar \delta-h)t}.\]  
We observe that  $C(t)$ is a supersolution to \eqref{kflow}, in the sense that it
satisfies \eqref{supergeo}. Indeed, 
\[ \partial_t x\cdot \nu=-\bar \delta+h\geq  -H^K_{C(t)}(x)+h.\]
Since $E\subseteq E^\eps=C(0)$, by Proposition \ref{subgeometrico},
we get that
\[ E^{+}(s)\subseteq  C(s)=E^{\eps-(\bar \delta-h)s} \ \ \  {\mbox{ for all }}
 s\in \left(0, \frac{\eps}{\bar \delta}\right]\quad 
{\mbox{with   }  }
d\left(E^+(s),E^{\eps-(\bar \delta-h)s}\right)\geq \eps.\]
This implies that  $E^{+}(s)\subseteq E$ for all $s\in \left[0, \frac{\eps}{\bar \delta}\right]$ and for all $h<\bar \delta$ 
and moreover that \[d\left(E^+(s), E\right)\geq d(E^{+}(s),E^{\eps-(\bar \delta-h)s})-d(E^{\eps-(\bar \delta-h)s}, E)\geq (\bar \delta-h)s.\]
Then, by the Comparison Principle in
Corollary \ref{cpgeometric}, we get  that \begin{equation}\label{diseq1}
E^{+}(t+s)\subseteq E^{-}(t), \quad  
{\mbox{with }} 
d\left(E^{+}(t+s),E^{-}(t)\right)\geq (\bar \delta-h)s\qquad  {\mbox{ for all }}
t>0, s\in \left(0, \frac{\eps}{\bar \delta}\right], h<\bar \delta.\end{equation}  Therefore, recalling Proposition \ref{scs}, we get 
\[| {\rm{int}}\, (E^+(t))\setminus \overline{E^-(t)}|\leq \limsup_{s\searrow 0}|
{\rm{int}}\, (E^+(t))|-  | E^+(t+s)|=
|{\rm{int}} (E^+(t))|-\liminf_{s\searrow 0} | E^+(t+s)| \leq 0.\] 
This gives the desired statement in Theorem \ref{positive}. 
\end{proof}

\section{$K$-curvature of the perturbed cross
and proofs of Proposition~\ref{PROP} and  of  Theorem~\ref{FAT1}}\label{fatcross}

In this section, our state space is $\R^2$. We consider the cross~${\mathcal{C}}\subseteq \R^2$ introduced in~\eqref{CROSS}
and the perturbed cross defined in~\eqref{PCROSS}. We also make
use of the notation in~\eqref{defpsi}. Then, we have:

\begin{lemma}\label{PC=2} Assume that~\eqref{ROTAZION} and
\eqref{ROTAZION2} hold true in $\R^2$.
Then, for any~$t\in[-r,r]$,
$$ H^K_{ {\mathcal{C}}_r }(t,r)\le-2\Psi(r).$$
\end{lemma}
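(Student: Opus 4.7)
The plan is to exploit the symmetry $K(-z)=K(z)$ in order to rewrite the curvature as a pair-wise balance about $p$ and then extract the bound $-2\Psi(r)$ from a carefully chosen pair of small balls inside $\mathcal{C}_r$ symmetric through $p$. Substituting $z = y - p$ and averaging the $z$ and $-z$ contributions in the principal-value integral defining $H^K_{\mathcal{C}_r}(p)$, one obtains
\[
 H^K_{\mathcal{C}_r}(p) = \int_{\Omega^-} K(z)\,dz - \int_{\Omega^+} K(z)\,dz,
\]
where
\[
 \Omega^+ := \{z\in\R^2 : p+z \in \mathcal{C}_r \text{ and } p-z \in \mathcal{C}_r\},\qquad
 \Omega^- := \{z\in\R^2 : p+z \notin \mathcal{C}_r \text{ and } p-z \notin \mathcal{C}_r\}
\]
are both symmetric under $z\mapsto -z$. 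The inequality $H^K_{\mathcal{C}_r}(p)\le -2\Psi(r)$ is thus equivalent to
$\int_{\Omega^+} K(z)\,dz - \int_{\Omega^-} K(z)\,dz \ge 2\Psi(r)$.

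The heart of the argument is to exhibit, for every $t\in[-r,r]$, a vector $v_t\in\R^2$ with $|v_t|=7r/4$ such that $B_{r/4}(p+v_t)$ and $B_{r/4}(p-v_t)$ are both contained in $\mathcal{C}_r$. Then by the rotational invariance of $K$ (which gives $\int_{B_{r/4}(c)} K = \Psi(r)$ for every $c$ with $|c|=7r/4$), the two disjoint balls $B_{r/4}(v_t)$ and $B_{r/4}(-v_t)$ each have $K$-mass exactly $\Psi(r)$ and both sit inside $\Omega^+$, whence $\int_{\Omega^+}K \ge 2\Psi(r)$. For $t=0$ the choice $v_t=(7r/4,0)$ works: $B_{r/4}(p\pm v_t)$ lies in the right or left arm of the cross. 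For $|t|$ up to $r$ one tilts the direction, taking $v_t=(7r/4)(\cos\theta(t),\sin\theta(t))$ for a suitable angle $\theta(t)$; a direct geometric check shows that one of the two balls lies safely inside a cross arm, while the $p$-reflected companion fits into the square $[-r,r]^2\subseteq \mathcal{C}_r$. The specific value $7r/4=2r-r/4$ is precisely what makes this pair exist uniformly in $t\in[-r,r]$.

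The final step is to dominate the ``both-outside'' set $\Omega^-$. A direct case analysis using $\mathcal{C}_r^c\subseteq\{|y_2|>r\}$ shows that $p\pm z\notin \mathcal{C}_r$ forces $|z_2|>2r$, so $\Omega^-$ is contained in two symmetric thin cones opening along the $z_2$-axis at distance at least $2r$ from the origin. The reflection $z\mapsto (z_1,-z_2)$, which preserves $|z|$ and therefore $K(z)$, maps these cones into the part of $\Omega^+$ lying in the horizontal ``tails'' of the cross and disjoint from the two constructed balls, yielding an injection
\[
\int_{\Omega^-}K(z)\,dz \le \int_{\Omega^+\setminus(B_{r/4}(v_t)\cup B_{r/4}(-v_t))}K(z)\,dz.
\]
Combined with the previous lower bound, this completes the proof.

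The main obstacle is this last step: $\mathcal{C}_r$ has no reflection symmetry about a generic boundary point $p=(t,r)$, so the pairing from $\Omega^-$ into the excess of $\Omega^+$ is not produced by a symmetry of the set and must be arranged by hand, relying on the geometry of the cross arms, the square, and the rotational invariance of $K$ (crucially, without requiring any monotonicity of $K_0$).
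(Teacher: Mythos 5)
Your symmetrization identity and the strategy of extracting $-2\Psi(r)$ from two antipodal balls are sound, and the existence of $v_t$ is in fact true (though you only sketch it, and the sketch is not literally right: the companion ball need not fit into the square; a clean uniform choice for $t\in[0,r]$ is $v=(\sqrt{3}\,r,r/4)$, with $|v|=7r/4$, which puts $B_{r/4}(p+v)$ inside the right arm and $B_{r/4}(p-v)$ inside $\{|x_2|\le r\}\subseteq\mathcal{C}_r$, and the mirrored vector for $t\in[-r,0]$). The genuine gap is the last step. The reflection $z\mapsto(z_1,-z_2)$ does not map $\Omega^-$ into $\Omega^+$; it maps it entirely outside $\Omega^+$. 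Indeed, if $z\in\Omega^-$ with $z_2>2r$, then $p-z=(t-z_1,r-z_2)\notin\mathcal{C}_r$ forces $|t-z_1|<z_2-r$; but for $\tilde z=(z_1,-z_2)$ one has $p-\tilde z=(t-z_1,r+z_2)$, which belongs to $\mathcal{C}_r$ only if $|t-z_1|\ge r+z_2$ — impossible. The same happens on the lower cone by the symmetry $z\mapsto-z$. (A symptom of the problem: your map does not change $|z_2|$, so it cannot possibly send $\{|z_2|>2r\}$ into the horizontal tails of the cross, contrary to your description.) Hence the key inequality $\int_{\Omega^-}K\le\int_{\Omega^+\setminus(B_{r/4}(v_t)\cup B_{r/4}(-v_t))}K$ is left unproved, and with it the lemma.

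The step can be repaired, but with a different isometry: for $t\in[0,r]$ use the reflection across the diagonal, $z\mapsto(z_2,z_1)$ (the other diagonal for $t\in[-r,0]$). One checks that the upper cone of $\Omega^-$ is exactly $\{z_2>2r,\ |z_1-t|<z_2-r\}$, and for such $z$ both $p+(z_2,z_1)=(t+z_2,r+z_1)$ and $p-(z_2,z_1)=(t-z_2,r-z_1)$ lie in the cross, because $|r+z_1|\le z_2+t$ and $|r-z_1|\le z_2-t$ (here $0\le t\le r$ is used); the lower cone is handled by $z\mapsto-z$. Since the image satisfies $|w|=|z|>2r$ while both constructed balls lie in $\overline{B_{2r}}$, the image avoids them, and your final inequality follows. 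This diagonal reflection is essentially the device in the paper's own proof, which first splits off the lower triangle $\mathcal{T}_r$ (whose contribution produces the $-2\Psi(r)$) and then cancels the remaining set $\mathcal{D}_r$ by reflecting across the horizontal line through $p$ and across the line $\ell$ through $p$ parallel to the edges of the cross; your write-up, as it stands, tries to make the horizontal reflection do a job it cannot do for the full configuration.
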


\begin{figure}
    \centering
    \includegraphics[width=8cm]{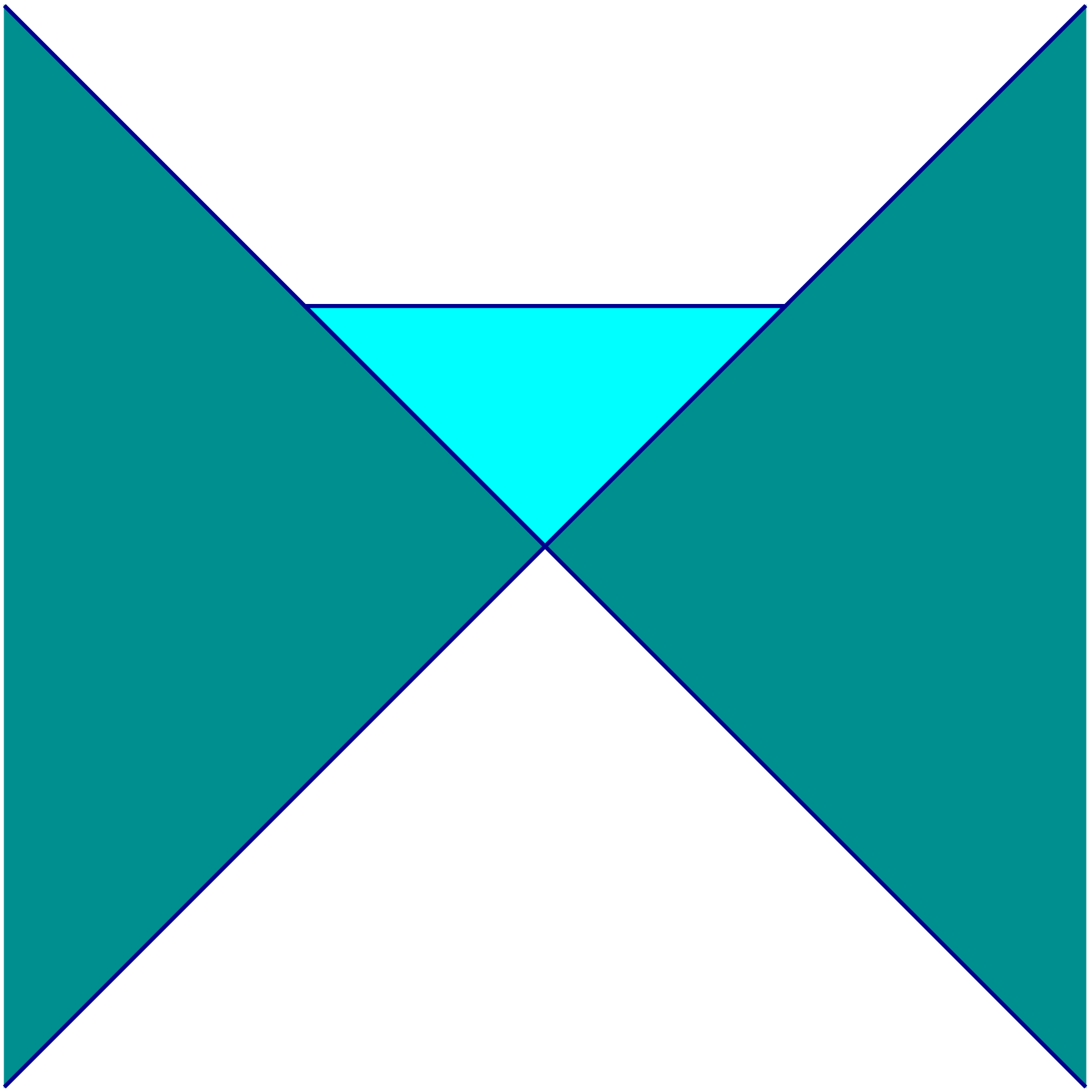}
    \caption{\em {{The set~${\mathcal{D}}_r$.}}}
    \label{D1}
\end{figure}

\begin{proof} Let
\begin{equation*}
{\mathcal{T}}_r:=\Big( (-r,r)^2\setminus{\mathcal{C}}\Big)\cap\{x_2<0\}
\end{equation*}
and
\begin{equation*}
{\mathcal{D}}_r:={\mathcal{C}}_r\setminus {\mathcal{T}}_r,\end{equation*}
see Figure~\ref{D1}.
Notice that~${\mathcal{C}}_r$ is the disjoint union of~${\mathcal{D}}_r$
and~${\mathcal{T}}_r$,
hence
$$ \chi_{ {\mathcal{C}}_r }=\chi_{ {\mathcal{D}}_r }+\chi_{ {\mathcal{T}}_r },$$
while~$\R^2\setminus{\mathcal{D}}_r$ is the disjoint union of~$\R^2\setminus{\mathcal{C}}_r$
and~${\mathcal{T}}_r$, which gives that
$$ \chi_{ \R^2\setminus{\mathcal{D}}_r }=\chi_{
\R^2\setminus{\mathcal{C}}_r}+\chi_{{\mathcal{T}}_r}.$$
Hence, we find that
\begin{equation}\label{819:2}
\chi_{\R^2\setminus{\mathcal{C}}_r}-\chi_{{\mathcal{C}}_r}=
\chi_{\R^2\setminus{\mathcal{D}}_r}-\chi_{{\mathcal{D}}_r}
-2\chi_{{\mathcal{T}}_r}.
\end{equation}
Now, we claim that, for any~$t\in[-r,r]$,
\begin{equation}\label{819:3}
H^K_{ {\mathcal{D}}_r }(t,r)\le0.\end{equation}
To this end, we partition~$\R^2$ into different regions, as depicted in Figure~\ref{X1},
and we use the notation, for each set~$Y\subseteq\R^2$,
\begin{equation}\label{INT-H}
{\mathcal{H}}(Y):=\lim_{\e\searrow0}\int_{Y\setminus B_\e(t,r)} 
K\big(x-(t,r)\big)\,dx.\end{equation}
In this way, we can write~\eqref{CURVA} as
\begin{equation}\label{0-1-P1}
H^K_{ {\mathcal{D}}_r }(t,r) = {\mathcal{H}}(C)+
{\mathcal{H}}(D)+
{\mathcal{H}}(U')+
{\mathcal{H}}(V')+
{\mathcal{H}}(W')-
{\mathcal{H}}(A)-
{\mathcal{H}}(B)-
{\mathcal{H}}(U)-{\mathcal{H}}(V)-{\mathcal{H}}(W).
\end{equation}
On the other hand, we can use symmetric reflections across the horizontal straight line
passing through the pole~$(t,r)$ to conclude that~${\mathcal{H}}(U)={\mathcal{H}}(U')$.
Similarly, we see that~${\mathcal{H}}(V)={\mathcal{H}}(V')$
and~${\mathcal{H}}(W)={\mathcal{H}}(W')$. As a consequence, the identity in~\eqref{0-1-P1}
becomes
\begin{equation}\label{0-1-P2}
H^K_{ {\mathcal{D}}_r }(t,r) = {\mathcal{H}}(C)+
{\mathcal{H}}(D)-
{\mathcal{H}}(A)-
{\mathcal{H}}(B).\end{equation}

\begin{figure}
    \centering
    \includegraphics[width=8cm]{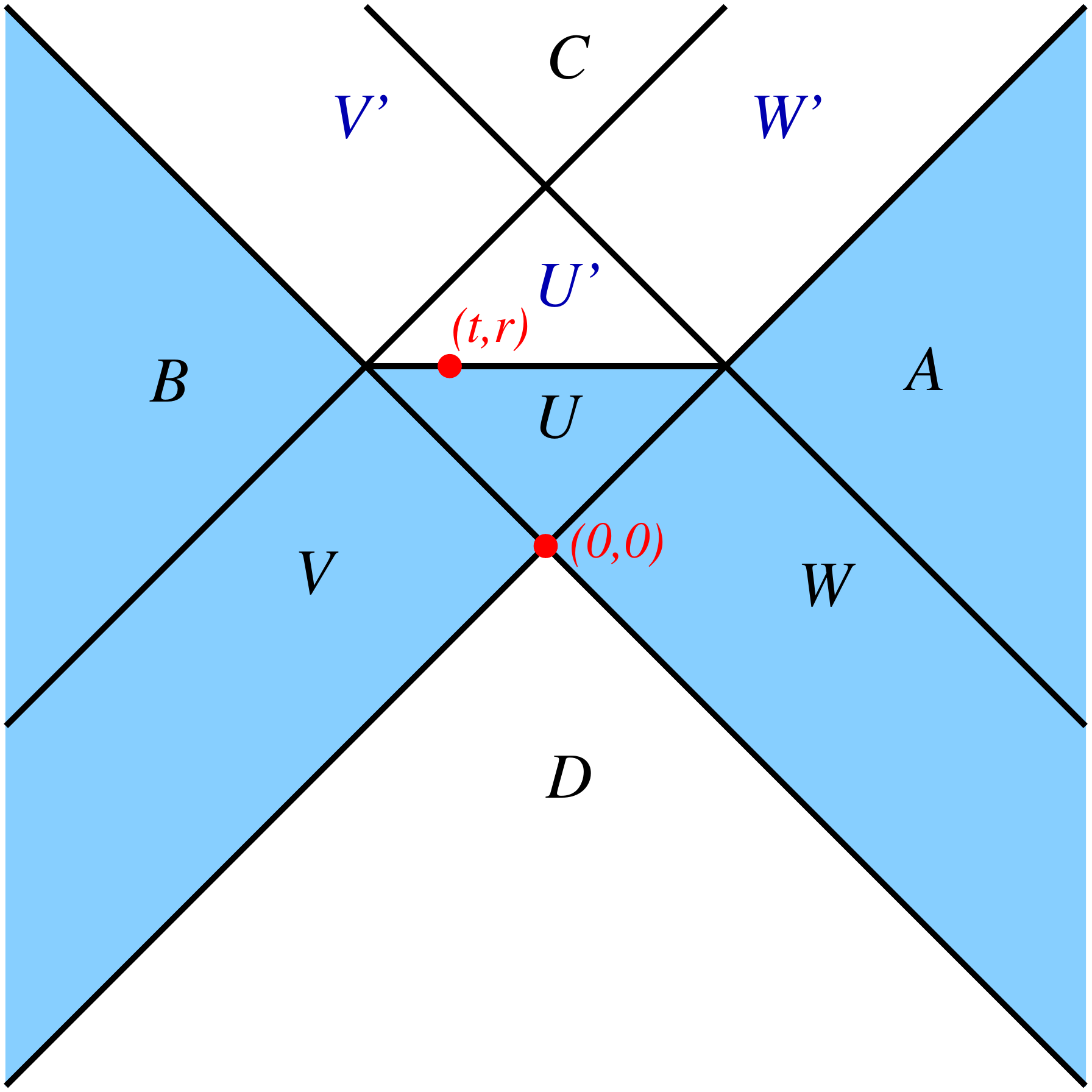}
    \caption{\em {{Splitting the set~${\mathcal{D}}_r$ and its complement
    into isometric regions.}}}
    \label{X1}
\end{figure}

Now we consider the straight line~$\ell:=\{x_2=x_1-t+r\}$. Notice that~$\ell$ passes
through the point~$(t,r)$ and it is parallel to two edges of the cross~${{\mathcal{C}}_r}$.
Considering the framework in Figure~\ref{X1},
reflecting the set~$D$ across~$\ell$ we obtain a set~$D'\subseteq B$,
and we write~$B=D'\cup E$, for a suitable slab~$E$.
Similarly, we reflect the set~$A$ across~$\ell$ to obtain a set~$ A'$
which is contained in~$C$, and we write~$C= A'\cup F$, for a suitable slab~$F$,
see Figure~\ref{X1ANNA}.

\begin{figure}
    \centering
    \includegraphics[width=8cm]{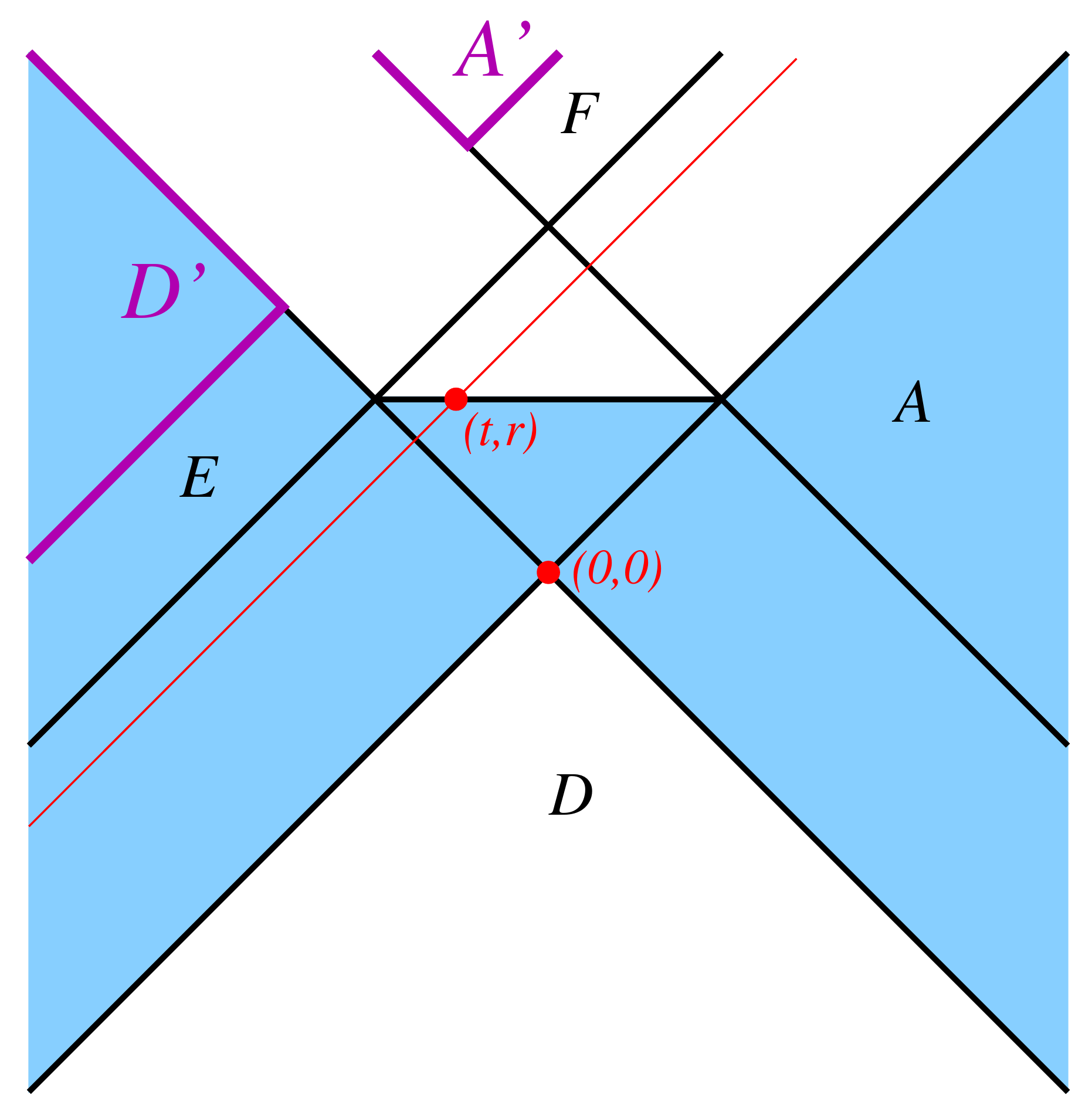}
    \caption{\em {{Reflecting~$D$ and~$A$ across~$\ell$, being~$E:=B\setminus D'$
    and~$F:=C\setminus A'$.}}}
    \label{X1ANNA}
\end{figure}

In further details, if~$T:\R^2\to\R^2$ is the reflection across~$\ell$,
we have that~$T(t,r)=(t,r)$ and~$|T(x-(t,r))|=|T(x)-(t,r)|=|x-(t,r)|$ for every~$x\in\R^2$,
and thus, by~\eqref{ROTAZION},
$$ K\big(x-(t,r)\big)=K_0(|x-(t,r)|)=K_0\big( \big|T(x-(t,r)) \big|\big)=K\big( T(x-(t,r)) \big).
$$
Accordingly, since~$D=T(D')$,
\begin{equation}\label{B82-1}
\begin{split}&
{\mathcal{H}}(B)-{\mathcal{H}}(D)=
\int_{B} K\big(x-(t,r)\big)\,dx - \int_{T(D')} K\big(x-(t,r)\big)\,dx\\&\qquad=
\int_{B} K\big(x-(t,r)\big)\,dx - \int_{D'} K\big(x-(t,r)\big)\,dx=
\int_{E} K\big(x-(t,r)\big)\,dx,
\end{split}\end{equation}
and similarly
\begin{equation}\label{B82-2} {\mathcal{H}}(C)-{\mathcal{H}}(A)=
\int_{F} K\big(x-(t,r)\big)\,dx.\end{equation}
Now we consider the straight line~$\ell':=\{x_2=-x_1+t+r\}$. Notice that~$\ell$ passes
through the point~$(t,r)$ and it is perpendicular to~$\ell$.
We let~$E'$ be the reflection across~$\ell'$ of the set~$E$ and we notice that~$E'\supseteq F$.
Therefore
$$ \int_{E} K\big(x-(t,r)\big)\,dx=\int_{E'} K\big(x-(t,r)\big)\,dx
\ge \int_{F} K\big(x-(t,r)\big)\,dx.$$
{F}rom this, \eqref{B82-1} and~\eqref{B82-2}, we obtain
\begin{eqnarray*}
&&{\mathcal{H}}(C)+
{\mathcal{H}}(D)-
{\mathcal{H}}(A)-
{\mathcal{H}}(B)= \int_{F} K\big(x-(t,r)\big)\,dx-\int_{E} K\big(x-(t,r)\big)\,dx\le0.
\end{eqnarray*}
This and~\eqref{0-1-P2} imply the desired result in~\eqref{819:3}.

Then, by~\eqref{819:2} and \eqref{819:3},
$$ H^K_{ {\mathcal{C}}_r }(t,r)=H^K_{ {\mathcal{D}}_r }(t,r)-2
\int_{ {\mathcal{T}}_r } K\big(y-(t,r)\big)\, dy\le0-2\Psi(r),$$
and this gives the desired result.
\end{proof}

With this, we are now in the position of completing the proof of Proposition~\ref{PROP}
via the following argument:

\begin{proof}[Proof of Proposition~\ref{PROP}] The claim in~\eqref{PRO2}
follows from Lemma~\ref{PC=2}. In addition, we have that~${\mathcal{C}}\subset {\mathcal{C}}_r$,
due to~\eqref{PCROSS}. We also observe that if~$p\in (\partial{\mathcal{C}}_r)\setminus
[-r,r]^2$, then~$p\in\partial{\mathcal{C}}$.
Consequently, by~\eqref{CURVA}, for
any~$p\in (\partial{\mathcal{C}}_r)\setminus
[-r,r]^2$, we have that
\begin{equation}\label{9230412}
H^K_{\mathcal{C}}(p)\ge H^K_{{\mathcal{C}}_r}(p). 
\end{equation}
Also, by symmetry, we see that~$H^K_{\mathcal{C}}(p)=0$
at any point ~$p\in \partial{\mathcal{C}}$, hence~\eqref{9230412}
gives that~$H^K_{{\mathcal{C}}_r}(p)\le0$ for any~$p\in (\partial{\mathcal{C}}_r)\setminus
[-r,r]^2$. 
 Since this inequality is also valid when~$
p\in (\partial{\mathcal{C}}_r)\cap[-r,r]^2$, due to~\eqref{PRO2},
the proof of~\eqref{PRO1} is complete.
\end{proof}

With Proposition~\ref{PROP}, we can now construct inner and outer barriers
as in Corollary~\ref{unboundedcoro}
to complete
the proof of Theorem~\ref{FAT1}. 
This auxiliary construction goes as follows.

\begin{lemma}\label{yg9uytfdiuye5t8wryegufgfiwefw}
Let~${\mathcal{C}}_r$ be as in \eqref{PCROSS}. Let~$R:=3\sqrt{2}\,r$
and define, for $\lambda\in \left[0,\frac{r}{2}\right)$,
\begin{equation}\label{PlIAK2dfgh93k} {\mathcal{C}}_r^\lambda:=\left\{x\in\R^2 {\mbox{ s.t. }}
d_{ {\mathcal{C}}_r }(x)\le-\lambda\right\}.\end{equation}
Then, for any~$p\in(\partial{\mathcal{C}}_r^\lambda)\setminus B_R$,
we have that~$H^K_{{\mathcal{C}}_r^\lambda}(p)\le0$.
\end{lemma}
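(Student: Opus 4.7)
The plan is to reduce the bound to the symmetric case by combining the monotonicity of the $K$-curvature under set inclusion with the reflection symmetry of the cross already exploited in the proof of Proposition~\ref{PROP}.

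First I would describe the geometry of $\partial{\mathcal{C}}_r^\lambda$. The boundary $\partial{\mathcal{C}}_r$ consists of four segments (the top and bottom sides of the square $[-r,r]^2$) together with four half-lines (the outer cross edges $\{x_2=\pm x_1,\ |x_2|\ge r\}$), meeting at the corners $(\pm r,\pm r)$. Accordingly, $\partial{\mathcal{C}}_r^\lambda$ is made of parallel shifts of these straight edges at distance $\lambda$, joined by four quarter-circle arcs of radius $\lambda$ centered at the corners. Since each corner lies at distance $r\sqrt{2}$ from the origin and $\lambda<r/2$, every such arc is contained in the ball of radius $r\sqrt{2}+\lambda<\tfrac{3\sqrt{2}}{2}r<3\sqrt{2}\,r=R$. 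Hence any $p\in(\partial{\mathcal{C}}_r^\lambda)\setminus B_R$ sits on one of the shifted straight edges.

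By the dihedral symmetry of ${\mathcal{C}}_r$ I may assume that $p$ lies on $\ell:=\{x_2=x_1+\lambda\sqrt{2}\}$, the parallel shift of the upper edge of the right arm of the cross. I then introduce the translated cross
\[
\tilde{\mathcal{C}}:={\mathcal{C}}+v,\qquad v:=\Big(-\tfrac{\lambda}{\sqrt{2}},\,\tfrac{\lambda}{\sqrt{2}}\Big),
\]
whose upper-right edge lies along $\ell$ and contains $p$ (the required lower bound on $p_1$ is automatic from $|p|>R$). The crucial observation is the inclusion $\tilde{\mathcal{C}}\subseteq{\mathcal{C}}_r^\lambda$: any $q\in\tilde{\mathcal{C}}$ satisfies $q-v\in{\mathcal{C}}\subseteq{\mathcal{C}}_r$, hence $\text{dist}(q,{\mathcal{C}}_r)\le|v|=\lambda$, so $q\in{\mathcal{C}}_r^\lambda$.

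I would then combine two standard facts. The reflection across $\ell$ maps $\tilde{\mathcal{C}}$ bijectively onto its complement (this is the same cross symmetry that gives $H^K_{{\mathcal{C}}}\equiv0$ on $\partial{\mathcal{C}}$ in the proof of Proposition~\ref{PROP}); together with the rotational invariance of $K$ stated in~\eqref{ROTAZION}, this yields $H^K_{\tilde{\mathcal{C}}}(p)=0$. On the other hand, the formula~\eqref{CURVA} gives the monotonicity principle: whenever $E\subseteq F$ with $p\in\partial E\cap\partial F$, one has $H^K_E(p)-H^K_F(p)=2\int_{F\setminus E}K(p-y)\,dy\ge 0$. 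Applying this with $E=\tilde{\mathcal{C}}$ and $F={\mathcal{C}}_r^\lambda$ produces $H^K_{{\mathcal{C}}_r^\lambda}(p)\le H^K_{\tilde{\mathcal{C}}}(p)=0$, which is the desired bound. The only point requiring quantitative care is the estimate that pushes the rounded corner arcs strictly inside $B_R$, which is exactly what dictates the choice $R=3\sqrt{2}\,r$; the remainder is a routine application of monotonicity, so no serious obstacle is expected.
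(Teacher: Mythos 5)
Your argument is essentially the paper's own proof: you translate a copy of~${\mathcal{C}}$ (perpendicularly by~$\lambda$, where the paper uses a vertical shift) so that it touches~${\mathcal{C}}_r^\lambda$ from inside at~$p$, invoke the symmetry fact~$H^K_{\mathcal{C}}\equiv 0$ on~$\partial{\mathcal{C}}$, and conclude by monotonicity of the $K$-curvature under inclusion at a common boundary point, which is exactly the chain~$H^K_{{\mathcal{C}}_r^\lambda}(p)\le H^K_{{\mathcal{C}}+v_0}(p)=0$ in the paper. The only inaccuracy is cosmetic: since the corners of~${\mathcal{C}}_r$ are reflex, the boundary of the enlarged set has angular points rather than quarter-circle arcs there, but these lie well inside~$B_R$, so your reduction to the shifted straight edges is unaffected.
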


\begin{figure}
    \centering
    \includegraphics[width=8cm]{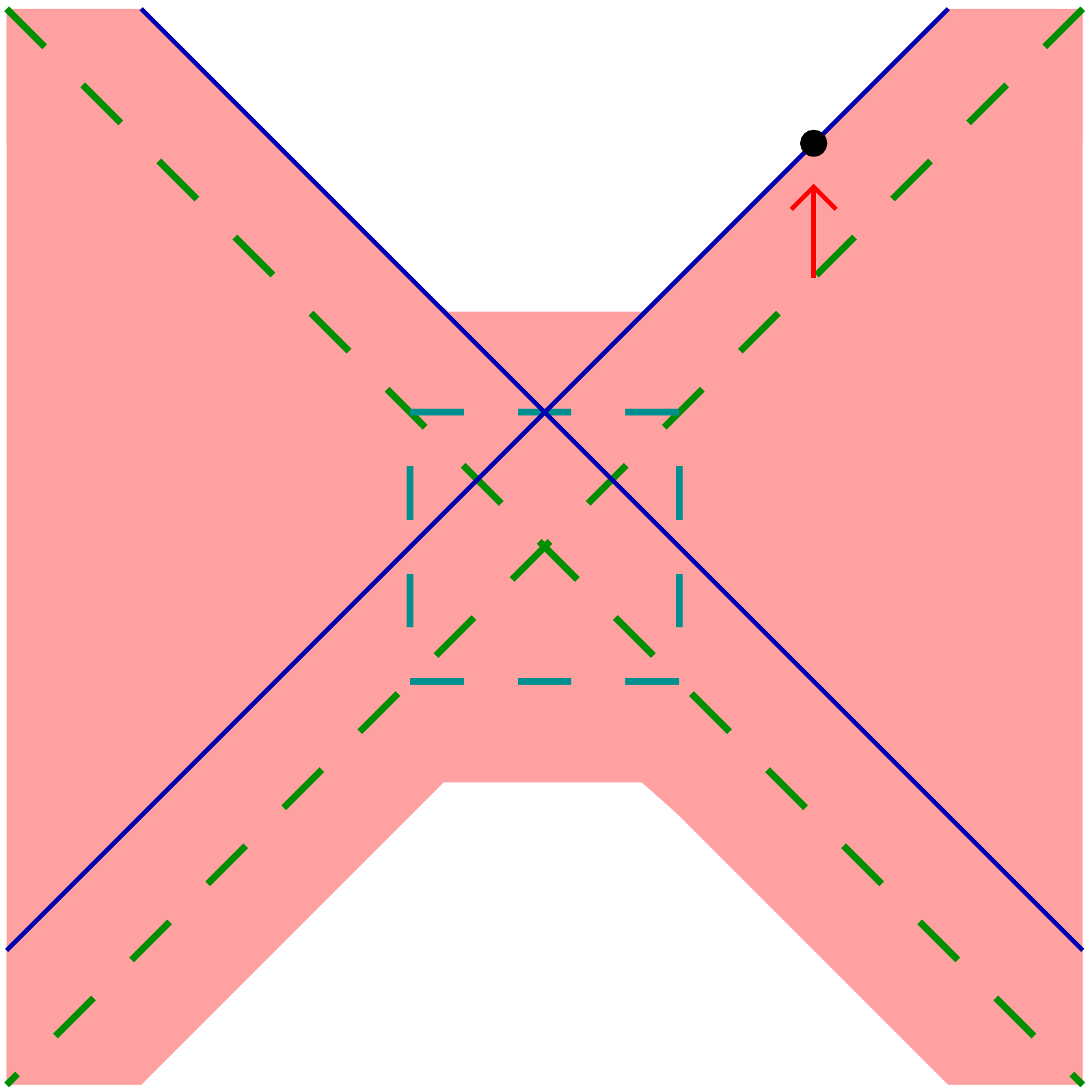}
    \caption{\em {{The set~${\mathcal{C}}_r^\lambda$, touched from inside at a boundary point by a translation of~${\mathcal{C}}$.}}}
    \label{POM4}
\end{figure}

\begin{proof} We observe that if~$p\in(\partial{\mathcal{C}}_r^\lambda)
\setminus B_R$,
then~$\partial{\mathcal{C}}_r^\lambda$ in the vicinity of~$p$
is a segment, and there exists a vertical translation of~${\mathcal{C}}$
by a vector~$v_0:=\pm\sqrt\lambda\,e_2$ such that~$p\in
{\mathcal{C}}+v_0$ and~${\mathcal{C}}+v_0\subset{\mathcal{C}}_r^\lambda$,
see Figure~\ref{POM4}. {F}rom this, we find that
$$ H^K_{{\mathcal{C}}_r^\lambda}(p)\le H^K_{{\mathcal{C}}+v_0}(p)=
H^K_{{\mathcal{C}}}(p-v_0)=0,$$
as desired.
\end{proof}

With this, we are ready to complete the proof of Theorem~\ref{FAT1},
by arguing as follows.

\begin{proof}[Proof of Theorem~\ref{FAT1}] 
The proof is based on the construction of suitable families
of geometric sub and supersolutions
starting from the perturbed cross $\mathcal{C}_r$, as defined
in \eqref{PCROSS}, to which apply Corollary \ref{unboundedcoro}. 

We
observe that $$
\mathcal{C}=\bigcap_{r>0} \mathcal{C}_r.$$ Moreover,
we see that $$
d_{\mathcal{C}}(x)\leq d_{\mathcal{C}_r}(x)\leq d_{\mathcal{C}}(x)+r.$$
These observations, together with
the Comparison Principle in Theorem \ref{comparison} and
Remark \ref{geosigma}, imply that 
\begin{equation}\label{86969741221122222}
\mathcal{C}^+(t)=\bigcap_{r>0} \mathcal{C}_r^+(t),\qquad {\mbox{ for all }} t>0. \end{equation}
Analogously, one can define \begin{equation}\label{Crgaue10}
\mathcal{C}^r
:= (\R^2\setminus \mathcal{C})\cup[-r,r]^2.\end{equation}
Let $\Psi$ as defined in \eqref{defpsi}.  
Fixed $r\in (0,r_0)$, where $r_0$ is as in \eqref{UNIMPIU},
we define $r_*(t)$ to be the solution to the ODE
\begin{equation}\label{PjPnqPLQ} \dot r_*(t)= \Psi(r_*(t))\end{equation}
 with initial datum $r_*(0)=r$. We fix $T>0$ such that $r_*(t)<r_0$ for all $t\in[0,T]$. 
Recalling the definition of $\Lambda$ in \eqref{LAMBDA DEF}, it is easy to check that \begin{equation}\label{8i6543i3}
\Lambda(r_*(t))=t+\Lambda(r),\qquad {\mbox{ for all }} t\in(0,T].
\end{equation}

Now, by~\eqref{rdit} and~\eqref{8i6543i3}, we see that
\begin{equation}\label{8ujn8uh6tg6tguh}
\Lambda(r_*(t))=\Lambda(r(t))+\Lambda(r)\ge\Lambda(r(t)).\end{equation}
Now, recalling the setting in~\eqref{PlIAK2dfgh93k},
we take into account the sets~${\mathcal{C}}_{r_*(t)}$
and~${\mathcal{C}}_{r_*(t)}^\lambda$,
with~$\lambda\in\left[0,\frac{r}{2}\right)$ and~$t\in [0,T]$,
and we claim that these sets
satisfy the assumptions in Corollary~\ref{unboundedcoro}, item ii).
To this end, 
we observe that, in the vicinity of the angular points of~${\mathcal{C}}_r$,
the complement of~${\mathcal{C}}_r$ is a convex set,
and therefore condition~\eqref{subgeo2} is satisfied by~${\mathcal{C}}_{r_*(t)}$.
Also, we take 
$$ 
\delta_1: =\inf_{t\in[0,T]} \Psi(r_*(t)),\qquad
\delta_2:=\inf_{t\in [0,T]} \inf_{p\in B_{3\sqrt{2}r_*(t)}}\int_{B_{r_*(t)/4}(3r_*(t)/4,0)-p} K(y)\,dy\qquad
{\mbox{and}}\qquad\delta:=\min\{\delta_1,\delta_2\}.$$
Notice that~$\delta>0$ thanks to \eqref{KERSI} and~\eqref{UNIMPIU}.
Then, by Proposition~\ref{PROP}
and~\eqref{PjPnqPLQ}, we get that at 
any point~$x=(x_1,x_2)$ of~$\partial{\mathcal{C}}_{r_*(t)}$
with~$x_2=\pm r_*(t)$, 
we have that
\begin{equation}\label{9UJA98765sdfgjnids}
-H^K_{ {\mathcal{C}}_{r_*(t)} }(x) \geq  2\Psi(r_*(t)) =
\dot r_*(t)+\Psi(r_*(t))
\geq \dot r_*(t) +\delta_1\geq \partial_t x\cdot \nu(x)+\delta.\end{equation}
In addition, if~$x=(x_1,x_2)\in(\partial{\mathcal{C}}_{r_*(t)})\cap B_{4R}$
and~$|x_2|> r_*(t)$, we
have that
\[-H^K_{ {\mathcal{C}}_{r_*(t)} }(x) \geq
-H^K_{ {\mathcal{C}} }(x)+\int_{B_{ r_*(t)/4}(3 r_*(t)/4,0)} K(y-x)\,dy=
\int_{B_{ r_*(t)/4}(3 r_*(t)/4,0)-x} K(y)\,dy\ge\delta_2\ge\delta=
\partial_t x\cdot \nu(x)+\delta.
\]
This and~\eqref{9UJA98765sdfgjnids} give that condition~\eqref{subgeo}
is fulfilled by~${\mathcal{C}}_{r_*(t)} $.

Furthermore, in light of Lemma~\ref{yg9uytfdiuye5t8wryegufgfiwefw},
we know that, for any~$x\in(\partial{\mathcal{C}}_{r_*(t)}^\lambda)
\setminus B_R$,
$$H^K_{{\mathcal{C}}_{r_*(t)}^\lambda}(p)\le0=\partial_t x\cdot \nu(x),$$
which says that condition~\eqref{zero1} is fulfilled
by~${{\mathcal{C}}_{r_*(t)}^\lambda}$.

Therefore, we are in the position of using
Corollary \ref{unboundedcoro}, item ii). In this way, we find that
\[
\mathcal{C}_{r_*(t)}\subseteq \mathcal{C}_r^+(t),\qquad {\mbox{ for all }} t\in [0,T].
\]
Hence, recalling~\eqref{8ujn8uh6tg6tguh},
\[
\mathcal{C}_{r(t)}\subseteq \mathcal{C}_r^+(t),\qquad {\mbox{ for all }} t\in [0,T].
\]
Taking intersections, in view of~\eqref{86969741221122222},
we obtain that
\begin{equation}\label{09}
\mathcal{C}_{r(t)}\subseteq \mathcal{C}^+(t),\qquad {\mbox{ for all }} t\in [0,T].
\end{equation}
Analogously, one can use the setting in~\eqref{Crgaue10}, combined
with Corollary \ref{unboundedcoro}, item i), and deduce that
\begin{equation}\label{099}
 \mathcal{C}^{r(t)}\subseteq (\R^2\setminus \mathcal{C})^+(t)\qquad {\mbox{ for all }} t\in[0, T]. 
\end{equation}
By \eqref{09} and \eqref{099} we get
\[  \left[-r(t), r(t)\right]^2=
\mathcal{C}_{r(t)}\cap \mathcal{C}^{r(t)}\subseteq
\mathcal{C}^+(t)\cap(\R^2\setminus \mathcal{C})^+(t)
=\Sigma_{\mathcal{C}}(t),\]
which implies~\eqref{9238475y6uedjidftgeri}, as desired.
\end{proof}

\section{Moving boxes, weak interaction kernels and proof of Theorem~\ref{NOFAT}}\label{secweak} 

To simplify some computation,
in this section we operate a rotation of coordinates so that
\begin{equation}\label{conobis}
\mathcal{C}=\{x\in\R^2\text{ s.t. } x_1x_2\geq 0\}.\end{equation} 
To prove Theorem~\ref{NOFAT}, it is convenient to consider ``expanding boxes''
built by the following sets. For any~$r\in(0,1)$, we define
\begin{equation}\label{ENNERER} {\mathcal{N}}_r:=\Big( [r,+\infty)\times[r,+\infty)\Big)\cup
\Big( (-\infty,-r]\times(-\infty,-r]\Big),\end{equation}
see Figure~\ref{D36475869073494994949za1}. 
 
\begin{figure}
    \centering
    \includegraphics[width=8cm]{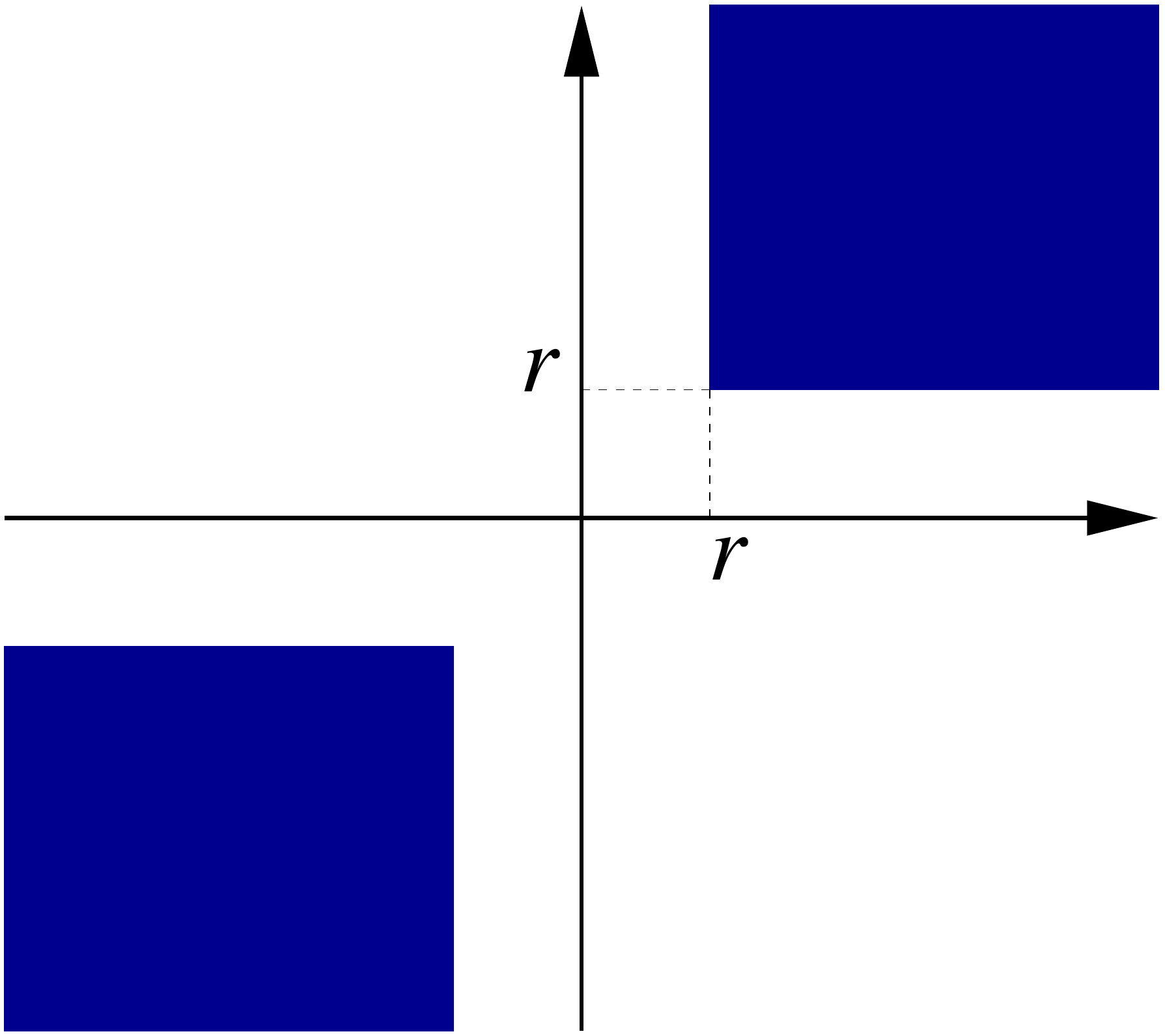}
    \caption{\em {{The set~${\mathcal{N}}_r$.}}}
    \label{D36475869073494994949za1}
\end{figure}

Then, recalling the notation in~\eqref{NOFHY}, we have:

\begin{lemma}\label{D36475869073494994949zaL}
Assume that~$K$ satisfies~\eqref{ROTAZION},
\eqref{ROTAZION2} and~\eqref{NOFHY} in $\R^2$. Then, for any~$p\in\partial{\mathcal{N}}_r$,
$$ H^K_{ {\mathcal{N}}_r }(p) \leq 2\,\Phi(2r).$$
\end{lemma}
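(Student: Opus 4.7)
The plan is to compare $\mathcal{N}_r$ with the translated cross $\mathcal{C}_* := \{x \in \mathbb{R}^2 : (x_1-r)(x_2-r) \geq 0\}$, which contains $\mathcal{N}_r$ and whose $K$-curvature vanishes at each of its boundary points by symmetry. First I would reduce to a single type of boundary point: the isometries $(x_1,x_2) \mapsto (x_2,x_1)$ and $(x_1,x_2) \mapsto -(x_1,x_2)$ leave $\mathcal{N}_r$ invariant and, by \eqref{ROTAZION}, preserve the $K$-curvature integrand, so it suffices to prove the bound at a point of the form $p = (r, y_0)$ with $y_0 \geq r$, lying on the vertical side of the upper-right ``L'' component of $\partial \mathcal{N}_r$.

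For such a $p$ I would observe that $p \in \partial \mathcal{C}_*$ and that the reflection $\sigma(x_1,x_2) := (2r-x_1, x_2)$ fixes $p$, preserves $B_\varepsilon(p)$, interchanges $\mathcal{C}_*$ with its complement modulo null sets, and satisfies $|p-\sigma(y)| = |p-y|$ by \eqref{ROTAZION}. Substituting $y \mapsto \sigma(y)$ in the principal-value integral defining $H^K_{\mathcal{C}_*}(p)$ forces it to equal its own negative, so $H^K_{\mathcal{C}_*}(p) = 0$. Since $\mathcal{N}_r \subseteq \mathcal{C}_*$, the pointwise identity
\[
\chi_{\mathbb{R}^2 \setminus \mathcal{N}_r} - \chi_{\mathcal{N}_r} = (\chi_{\mathbb{R}^2 \setminus \mathcal{C}_*} - \chi_{\mathcal{C}_*}) + 2\, \chi_{\mathcal{C}_* \setminus \mathcal{N}_r}
\]
together with the vanishing of $H^K_{\mathcal{C}_*}(p)$ would give
\[
H^K_{\mathcal{N}_r}(p) = 2 \int_{\mathcal{C}_* \setminus \mathcal{N}_r} K(p-y)\, dy,
\]
so the task reduces to bounding the last, honestly convergent integral by $\Phi(2r)$.

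To finish, I would decompose $\mathcal{C}_* \setminus \mathcal{N}_r = \{x_1 \leq r,\, x_2 \leq r\} \setminus (-\infty,-r]^2$ into a piece contained in the vertical strip $(-r, r] \times \mathbb{R}$ and a piece contained in the horizontal strip $\mathbb{R} \times (-r, r]$. After the change of variables $z = p - y$, the vertical-strip integral becomes $\int_{[0, 2r) \times \mathbb{R}} K_0(|z|)\, dz$, which by the $z_1 \mapsto -z_1$ symmetry of the integrand together with $K_0 \leq K_1$ from \eqref{NOFHY} is at most $\tfrac12 \Phi(2r)$. For the horizontal strip, the same substitution followed by the coordinate swap $z_1 \leftrightarrow z_2$ (which preserves $K_0(|z|)$) yields $\int_{[y_0 - r, y_0 + r) \times \mathbb{R}} K_0(|z|)\, dz$; since $y_0 \geq r$ this strip lies in $[0,\infty) \times \mathbb{R}$, and the monotonicity of $K_1$ in $|z|$ lets me shift it leftward to the origin and dominate it by $\int_{[0, 2r) \times \mathbb{R}} K_1(|z|)\, dz = \tfrac12 \Phi(2r)$. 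Adding the two contributions produces the desired bound $2\Phi(2r)$.

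The only subtle point I expect is the corner case $p = (r, r)$, where the singularity of $K(p-\cdot)$ touches the closure of the integration region. I do not think this causes real trouble: the ball $B_\varepsilon(p)$ remains $\sigma$-symmetric, so the antisymmetry cancellation yielding $H^K_{\mathcal{C}_*}(p) = 0$ operates for every $\varepsilon > 0$ before passing to the limit, while the remaining nonnegative integral is finite by \eqref{NOFHY} alone, and so no principal value needs to be invoked for it.
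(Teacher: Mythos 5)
Your proposal is correct, and it is built on the same core symmetry as the paper's argument — reflection across the vertical tangent line $\{x_1=r\}$ through $p$ — but you package the cancellation differently. The paper reflects the two components $A,B$ of $\mathcal{N}_r$ directly across that line, observes the reflected images $A',B'$ sit in the complement, and is left with $H^K_{\mathcal{N}_r}(p)=\int_T K(p-y)\,dy$ for the ``white region'' $T=(\R\times[-r,r])\cup([-r,3r]\times(-\infty,-r])$, then bounds each of those two strip-like pieces by $\Phi(2r)$. You instead pass through the translated cross $\mathcal{C}_*$ (a device very much in the spirit of the paper's own comparison of $\mathcal{C}_r$ with $\mathcal{D}_r$ in Lemma~\ref{PC=2}): the same reflection kills $H^K_{\mathcal{C}_*}(p)$, the inclusion $\mathcal{N}_r\subseteq\mathcal{C}_*$ gives the exact identity $H^K_{\mathcal{N}_r}(p)=2\int_{\mathcal{C}_*\setminus\mathcal{N}_r}K(p-y)\,dy$, and the discrepancy set $\mathcal{C}_*\setminus\mathcal{N}_r$ is the L-shaped region $\{x_1\le r,\,x_2\le r\}\setminus(-\infty,-r]^2$. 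Splitting it into the two legs, changing variables, swapping coordinates for the second leg, and using the monotonicity of $K_1$ to shift the far strip $[y_0-r,y_0+r)\times\R$ to $[0,2r)\times\R$ gives $\tfrac12\Phi(2r)$ for each, hence $2\Phi(2r)$ in total — the same bound via a slightly different decomposition. Two small stylistic remarks: the identity $|p-\sigma(y)|=|p-y|$ is just the fact that $\sigma$ is an isometry fixing $p$, not a consequence of~\eqref{ROTAZION} (that hypothesis is what converts equality of distances into equality of kernel values); and in the first leg you actually obtain $\int_{[0,2r)\times[y_0-r,\infty)}K_0$ before enlarging to $[0,2r)\times\R$, which is harmless but worth stating. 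Your treatment of the corner $p=(r,r)$ is exactly right: the reflection cancellation holds for every $\varepsilon>0$, and the remaining nonnegative integrand is integrable near $p$ because~\eqref{NOFHY} forces $K_0$ to be locally integrable in $\R^2$, so no principal value is needed for that term.
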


\begin{figure}
    \centering
    \includegraphics[width=8cm]{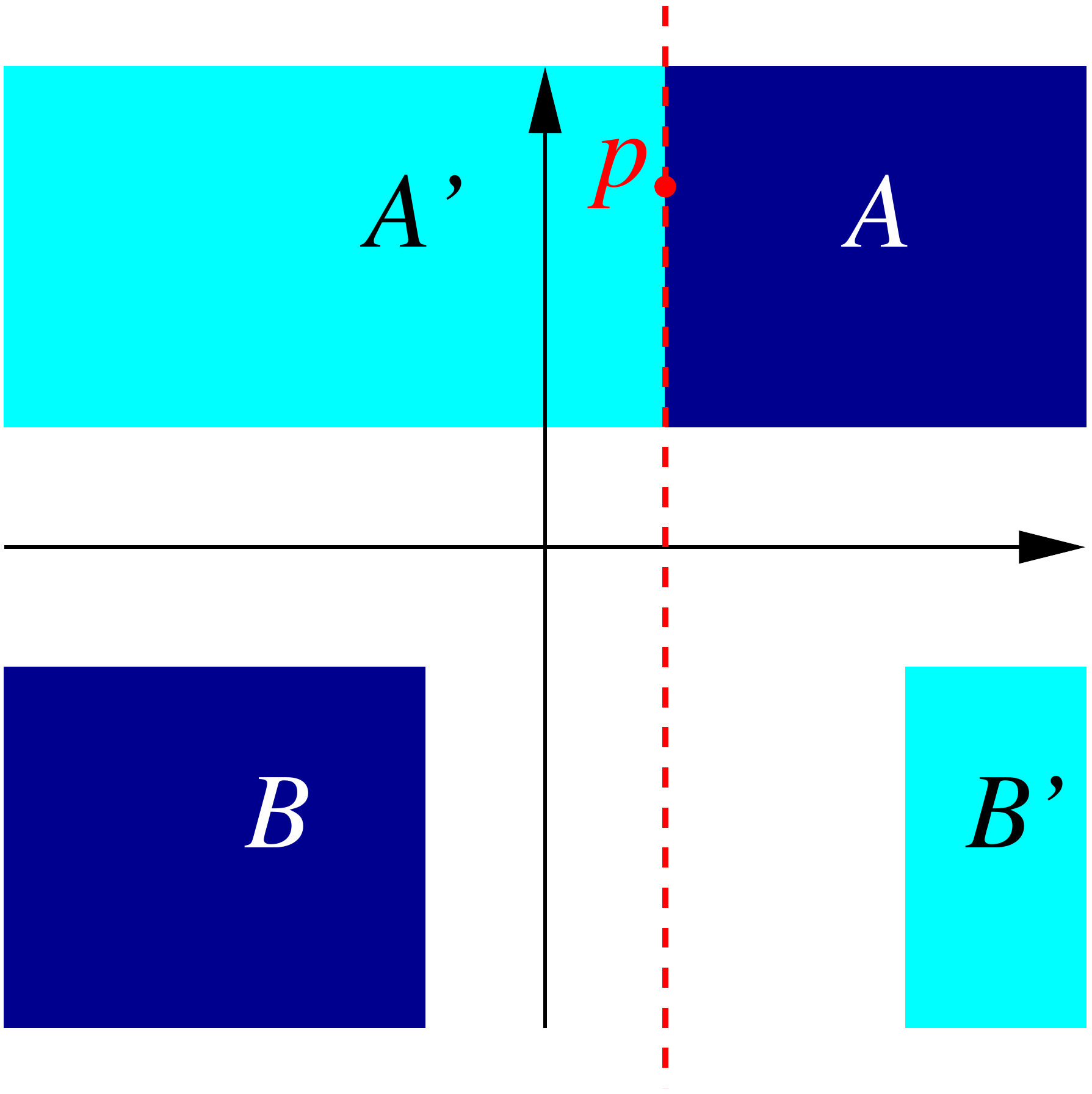}
    \caption{\em {{Simplifications in the computations of Lemma~\ref{D36475869073494994949zaL}.}}}
    \label{D36475869073494994949za22}
\end{figure}

\begin{proof} We denote by $A$ and~$B$ the two connected components of~${ {\mathcal{N}}_r }$
and consider the straight line~$\ell$ passing through~$p$ and tangent to~${ {\mathcal{N}}_r }$
at~$p$: see Figure~\ref{D36475869073494994949za22}. By reflection across~$\ell$,
we can consider the regions~$A'$ and~$B'$ which are symmetric to~$A$ and $B$, respectively.
In particular, if~$p=(p_1,p_2)$ and~$M(x_1,x_2):=(2p_1-x_1,x_2)$,
we have that~$M(A\cup B)=A'\cup B'$ and~$M(B_\e(p))=B_\e(p)$, and therefore
\begin{eqnarray*} &&\int_{(A'\cup B')\setminus B_\e(p)} K(p-y)\,dy=
\int_{M((A\cup B)\setminus B_\e(p))} K(p-y)\,dy=
\int_{M((A\cup B)\setminus B_\e(p))} K(p-Mx)\,dx\\&&\qquad=
\int_{M((A\cup B)\setminus B_\e(p))} K(-p_1+x_1,p_2-x_2)\,dx
=\int_{(A'\cup B')\setminus B_\e(p)} K(p-x)\,dx,\end{eqnarray*}
thanks to~\eqref{ROTAZION}. Then, denoting by
$$T:=\big( \R^2\setminus{ {\mathcal{N}}_r }\big)\setminus \big(A'\cup B'\big),$$
which is the ``white region'' in Figure~\ref{D36475869073494994949za22}, we see that
\begin{equation}\label{WHI}
\begin{split}
& H^K_{ {\mathcal{N}}_r }(p) = \lim_{\e\searrow0}
\int_{(A'\cup B')\setminus B_\e(p)} K(p-x)\,dx
-\int_{(A\cup B)\setminus B_\e(p)} K(p-x)\,dx+
\int_{T} K(p-x)\,dx\\&\qquad\qquad\qquad= \int_{T} K(p-x)\,dx.\end{split}
\end{equation}
Up to rotations, we may assume that
\begin{equation}\label{05yhbnmcvjdf0}
T= \big( \R\times[-r,r]\big)\cup \big( [-r,3r]\times (-\infty,-r]\big).\end{equation}
Recalling ~\eqref{NOFHY}, and that $p_1=r$, we get 
\begin{equation}\label{05yhbnmcvjdf}
\int_{[-r,3r]\times (-\infty,-r]} K(x-p)\,dx\leq
\int_{[-r,3r]\times (-\infty,-r]} K_1(|x-p|)\,dx\leq
\int_{[-r,3r]\times \R} K_1(|x-p|)\,dx =
\Phi(2r)\end{equation}
where $\Phi$ is defined in \eqref{NOFHY}.
Moreover, since $p_1=r$ and $p_2\geq r$, and $K_1$ is nonincreasing,
we get that $K_1(|x-p|)\leq K_1(|x-(r,r)|)$,
for every $x\in \R\times[-r,r]$. As consequence,
\begin{eqnarray*}
&& \int_{\R\times [-r,r]} K(x-p)\,dx\leq  \int_{\R\times [-r,r]}  K_1(|x-p|)\,dx\\&&\qquad
\leq \int_{\R\times [-r,r]}  K_1(|x-(r,r)|)\,dx \leq \int_{\R\times [-r,3r]} K_1(|x-(r,r)|)\,dx= 
\Phi(2r).\end{eqnarray*}
{F}rom this and~\eqref{05yhbnmcvjdf}, and recalling~\eqref{05yhbnmcvjdf0},
we obtain that
$$\int_{T} K(p-x)\,dx\leq 2\Phi(2r).$$
This and~\eqref{WHI} give the desired result.
\end{proof}


\medskip 

For $\lambda\in (0,r)$ we define the sets 
\begin{equation}\label{defNrla}
{\mathcal{N}}_{r}^\lambda:=\{
x\in\R^2 {\mbox{ s.t. }} d_{ {\mathcal{N}}_{r} }(x)\ge-\lambda
\}.
\end{equation}
We observe that for any~$x\in \partial{\mathcal{N}}_{r}^\lambda$ there exists
a unique point $x'\in\partial{\mathcal{N}}_{r}$ such that 
$|x-x'|=d(\mathcal{N}_{r}^\lambda,\mathcal{N}_{r})=\lambda$.
Letting $v_x:=x-x'$, it follows that~${\mathcal{N}}_{r}+v_x\subset{\mathcal{N}}_{r}^\lambda$,
see Figure~\ref{4567PO52367485M4}.
This and Lemma~\ref{D36475869073494994949zaL} give that
\begin{equation}\label{eqHKla}
H^K_{ {\mathcal{N}}_{r}^\lambda }(x)
\le H^K_{ {\mathcal{N}}_{r} }(x+v_x)\le 2\Phi(2r)\qquad
\text{for any $x\in  \partial{\mathcal{N}}_{r}^\lambda$.}
\end{equation}

\begin{figure}
    \centering
    \includegraphics[width=8cm]{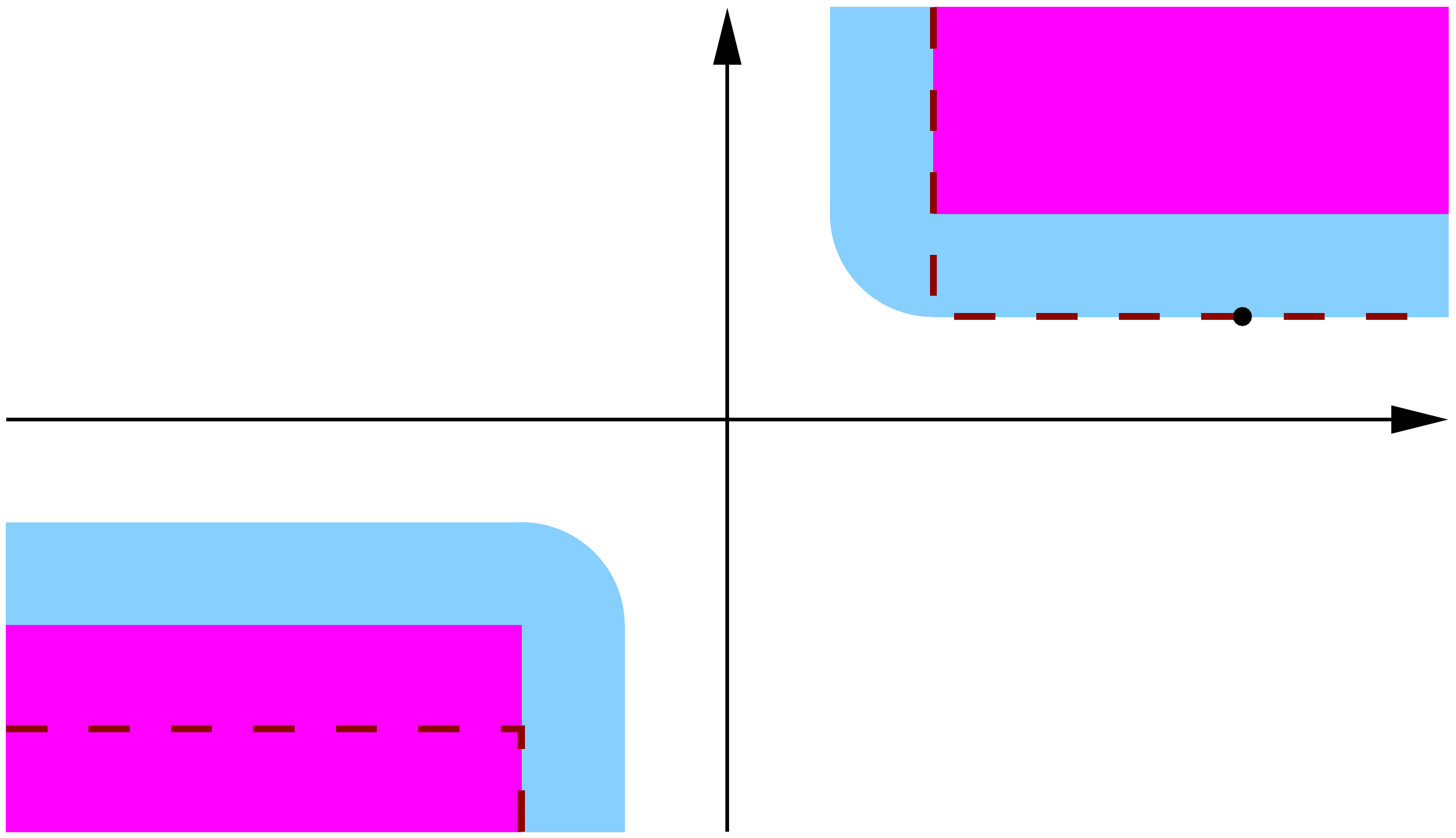}
    \caption{\em {{The set~${\mathcal{N}}_r^\lambda$, touched from inside at a boundary point by a translation of~${\mathcal{N}}_r$.}}}
    \label{4567PO52367485M4}
\end{figure}

\medskip

With this preliminary work, we can prove Theorem~\ref{NOFAT}.

\begin{proof}[Proof of Theorem~\ref{NOFAT}] 
We note that ${\mathcal{M}}_{r}:={\mathcal{N}}_{r}^{r/2}\subseteq \mathcal{C}$,
being~$\mathcal{C}$ defined in~\eqref{conobis}
and~${\mathcal{N}}_{r}^{r/2}$ defined in~\eqref{defNrla}, with $\lambda=r/2$.
Moreover, we have that~$d(\mathcal{C},{\mathcal{M}}_r)=r/2>0$.
Hence, by Corollary~\ref{cpgeometric} 
we get that~$ {\mathcal{M}}_{r}^+(t)\subseteq \mathcal{C}^-(t)$ for all $t>0$.
In particular, since
$$\bigcup_{r>0} \mathcal{M}_r={\rm{int}}(\mathcal{C}),$$ 
we see that
\begin{equation}\label{unonr} 
\bigcup_{r>0}  {\mathcal{M}}_{r}^+(t)= \mathcal{C}^-(t).
\end{equation}
Our aim is to construct starting from ${ {\mathcal{M}}_{r} }$ a continuous family of geometric subsolutions and then apply Proposition \ref{subgeometrico}.
Fixed~$\varrho\in(0,1)$, we define
$$ F_\varrho(r):=\int_\varrho^r \frac{d\vartheta}{6\Phi(2\vartheta)}.$$ 
Notice that~$F_\varrho$ is strictly increasing, so we can consider its inverse~$G_\varrho$
in such a way that~$F_\varrho(G_\varrho(t))=t$. Then, for~$t\in [0, T]$, we set~$
r_\varrho (t):= G_\varrho (t)$  and we consider the evolving sets ~$
{ {\mathcal{M}}_{r_\varrho(t)} }$.  We remark that
$$ F_\varrho(\varrho)=0=F_\varrho(G_\varrho(0))=F_\varrho(r_\varrho(0)),$$
and so~$r_\varrho(0)=\varrho$. In addition,
the outer  normal velocity of~${ {\mathcal{M}}_{r_\varrho(t)} }$
is
\begin{equation}\label{58998u7y669dcu}
-\dot r_\varrho(t)+ \frac{1}{2}\dot r_\varrho(t)=-\frac{1}{2}G_\varrho '(t)=-\frac1{2F_\varrho'(G_\varrho(t))}=-3
{\,\Phi(2G_\varrho(t))}=-3{\,\Phi(2r_\varrho(t))}.\end{equation}
So, if $$\delta:=\Phi(2\varrho)=\min_{r\in [\varrho, r_\varrho(T)]} \Phi(2r),$$ we have that 
\begin{equation}\label{uuu} \partial_t x\cdot \nu(x)=-\frac{1}{2}\dot r_\varrho(t)=-2\Phi(2r_\varrho(t)) - \Phi(2r_\varrho(t))\leq  - H^K_{\mathcal{M}_{r_\varrho(t)}}(x) -\delta\end{equation} 
for all $x\in \partial \mathcal{M}_{r_\varrho(t)}$,
thanks to \eqref{eqHKla}.

We observe that~\eqref{uuu} says that~\eqref{subgeo}
is satisfied by~$\mathcal{M}_{r_\varrho(t)}$.
So, to exploit Corollary~\ref{unboundedcoro},
we now want to check that condition~\eqref{zero1}
is satisfied by the set
$$ {\mathcal{M}}_{r_\varrho(t)}^\lambda:=\{
x\in\R^2 {\mbox{ s.t. }} d_{ {\mathcal{M}}_{r_\varrho(t)} }(x)\ge-\lambda
\}\qquad\mbox{ for $\lambda\in (0, \rho)$}.$$  We exploit again the estimate \eqref{eqHKla} which gives that 
\[H^K_{ {\mathcal{M}}_{r_\varrho(t)}^\lambda}(x)
\le 2\Phi(2r_{\varrho}(t))\qquad
\text{for any $x\in  \partial{\mathcal{M}}_{r_\varrho(t)}^\lambda$.} \]
Thus, in view of~\eqref{58998u7y669dcu},
$$ \partial_t x\cdot \nu(x)=-\frac{1}{2}\dot r_\varrho(t)=
-3{\,\Phi(2r_\varrho(t))}\le-H^K_{ {\mathcal{M}}_{r_\varrho(t)}^\lambda }(x).$$
This gives that~${\mathcal{M}}_{r_\varrho(t)}^\lambda $ satisfies
condition~\eqref{zero1} and therefore we can  apply   Corollary~\ref{unboundedcoro}, item ii). 

Then, it follows that, for all $\varrho\in (0,1)$, 
\begin{equation}\label{trenr} {\mathcal{M}}_{r_\varrho(t)}\subseteq {\mathcal{M}}_{\varrho}^+(t). \end{equation}
Also, for any~$t>0$, we claim that
\begin{equation}\label{923rtrsDEDuu}
\lim_{\varrho\searrow0} r_\varrho(t)=0.
\end{equation}
To prove this, we argue by contradiction and suppose that~$r_{\varrho_k}(t)\ge a_0$,
for some~$a_0>0$ and some infinitesimal sequence~$\varrho_k$. Then,
$$ t=F_{\varrho_k}(G_{\varrho_k}(t))=
F_{\varrho_k}(r_{\varrho_k}(t))\ge F_{\varrho_k}(a_0)
=\int_{\varrho_k}^{a_0} \frac{d\vartheta}{6\,\Phi(2\vartheta)}=\frac{1}{12}
\int_{2\varrho_k}^{2a_0} \frac{d\tau}{\Phi(\tau)}.
$$
This is in contradiction with~\eqref{HIG} and so it proves~\eqref{923rtrsDEDuu}.

In view of \eqref{923rtrsDEDuu}, we find that
$$\bigcup_{\varrho>0} {\mathcal{M}}_{r_\varrho(t)}=\text{int } \mathcal{C}.$$ 
So, recalling \eqref{unonr} and \eqref{trenr},
we conclude that
\begin{equation}\label{08678943899w8e8hx8w:19231}
\text{int } \mathcal{C} =\bigcup_{\varrho>0} {\mathcal{M}}_{r_\varrho(t)}\subseteq
\bigcup_{\varrho>0}  {\mathcal{M}}_{\varrho}^+(t)= \mathcal{C}^-(t)\qquad
{\mbox{for all }}t\in [0,T]. \end{equation}
Analogously, 
one can define
$$ {\mathcal{N}}^{r}:=\Big( (-\infty, -r]\times[r,+\infty)\Big)\cup
\Big( [r+\infty)\times(-\infty,-r]\Big), \qquad \mathcal{M}^r=({\mathcal{N}}^{r})^{r/2}:=\{
x\in\R^2 {\mbox{ s.t. }} d_{ {\mathcal{N}}^{r} }(x)\ge-\lambda
\}.
 $$
and see that
\begin{equation}\label{08678943899w8e8hx8w:19232}\text{int } (\R^2\setminus \mathcal{C})\subseteq \R^2\setminus
\mathcal{C}^-(t)\qquad {\mbox{for all }}t\in [0,T]. \end{equation}
Putting together \eqref{08678943899w8e8hx8w:19231}
and~\eqref{08678943899w8e8hx8w:19232}, we conclude that 
\[ \text{int } \mathcal{C} \subseteq  \mathcal{C}^-(t)\subseteq \mathcal{C}^+(t)
\subseteq \mathcal{C},\] and 
so $\Sigma_{\mathcal{C}}(t)=\partial\mathcal{C}$, thus establishing~\eqref{8G}. 
\end{proof}

\section{$K$-minimal cones and proof of Proposition \ref{mincroce}} \label{secmin} 

In this section we show  that~$\mathcal{C}\subseteq \R^2$, as defined in~\eqref{CROSS}, is never
a $K$-minimal set, under the assumptions~\eqref{ROTAZION}
and~\eqref{ROTAZION2}, namely we prove Proposition~\ref{mincroce}. 
This will be proved using the family of perturbed crosses~$\mathcal{C}_r$
introduced in~\eqref{PCROSS} and  
the fact that $H^K_E$ is the first variation of the
nonlocal perimeter $\Per_K$ defined in~\eqref{per},
as shown in \cite{MR3401008}. 
%

\begin{proof}[Proof of Proposition \ref{mincroce}]
With the notation in~\eqref{CROSS} and~\eqref{PCROSS},
we claim that that there exists~$r>0$ such that,
for all~$R>\sqrt{2}\,r$, 
\begin{equation}\label{claim}\Per_K(\mathcal{C}_{r}, B_R)
< \Per_K(\mathcal{C}, B_R). \end{equation}
Let $r>0$ and $R>\sqrt{2}r$, so that $ \mathcal{C}_{r}\setminus B_R= \mathcal{C}\setminus B_R$. 
Let \[W_r:= \mathcal{C}_{r}\setminus \mathcal{C} \subseteq B_R.  \]
Let $\delta\in(0, r)$ and $K_\delta(y):=K(y)(1-\chi_{B_\delta}(y))$.
We define
$\Per_\delta(E)$ as in \eqref{per},
$\Per_\delta(E,B_R)$ as in \eqref{locper},
and~$H^\delta_E$ as in~\eqref{CURVA}, with $K_\delta$ in place of $K$. 
In this setting, we get that 
\begin{equation}\label{perw}   \Per_{\delta} (W_r)=\Per_\delta(W_r, B_R)=
\Per_\delta(\mathcal{C}_{r}, B_R) -\Per_\delta
(\mathcal{C}, B_R)+2\int_{W_r}\int_{\mathcal{C}}K_\delta(x-y)\,dx\,dy.\end{equation} 
We also observe that
\begin{equation*} 
\Per_{\delta} (W_r)= \int_{W_r}\int_{\R^2\setminus W_r} K_\delta(x-y)\,dx\,dy
=  \int_{W_r}\int_{\R^2\setminus \mathcal{C}_{r}} K_\delta(x-y)\,dx\,dy
+ \int_{W_r}\int_{ \mathcal{C}} K_\delta(x-y)\,dx\,dy.
\end{equation*}
Substituting this identity into~\eqref{perw}, we find that
\begin{equation}\label{PLPLPLPAKJerUJS}\begin{split}
\Per_\delta(\mathcal{C}_{r}, B_R) -\Per_\delta
(\mathcal{C}, B_R)\,&=\Per_{\delta} (W_r)-
2\int_{W_r}\int_{\mathcal{C}}K_\delta(x-y)\,dx\,dy\\
&=\int_{W_r}\int_{\R^2\setminus \mathcal{C}_{r}} K_\delta(x-y)\,dx\,dy
-\int_{W_r}\int_{ \mathcal{C}} K_\delta(x-y)\,dx\,dy.
\end{split}\end{equation}
Now, given $x=(x_1,x_2)\in W_r$, we have that~$x\in \partial 
\mathcal{C}_{r(x)}$, with~$r(x):=|x_2|\in(0,r]$,
where the notation of~\eqref{PCROSS} has been used.
Then, by Lemma \ref{PC=2}, we have that 
\begin{equation}\label{curvw}
H^\delta_{\mathcal{C}_{r(x)}}(x)\leq -2\Psi_\delta(r(x)),\end{equation} 
where $\Psi_\delta$ is as in \eqref{defpsi} with $K_\delta$ in place of $K$, that is 
$$\Psi_\delta(s):=\int_{B_{s/4}(7s/4,0)} K_\delta(x)\,dx\geq 0.$$
We write~\eqref{curvw} as
\begin{equation*}\begin{split}
-2\Psi_\delta(r(x))\,&\ge
\int_{\R^2\setminus \mathcal{C}_{r(x)}} K_\delta(x-y)\,dy-
\int_{\mathcal{C}_{r(x)}} K_\delta(x-y)\,dy\\
&=\int_{\R^2\setminus \mathcal{C}_{r(x)}} K_\delta(x-y)\,dy-
\int_{\mathcal{C}} K_\delta(x-y)\,dy-
\int_{W_{r(x)}} K_\delta(x-y)\,dy\\
&=\int_{\R^2\setminus \mathcal{C}_{r}} K_\delta(x-y)\,dy
+\int_{W_r\setminus W_{r(x)}}K_\delta(x-y)\,dy
-
\int_{\mathcal{C}} K_\delta(x-y)\,dy-
\int_{W_{r(x)}} K_\delta(x-y)\,dy.\end{split}
\end{equation*}
Therefore, integrating over~$x\in W_r$,
\begin{equation}\label{g5GA7hAU9jsdHA}\begin{split}&
\int_{W_r}\int_{\R^2\setminus \mathcal{C}_{r}} K_\delta(x-y)\,dx\,dy-\int_{W_r}
\int_{\mathcal{C}} K_\delta(x-y)\,dx\,dy\\ \le\;&
\int_{W_r}\int_{W_{r(x)}} K_\delta(x-y)\,dx\,dy-
\int_{W_r}\int_{W_r\setminus W_{r(x)}}K_\delta(x-y)\,dx\,dy
-2\int_{W_r}\Psi_\delta(r(x))\,dx\\
=\;&
2\int_{W_r}\int_{W_{r(x)}} K_\delta(x-y)\,dx\,dy-
\int_{W_r}\int_{W_r}K_\delta(x-y)\,dx\,dy
-2\int_{W_r}\Psi_\delta(r(x))\,dx
.\end{split}\end{equation}
We now observe that
$$W_r=\{x\in\R^2 {\mbox{ s.t. }}|x_2|>|x_1|
{\mbox{ and }} |x_2|<r\},$$
and thus
\begin{eqnarray*}&&
2\int_{W_r}\int_{W_{r(x)}} K_\delta(x-y)\,dx\,dy\\ &=&
\int_{x\in W_r}\left(\int_{y\in W_{r(x)}} K_\delta(x-y)\,dy\right)\,dx
+\int_{y\in W_r}\left(\int_{x\in W_{r(y)}} K_\delta(x-y)\,dx\right)\,dy\\
&=& \int_{\{ |x_1|<|x_2|<r\}}\left(\int_{
\{ |y_1|<|y_2|<r(x)\}
} K_\delta(x-y)\,dy\right)\,dx+
\int_{\{ |y_1|<|y_2|<r\}}\left(\int_{
\{ |x_1|<|x_2|<r(y)\}
} K_\delta(x-y)\,dx\right)\,dy\\
&=& \int_{\{ |x_1|<|x_2|<r\}}\left(\int_{
\{ |y_1|<|y_2|<|x_2|\}
} K_\delta(x-y)\,dy\right)\,dx+
\int_{\{ |y_1|<|y_2|<r\}}\left(\int_{
\{ |x_1|<|x_2|<|y_2|\}
} K_\delta(x-y)\,dx\right)\,dy\\
&=& \int_{\{ |x_1|<|x_2|<r\}}\left(\int_{
\{ |y_1|<|y_2|<|x_2|\}
} K_\delta(x-y)\,dy\right)\,dx+
\int_{\{ |x_1|<|x_2|<r\}}\left(\int_{
\{\max\{|y_1|,|x_2|\}<|y_2|<r\}
} K_\delta(x-y)\,dy\right)\,dx\\
&=&\int_{\{ |x_1|<|x_2|<r\}}\left(\int_{
|y_1|<|y_2|<r\}
} K_\delta(x-y)\,dx\right)\,dy.
\end{eqnarray*}
Hence, plugging this information into~\eqref{g5GA7hAU9jsdHA},
we conclude that
$$ \int_{W_r}\int_{\R^2\setminus \mathcal{C}_{r}} K_\delta(x-y)\,dx\,dy-\int_{W_r}
\int_{\mathcal{C}} K_\delta(x-y)\,dx\,dy\le
-2\int_{W_r}\Psi_\delta(r(x))\,dx.$$
This and~\eqref{PLPLPLPAKJerUJS} give that
\begin{equation}\label{WrU7u7yehasddw}
\Per_\delta(\mathcal{C}_{r}, B_R) -\Per_\delta
(\mathcal{C}, B_R)\le-2\int_{W_r}\Psi_\delta(r(x))\,dx.
\end{equation}
Now, as $\delta\searrow 0$, we have that
$\Per_\delta(\mathcal{C}_{r}, B_R)\to \Per_K(\mathcal{C}_{r}, B_R)$ and
$\Per_\delta(\mathcal{C}, B_R)\to \Per_K(\mathcal{C}, B_R)$,
by Dominated Convergence Theorem,  see \cite{MR3401008}.   
Moreover, $\Psi_\delta(s)\to \Psi(s)=\int_{B_{s/4}(7s/4,0)} K(x)\,dx$
a.e. and in $L^1(0,1)$ by Dominated Convergence Theorem
(observe that $\Psi\in L^1(0,1)$ by assumption~\eqref{ROTAZION2}). 

So, letting $\delta\searrow 0$ in \eqref{WrU7u7yehasddw}, we end up with
\begin{equation} \label{9ik8ij9ik9iw9sidjhwihfhd}\Per_K(\mathcal{C}_{r}, B_R) -\Per_K(\mathcal{C}, B_R)\le
-2 \int_{W_r}\Psi (|x_2|) \,dx.\end{equation}
Recalling that $K$ is not identically zero, we take a Lebesgue point~$\tau_0\in(0,+\infty)$
such that~$K_0(\tau_0)>0$.
Then,
$$ \lim_{\e\searrow0}\frac1{2\e}\int_{\tau_0-\e}^{\tau_0+\e} K_0(\tau)\,d\tau=K_0(\tau_0)>0.$$
Consequently, we take~$\epsilon_0>0$ such that for all~$\epsilon\in(0,\epsilon_0]$ we have that
\begin{equation}\label{PALJA82uwshGAUw98di}
\int_{\tau_0-\e}^{\tau_0+\e} K_0(\tau)\,d\tau\ge \e K_0(\tau_0).\end{equation}
Then, if~$\bar\epsilon:=\min\left\{\epsilon_0,\frac{\tau_0}{100}\right\}$
and~$
r\in \left[ \frac{4\tau_0}{7}-\frac{\bar\epsilon}{14},
\frac{4\tau_0}{7}+\frac{\bar\epsilon}{14}\right]$,
we have that
\begin{equation}\label{9ikhHHSisdjjdjdj}
\begin{split}& \frac{7r}{4}+\frac{r}{8}=
\frac{15r}{8}\ge \frac{15\tau_0}{14}-\frac{15\bar\epsilon}{112}
\ge\tau_0+\bar\epsilon
\\{\mbox{and }}\;&
\frac{7r}{4}-\frac{r}{8}=\frac{13 r}{8}\le
\frac{13\tau_0}{14}+\frac{13\bar\epsilon}{112}
\le\tau_0-\bar\epsilon.
\end{split}\end{equation}
Now we cover the ring~$A_r:=B_{({7r}/{4})+({r}/{8})}\setminus
B_{({7r}/{4})-({r}/{8})}$ by $N_0$ balls of radius~$r/4$ centered at~$\partial B_{7r/4}$,
with~$N_0$ independent of~$r$. Then
\begin{eqnarray*} &&
\frac{13\pi r}{4}\int_{({7r}/{4})-({r}/{8})}^{({7r}/{4})+({r}/{8})} 
K_0(\tau)\,d\tau\le
2\pi\int_{({7r}/{4})-({r}/{8})}^{({7r}/{4})+({r}/{8})} \tau\,K_0(\tau)\,d\tau\\&&\qquad=
\int_{ A_r } K_0(|x|)\,dx\le N_0\,
\int_{B_{r/4}(7r/4,0)} K_0(|x|)\,dx=N_0\,\Psi(r),\end{eqnarray*}
thanks to~\eqref{defpsi}.

Using this, \eqref{PALJA82uwshGAUw98di}
and~\eqref{9ikhHHSisdjjdjdj}, we obtain that,
for any~$r\in \left[ \frac{4\tau_0}{7}-\frac{\bar\epsilon}{14},
\frac{4\tau_0}{7}+\frac{\bar\epsilon}{14}\right]$,
\begin{equation}\label{7hkajsdhfgH}\begin{split}
\Psi(r) \,&\ge \frac{13\pi r}{4N_0}\int_{({7r}/{4})-({r}/{8})}^{({7r}/{4})+({r}/{8})} 
K_0(\tau)\,d\tau\\
&\ge \frac{\tau_0}{4N_0}\int_{\tau_0-\bar\epsilon}^{\tau_0+\bar\epsilon} 
K_0(\tau)\,d\tau\\
&\ge \frac{\bar\e\tau_0\,K_0(\tau_0)}{4N_0}\\&=:\bar c.
\end{split}\end{equation}
Then, if~$r_0:=\frac{4\tau_0}{7}+\frac{\bar\epsilon}{14}$,
we have that
$$ W_{r_0}\supset \left(0,\frac{4\tau_0}{7}-\frac{\bar\epsilon}{14}\right)\times
\left(\frac{4\tau_0}{7}-\frac{\bar\epsilon}{14},
\frac{4\tau_0}{7}+\frac{\bar\epsilon}{14}\right)$$
and therefore
\begin{eqnarray*}
&& \int_{W_{r_0}}\Psi (|x_2|) \,dx\ge
\left(\frac{4\tau_0}{7}-\frac{\bar\epsilon}{14}\right)\,\int_{
\frac{4\tau_0}{7}-\frac{\bar\epsilon}{14}}^{
\frac{4\tau_0}{7}+\frac{\bar\epsilon}{14}} \Psi(x_2)\,dx_2\ge
\frac{\bar c\,\bar\e}{7}\;
\left(\frac{4\tau_0}{7}-\frac{\bar\epsilon}{14}\right),
\end{eqnarray*}
where~\eqref{7hkajsdhfgH} has been used in the last inequality.
In particular,
$$ \int_{W_{r_0}}\Psi (|x_2|) \,dx>0,$$
which combined with~\eqref{9ik8ij9ik9iw9sidjhwihfhd}
implies that
claim in~\eqref{claim} with~$r:=r_0$.

Then, in light of~\eqref{claim}, we get that 
$\mathcal{C}$ is not a $K$-minimal set, thus completing the proof
of Proposition \ref{mincroce}.
 \end{proof} 

\section{Strictly starshaped domains and proof of Proposition \ref{strictstar}} \label{secstar} 
\begin{proof}[Proof of Proposition \ref{strictstar}]
We observe that, due to assumption in \eqref{strictstar1}, for every $\lambda>0$, we have that
there exists $\delta_\lambda>0$ such that the distance
between~$\partial E$ and~$\partial(\lambda E)$
is at least~$\delta_\lambda$. Therefore, for any $\lambda>1$, 
from Corollary \ref{cpgeometric} and Lemma \ref{A-102-B}, we deduce that 
\[E^+(\lambda^{1+s}t)\subseteq E^-_\lambda(\lambda^{1+s}t)= \lambda E^-\left(t\right). \]
Then for $\lambda>1$, 
\[|{\rm{int}} (E^+(t))\setminus \overline{E^-(t)}|\leq |{\rm{int}} (E^+(t))\setminus \lambda^{-1} E^+(\lambda^{1+s}t)|=|{\rm{int}} (E^+(t))|-\lambda^{-1} | E^+(\lambda^{1+s}t)|. \]
Also, by Proposition \ref{scs}, $$
\liminf_{\lambda\searrow 1}  | E^+(\lambda^{1+s}t)|\geq |{\rm{int}} (E^+(t))|.$$ 
Therefore we get 
\[|{\rm{int}} (E^+(t))\setminus \overline{E^-(t)}|\leq \limsup_{\lambda\searrow 1}|{\rm{int}} (E^+(t))|-\lambda^{-1} | E^+(\lambda^{1+s}t)|=
|{\rm{int}} (E^+(t))|-\liminf_{\lambda\searrow 1} \lambda^{-1} | E^+(\lambda^{1+s}t)| \leq 0.\] 
This gives the desired statement. \end{proof}

\section{Perturbed double droplet
and proof of Theorem~\ref{FAT3}}\label{secdoppiagoccia} 
In this section, the state space is $\R^2$. 
Recalling the notation in~\eqref{DDR}, given~$r\in\left(0,\frac12\right)$ we set
\begin{equation}\label{DUPA} {\mathcal{G}}_r:= [-r,r]^2\cup {\mathcal{G}}_0\subseteq \R^2,\end{equation}
where ~${\mathcal{G}}_0$
is the union in $\R^2$ of~${\mathcal{B}}^{+}$, which is the convex envelope between~$B_1(\sqrt2,0)$
and the origin, and~${\mathcal{B}}^{-}$, which is the convex envelope between~$B_1(-\sqrt2,0)$
and the origin, see  
Figure~\ref{P8234}. 

\begin{figure}
    \centering
    \includegraphics[width=8cm]{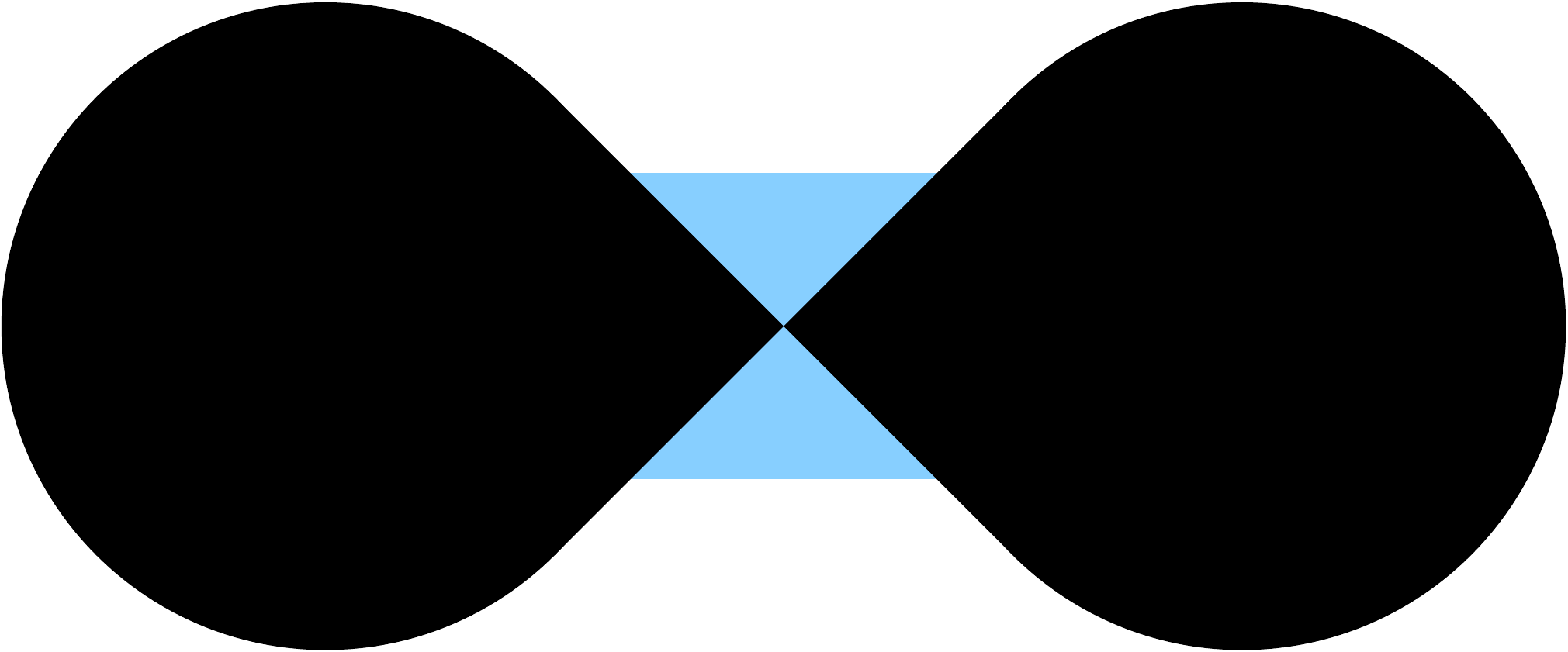}
    \caption{\em {{The set~${\mathcal{G}}_r$.}}}
    \label{P8234}
\end{figure}

Now, fixed~$\delta\in(0,r)$, we denote
by~${\mathcal{B}}_{\delta}^+$
the convex envelope between~$B_{1-\delta}(\sqrt2,0)$
and the origin, and~${\mathcal{B}}^{-}_\delta$
the convex envelope between~$B_{1-\delta}(-\sqrt2,0)$
and the origin. We let
$$ {\mathcal{G}}_{\delta,r}:=\big([-2r,2r]\times[-r,r]\big)\cup{\mathcal{B}}_{\delta}^+\cup{\mathcal{B}}_{\delta}^-.$$
Then we can estimate the $K$-curvature of~${\mathcal{G}}_{\delta,r}$ as follows:

\begin{lemma}\label{9rierhgj4848dfg85}
Assume that~\eqref{ROTAZION},
\eqref{ROTAZION2} and~\eqref{NUCLEONORMA} hold true in $\R^2$. Then,
there exists~$c_\sharp\in(0,1)$ such that the following statement holds true.
If~$r\in(0,c_\sharp)$ and~$\delta\in(0,c_\sharp^4 r)$, then
\begin{equation}\label{G021} H^s_{ {\mathcal{G}}_{\delta,r} }(p)\le\frac{1}{c_\sharp}\end{equation}
for any~$p\in\partial{ {\mathcal{G}}_{\delta,r} }$. In addition,
for any~$p\in(\partial{ {\mathcal{G}}_{\delta,r} })\cap([-2r,2r]\times[-r,r])$,
\begin{equation}\label{G022}
H^s_{ {\mathcal{G}}_{\delta,r} }(p)\le-\frac{c_\sharp}{r^s}.\end{equation}
\end{lemma}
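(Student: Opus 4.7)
The idea is to compare $\mathcal{G}_{\delta,r}$ with the perturbed cross $\mathcal{C}_r=[-r,r]^2\cup\mathcal{C}$ from Lemma~\ref{PC=2}. The containment $\mathcal{G}_{\delta,r}\subseteq\mathcal{C}_r$ holds because: (i) the tangent line from the origin to $B_{1-\delta}(\pm\sqrt2,0)$ makes angle $\theta$ with the horizontal satisfying $\sin\theta=(1-\delta)/\sqrt2\le 1/\sqrt2$, so $\theta\le\pi/4$ and therefore $\mathcal{B}_\delta^\pm\subseteq\mathcal{C}$; and (ii) each point of $R:=[-2r,2r]\times[-r,r]$ with $|y_1|>r$ satisfies $|y_1|>r\ge|y_2|$, hence lies in $\mathcal{C}$. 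Combining these, $R\cup\mathcal{B}_\delta^+\cup\mathcal{B}_\delta^-\subseteq[-r,r]^2\cup\mathcal{C}=\mathcal{C}_r$. This yields the principal-value identity
\begin{equation*}
H^s_{\mathcal{G}_{\delta,r}}(p)=H^s_{\mathcal{C}_r}(p)+2\int_{\mathcal{C}\setminus(R\cup\mathcal{B}_\delta^+\cup\mathcal{B}_\delta^-)}K(p-y)\,dy.
\end{equation*}

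For the negative estimate~\eqref{G022} at a point $p=(t,r)$ with $|t|\le r$ on the top edge of $R$, Lemma~\ref{PC=2} gives $H^s_{\mathcal{C}_r}(p)\le-2\Psi(r)$; under~\eqref{NUCLEONORMA} one has $\Psi(r)\sim r^{-s}$. The correction integral is handled by splitting $\mathcal{C}\setminus(R\cup\mathcal{B}_\delta^\pm)$ into (a) the ``slivers'' between the cross sides $\{|y_1|=|y_2|\}$ and the droplet tangent lines $\{|y_2|=\tan\theta\,|y_1|\}$, which have angular aperture $\pi/4-\theta=O(\delta)$, and (b) the ``far'' parts of the cross arms beyond the droplets, at distance $\gtrsim 1$ from $p$. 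Under the hypothesis $\delta\le c_\sharp^4\,r$ with $c_\sharp$ small, the sliver contribution can be shown to be $O(\delta^{1-s}r^{-s})$ and the far contribution is $O(1)$; both are lower order with respect to $\Psi(r)\sim r^{-s}$, so the main term dominates. The residual range $|t|\in(r,r/\tan\theta)=(r,r(1+O(\delta)))$ is treated by a perturbative argument: $(t,r)$ lies very close to the concave corner of $\mathcal{C}_r$ at $(\pm r,r)$ (interior angle $5\pi/4$), which produces an even more negative contribution to $H^s_{\mathcal{C}_r}$ that absorbs any enhancement of the sliver integral.

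For the uniform bound~\eqref{G021}, the boundary $\partial\mathcal{G}_{\delta,r}$ decomposes into three types of points: (a) the top/bottom edges of $R$ lying in $\partial\mathcal{G}_{\delta,r}$, where~\eqref{G022} already provides a much stronger bound; (b) the concave corners where the rectangle edges meet the droplet tangent lines (interior angle $\pi+\theta>\pi$), at which the standard cone computation gives $H^s_{\mathcal{G}_{\delta,r}}=-\infty$; and (c) the smooth portion of $\partial\mathcal{B}_\delta^\pm$ outside $R$, where the containment $\mathcal{B}_\delta^\pm\subseteq\mathcal{G}_{\delta,r}$ yields $H^s_{\mathcal{G}_{\delta,r}}(p)\le H^s_{\mathcal{B}_\delta^\pm}(p)$, and the right-hand side is bounded by a universal constant since $\partial\mathcal{B}_\delta^\pm$ is uniformly $C^{1,1}$ for $\delta\in(0,1/2)$. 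The main obstacle is the sliver estimate: one must track the singularity of $K$ near the sliver's apex (at the corners $(\pm r,r)$, which are concave corners of $\mathcal{C}_r$ where both $H^s_{\mathcal{C}_r}$ and the correction integral have divergent principal-value parts that need to cancel) and then rely on $\delta\le c_\sharp^4\,r$ to ensure that the resulting correction is truly of lower order compared to the negative main term $-2\Psi(r)$ from Lemma~\ref{PC=2}.
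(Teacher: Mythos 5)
Your central device—comparing $\mathcal{G}_{\delta,r}$ with the perturbed cross $\mathcal{C}_r$ via the principal-value identity
\[
H^s_{\mathcal{G}_{\delta,r}}(p)=H^s_{\mathcal{C}_r}(p)+2\int_{\mathcal{C}_r\setminus\mathcal{G}_{\delta,r}}K(p-y)\,dy
\]
is indeed the device the paper uses, but \emph{only} for $p\in\partial\mathcal{G}_{\delta,r}\cap[-r,r]^2$, and under the implicit restriction that $p$ stays at distance $\gtrsim c_\sharp r$ from the corner points $\mathcal{S}$ of $\mathcal{G}_{\delta,r}$. You try to run it on the whole strip $[-2r,2r]\times[-r,r]$, and there the identity collapses: at $p=(\pm r,\pm r)$ the point $p$ sits at the reflex corner of $\mathcal{C}_r$ (interior angle $5\pi/4$), so $H^s_{\mathcal{C}_r}(p)=-\infty$, while the sliver $\mathcal{C}_r\setminus\mathcal{G}_{\delta,r}$ opens with \emph{finite} angular aperture ($\approx\pi/4$, not $O(\delta)$) as seen from $p$, so the correction integral diverges to $+\infty$. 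The ``perturbative argument'' you invoke to absorb this is not an argument; it is precisely the delicate cancellation that needs to be proved. The paper avoids the issue entirely by treating a $c_\sharp r$--neighborhood of $\mathcal{S}$ with a \emph{direct} sectorial estimate (dominating the singular part of the kernel by an angular wedge of aperture $\pi/6$ and splitting at radius $\sqrt{c_\sharp}\,r$), with no reference to $\mathcal{C}_r$ at all. Also note that your estimate $O(\delta^{1-s}r^{-s})$ for the sliver contribution at regular points is imprecise: the correct order is $O(\delta\, r^{-s})$, which is what makes $\delta<c_\sharp^4 r$ the natural smallness condition.

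A second genuine error is in your case (c) for \eqref{G021}: you assert that $\partial\mathcal{B}_\delta^\pm$ is uniformly $C^{1,1}$, hence $H^s_{\mathcal{B}_\delta^\pm}(p)$ is bounded by a universal constant. This is false: $\mathcal{B}_\delta^\pm$ has a \emph{convex} corner at the origin with opening $\alpha(\delta)<\pi$, so $H^s_{\mathcal{B}_\delta^\pm}(p)\sim(\pi-\alpha)\,\mathrm{dist}(p,0)^{-s}\to+\infty$ as $p\to 0$. Since $r$ is allowed to be arbitrarily small, the relevant boundary points on the tangent segments sit at distance $\approx r$ from the origin, and $H^s_{\mathcal{B}_\delta^\pm}(p)\approx r^{-s}$ is unbounded. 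The bound $H^s_{\mathcal{G}_{\delta,r}}(p)\le H^s_{\mathcal{B}_\delta^\pm}(p)$ is true but useless here; what saves the day is precisely the rectangle $R$ sitting inside $\mathcal{G}_{\delta,r}$ and cancelling the corner contribution. The paper encodes this by regularizing: it introduces a $C^{1,1}$ set $\mathcal{G}'\supset\mathcal{G}_{\delta,r}$ (built using $R$ as well as the droplets) with $\mathcal{G}'\setminus B_{1/8}=\mathcal{G}_{\delta,r}\setminus B_{1/8}$, and compares $H^s_{\mathcal{G}_{\delta,r}}(p)$ with $H^s_{\mathcal{G}'}(p)$. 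You would need to do something analogous; the one-droplet comparison does not close.

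To summarize: your strategy overlaps with the paper's on the ball portions and in the bulk of $[-r,r]^2$, but it is missing the two computations that actually carry the weight — the direct sectorial estimate in a $c_\sharp r$--neighborhood of the corners $\mathcal{S}$, and a bound for the tangent-segment boundary that exploits $R$ rather than a single droplet.
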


\begin{proof}
Let $\alpha(\delta) $ the angle at $x=0$ in $\mathcal{B}_\delta^+$. Observe that  when $\delta=0$, this angle is $\pi/2$ and moreover there exist $\delta_0$ and $C_0>0$ such that
$|\alpha(\delta)-\frac{\pi}{2}| \leq C_0\delta$, for all $0<\delta<\delta_0$. In particular we may assume that $\alpha(\delta)\geq \pi/3$. 
We fix then $\delta\leq r<\delta_0$.   

First of all note that for all $p=(p_1,p_2)\in\partial{ {\mathcal{G}}_{\delta,r} }$, with $p_1\geq \sqrt{2}-\frac{(1-\delta)^2}{\sqrt{2}}$ (resp. $p_1\leq -\sqrt{2}+\frac{(1-\delta)^2}{\sqrt{2}}$), then $p\in \partial B_{1-\delta}(\sqrt{2},0)$ (resp. $p\in \partial B_{1-\delta}(-\sqrt{2},0)$), and then 
\[ H^s_{ {\mathcal{G}}_{\delta,r} }(p)\leq H^s_{B_{1-\delta}(\sqrt{2},0)}(p)=c(1)(1-\delta)^{-s}\qquad \left(\text{resp. }
H^s_{ {\mathcal{G}}_{\delta,r} }(p)\leq H^s_{B_{1-\delta}(-\sqrt{2},0)}(p)=c(1) (1-\delta)^{-s}\right)\]
where $c(1)= H^s_{B_1}$. 

We take~$c_\sharp\in(0,1)$ to be taken conveniently small in what follows.
We notice that~${\mathcal{S}}:=(\partial{\mathcal{G}}_{\delta,r})\cap\{|x_2|=r\}$ consists of four points. We take~$p=(p_1,p_2)\in\partial{ {\mathcal{G}}_{\delta,r} }$
such that there exists~$q\in{\mathcal{S}}$ such that $|p-q|<c_\sharp r$ (see e.g. Figure~\ref{PCAS1P} for a possible configuration).

Then,
\begin{equation}\label{INCOC}\begin{split}
&\lim_{\e\searrow0}
\int_{B_{\sqrt{c_\sharp}\,r}(p)\setminus B_\e(p)}
\Big(\chi_{\R^2\setminus { {\mathcal{G}}_{\delta,r} }}(y)-
\chi_{ {\mathcal{G}}_{\delta,r} }(y)\Big) \frac{1}{|p-y|^{2+s}}\,dy\\
&\qquad \le - \iint_{(0,\pi/6)\times(
{c_\sharp}\,r,\,\sqrt{c_\sharp}\,r)}
\frac{1}{\varrho^{1+s}}\,d\vartheta\,d\rho=-
\frac{\pi}{6s}\frac{1}{c_\sharp^{s/2}r^s}\left(  \frac{1}{c_\sharp^{s/2}} -1 \right),
\end{split}\end{equation}
while
$$ \int_{\R^2\setminus B{\sqrt{c_\sharp}\,r}(p)}
\Big(\chi_{\R^2\setminus { {\mathcal{G}}_{\delta,r} }}(y)-
\chi_{ {\mathcal{G}}_{\delta,r} }(y)\Big) \frac{1}{|p-y|^{2+s}}
\le 2\pi\int_{ \sqrt{c_\sharp} r }^{+\infty}\frac{1}{\rho^{1+s}}\,d\rho= \frac{2\pi}{s}\frac{1}{c_\sharp^{s/2}r^s}.
$$
As a consequence,
$$ H^K_{ {\mathcal{G}}_{\delta,r} }(p)\le-
\frac{\pi}{6s}\frac{1}{c_\sharp^{s/2}r^s}\left(  \frac{1}{c_\sharp^{s/2}}-1\right)
+
\frac{2\pi}{s}\frac{1}{c_\sharp^{s/2}r^s}\leq -c_\sharp \frac{1}{r^s}$$
as long as~$c_\sharp$
is sufficiently small,
which implies~\eqref{G022} (and also~\eqref{G021}) in this case.

Now consider $p \in\partial{ {\mathcal{G}}_{\delta,r} }$ such that $p_2\neq r$ and $d(p,\mathcal{S})\geq c_\sharp r$. If $p\in\partial B_{1-\delta}(\pm\sqrt{2},0)$ we are ok,
and in the other case, note that we can define a set ${ { {\mathcal{G}} } }'$
with $C^{1,1}$-boundary
(uniformly in~$\delta$ and~$r$) such that~${ { {\mathcal{G}}_{\delta,r} } }\subset
{ { {\mathcal{G}}  } }'$ and~${ { {\mathcal{G}} } }'\setminus B_{1/8}=
{ { {\mathcal{G}}_{\delta,r} } }\setminus B_{1/8}$.
Then, we obtain that
$$ C'\ge H^K_{ { {\mathcal{G}} }' }(p)\ge H^K_{ { {\mathcal{G}}_{\delta,r} } }(p)-C'',$$
for some~$C'$, $C''>0$, depending only on the local $C^{1,1}$-norms
of the boundary of~${ { {\mathcal{G}}  } }'$, and this gives~\eqref{G021}
in this case.

Finally note
that $ {\mathcal{G}}_{\delta,r}\subseteq \mathcal{C}_r$, where $\mathcal{C}_r$ is the perturbed cross is defined in \eqref{PCROSS}. So, if  $p\in\partial{ {\mathcal{G}}_{\delta,r} } \cap([-r,r]\times[-r,r])$, then $p\in \partial\mathcal{C}_r$. Moreover by  Lemma \ref{PC=2}  and the definition of $\Psi$ in \eqref{defpsi}
\[ H^s_{ {\mathcal{C}}_{r} }(p)\leq -2\Psi(r)=-C \frac{1}{r^s}\]
where $C>0$ is a universal constant. In this case, we notice that~${ {\mathcal{G}}_{\delta,r} }$
and~${\mathcal{C}}_r$ coincide in $B_r$,
and, outside such a neighborhood of the origin, they differ by four
portions of cones (passing in the vicinity of~${\mathcal{S}}$)
with opening bounded by~$C_0\delta$.
That is, if we set
$$ { {\mathcal{D}}_{\delta,r} }:=\big( { {\mathcal{G}}_{\delta,r} }\setminus
{\mathcal{C}}_r\big)\cup\big({\mathcal{C}}_r\setminus
{ {\mathcal{G}}_{\delta,r} }\big),$$
we have that
\begin{eqnarray*}&&
\int_{{ {\mathcal{D}}_{\delta,r} }}
\frac{dy}{|p-y|^{2+s}}\le
C_2\,\left[
\iint_{(0,C_1\delta)\times(c_\sharp r/2, 10r]}\frac{\rho\,d\vartheta\,d\rho}{
(c_\sharp \,r/2)^{2+s}}
+\iint_{(0,C_1\delta)\times( 10r,+\infty)}\frac{\rho\,d\vartheta\,d\rho}{
\rho^{2+s}}
\right]
\le \frac{C_3\,\delta}{c_\sharp^{2+s}\,r^s}
\le \frac{c_\sharp}{r^s},
\end{eqnarray*}
thanks to our assumption on~$\delta$. Consequently
$$ \big| H^K_{ { {\mathcal{G}}_{\delta,r} } }(p)-H^K_{ {\mathcal{C}}_r }(p)\big|\le
\frac{c_\sharp}{r^s}$$
and so, making use of~\eqref{PRO2} and~\eqref{NOME},
$$ H^K_{ { {\mathcal{G}}_{\delta,r} } }(p) \le H^K_{ {\mathcal{C}}_r }(p)+
\frac{c_\sharp}{r^s}\le-\frac{c_*}{r^s}+
\frac{c_\sharp}{r^s}\le-
\frac{c_*}{2\,r^s},$$
for a suitable~$c_*>0$,
as long as~$c_\sharp>0$ is sufficiently small.
This establishes~\eqref{G022} (and also~\eqref{G021}) in this case. \end{proof}

\begin{figure}
    \centering
    \includegraphics[width=8cm]{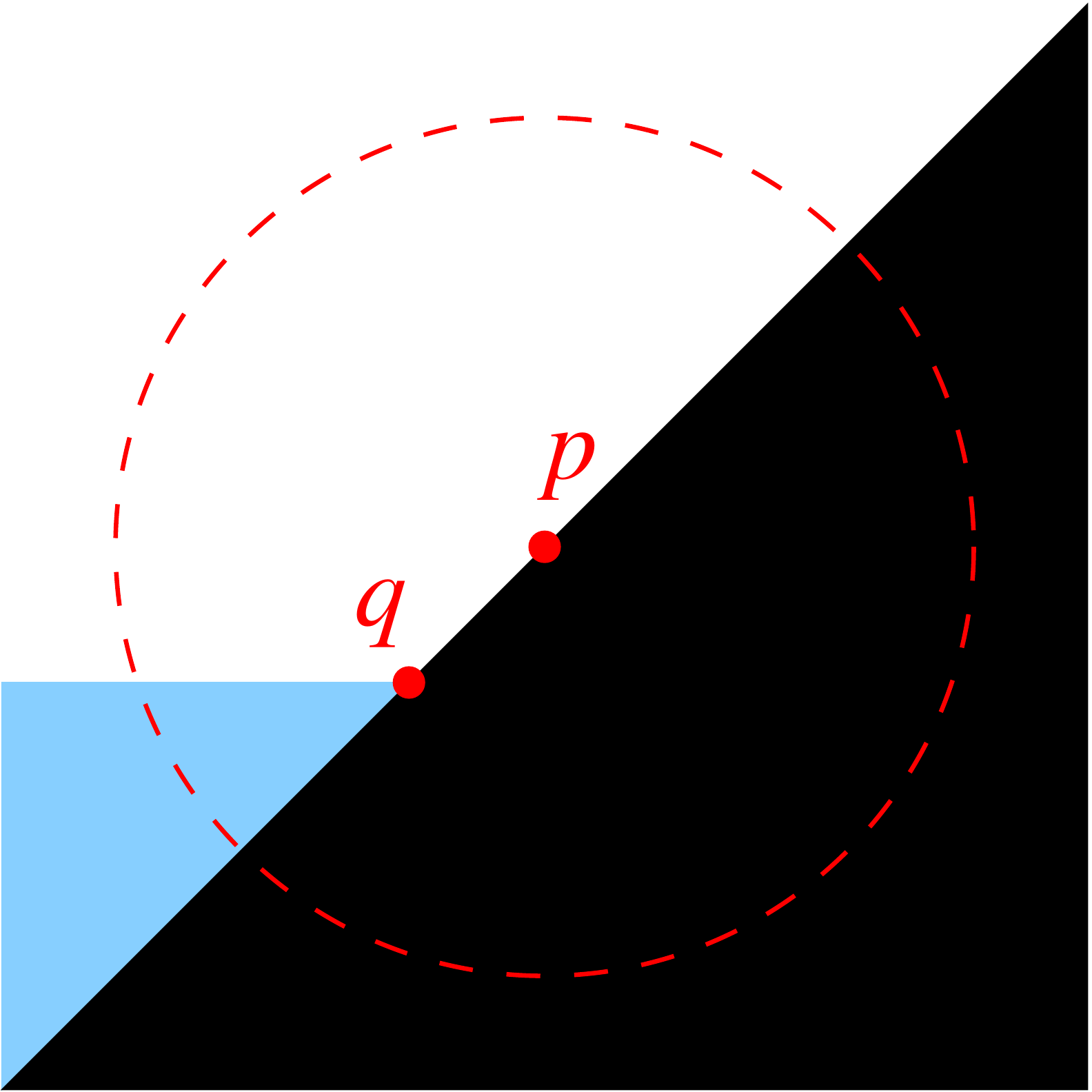}
    \caption{\em {{ .}}}
    \label{PCAS1P}
\end{figure}

With these auxiliary computations, we can now complete the proof
of Theorem~\ref{FAT3}, by arguing as follows.

\begin{proof}[Proof of Theorem~\ref{FAT3}]
Let~$c_\sharp>0$ be as in Lemma~\ref{9rierhgj4848dfg85}, $0<\eps<c_\sharp/2$
and $c_\star:=((c_\sharp-\eps)\,(1+s))^{1/(1+s)}$. We define $r(t)$ such that $\dot r(t)= (c_\sharp-\eps)  r(t)^{-s}$,
with $r(0)=0$. So, we have that $r(t)=c_\star t^{1/(1+s)}$.
Let also
$$ \delta(t):=\frac1{c_\sharp\eps}\,\int_0^t \frac{d\tau}{r(\tau)}=
\frac{1+s}{(c_\sharp-\eps)\,c_\star\,s}\; t^{s/(1+s)}
.$$
We now estimate the outer normal velocity of~${ { { {\mathcal{G}}_{\delta(t),r(t)} } } }$
via Lemma~\ref{9rierhgj4848dfg85}.
First of all, from~\eqref{G022} at $p\in (\partial{ { { {\mathcal{G}}_{\delta(t),r(t)} } } })\cap \{|x_2|=r(t), |x_1|<\sqrt{2}\}$
we get 
$$ \dot r(t)=\frac{c_\sharp-\eps}{(r(t))^s}\leq -H^s_{ {\mathcal{G}}_{\delta(t),r(t)} }(p)-\frac{\eps}{c_\star^s t^{s/(1+s)}}.$$

Moreover, the shrinking velocity at~$x\in(\partial{ { { {\mathcal{G}}_{\delta(t),r(t)} } } })
\setminus\{|x_2|=r(t)\}$
is at least~$r(t)\dot\delta(t)=1/(c_\sharp-\eps)$. This implies that 
at every $x\in(\partial{ { { {\mathcal{G}}_{\delta(t),r(t)} } } })
\setminus\{|x_2|=r(t)\}$
we get
\[\partial_t x\cdot \nu(x)\leq -\frac{1}{c_\sharp-\eps}\leq  -H^s_{ {\mathcal{G}}_{\delta(t),r(t)} }(x)-\frac{\eps}{c_\sharp(c_\sharp-\eps)}.\]
by \eqref{G021}.
Therefore, by Proposition \ref{subgeometrico}, we get that  
\begin{equation}\label{0923e0234xcde-BIS}B_{ c_\star\,t^{1/(1+s)} }\subseteq
{ { { {\mathcal{G}}_{\delta(t),r(t)} } } }\subseteq
 \mathcal{G}^+(t).\end{equation}
Conversely, since~${\mathcal{G}}$ is contained in the cross~${\mathcal{C}}$,
it follows from Corollary \ref{cpgeometric} and 
Theorem~\ref{FAT1} that
$$ B_{c_o t^{1/(1+s)}}\subseteq (\R^2\setminus\mathcal{C})^+(t)\subseteq (\R^2\setminus{\mathcal{G}})^+(t).  
$$
{F}rom this and~\eqref{0923e0234xcde-BIS} it follows that
$$ B_{\hat{c}\, t^{1/(1+s)}}\subseteq \mathcal{G}^+(t)\cap (\R^2\setminus{\mathcal{G}})^+(t)=\Sigma_{\mathcal{G}}(t)$$
with~$\hat{c}:=\min\{ c_\star,\,c_o\}$,
which proves~\eqref{9238475y6uedjidftgeri:T3}.
\end{proof}

\section{Perturbation of tangent balls and proof of Theorem~\ref{DUPAT}}\label{sectangenti}

Also in this Section, the state space is $\R^2$. The idea to prove Theorem~\ref{DUPAT} is to construct inner barriers
using ``almost tangent'' balls and take advantage of the scale invariance given
by the homogeneous kernels in~\eqref{NUCLEONORMA}. For this, given~$\delta\in\left[0,\frac18\right]$,
we consider the set
$$ {\mathcal{Z}}_{\delta,r}:= B_r\big( (1+\delta)r,0\big)\cup B_r\big( (-1-\delta)r,0\big)\subseteq \R^2.$$
Then, we have that the nonlocal curvature of~${\mathcal{Z}}_{\delta,r}$
is always controlled from above by that of the ball, and it becomes negative
in the vicinity of the origin. More precisely:

\begin{lemma}\label{NUK}
Assume~\eqref{NUCLEONORMA} with $n=2$.
Then, for any~$p\in\partial{\mathcal{Z}}_{\delta,r}$ we have that
\begin{equation}\label{ZT-1}
H^K_{ {\mathcal{Z}}_{\delta,r} }(p)\le \frac{C}{r^s},
\end{equation}
for some~$C>0$.
In addition, there exists~$c\in(0,1)$ such that if~$\delta\in(0,c^2)$ and~$
p\in(\partial{\mathcal{Z}}_{\delta,r})\cap B_{cr}$
then
\begin{equation}\label{ZT-2}
H^K_{ {\mathcal{Z}}_{\delta,r} }(p)\le-c. 
\end{equation}
\end{lemma}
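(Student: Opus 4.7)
The plan is to write $\mathcal{Z}_{\delta,r}=B_+\sqcup B_-$ with $B_\pm:=B_r(\pm(1+\delta)r,0)$ (disjoint since $\delta>0$) and exploit the scale invariance $H^K_{\lambda E}(\lambda p)=\lambda^{-s}H^K_E(p)$, which follows from the homogeneity of the kernel in~\eqref{NUCLEONORMA}. A boundary point $p$ of $\mathcal{Z}_{\delta,r}$ lies on exactly one of $\partial B_\pm$, so by symmetry we may assume $p\in\partial B_+$. Decomposing $\chi_{\mathcal{Z}^c}-\chi_{\mathcal{Z}}=(\chi_{B_+^c}-\chi_{B_+})-2\chi_{B_-}$ inside~\eqref{CURVA} and observing that the integral over $B_-$ is regular (since $p$ is at positive distance from $B_-$), one obtains the pointwise identity
\[
H^K_{\mathcal{Z}_{\delta,r}}(p)\;=\;H^K_{B_+}(p)\;-\;2\int_{B_-}\frac{dy}{|p-y|^{2+s}}.
\]

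For~\eqref{ZT-1}, the integral above is nonnegative, so $H^K_{\mathcal{Z}_{\delta,r}}(p)\le H^K_{B_+}(p)=H^s_{B_1}(e_1)/r^s$ by scale invariance, yielding the bound with $C:=H^s_{B_1}(e_1)$.

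For~\eqref{ZT-2} I rescale: setting $\tilde p:=p/r\in\partial\mathcal{Z}_{\delta,1}\cap B_c$, it suffices (after rescaling back) to bound $H^K_{\mathcal{Z}_{\delta,1}}(\tilde p)$ from above by a negative constant. The rescaled identity reads $H^K_{\mathcal{Z}_{\delta,1}}(\tilde p)=c(s)-2\mathcal{I}(\tilde p)$ with $c(s):=H^s_{B_1}(e_1)$ and $\mathcal{I}(\tilde p):=\int_{B_1(-(1+\delta),0)}|\tilde p-y|^{-(2+s)}\,dy$. The key step is the singular lower bound
\[
\mathcal{I}(\tilde p)\;\ge\;\frac{C_0}{\eta^s}-C_1,\qquad \eta:=\mathrm{dist}\bigl(\tilde p,\,B_1(-(1+\delta),0)\bigr),
\]
with $C_0,C_1>0$ depending only on $s$. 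To obtain it, let $q^\ast$ be the closest point of $\overline{B_-}$ to $\tilde p$ and let $n$ be the inward unit normal of $B_-$ at $q^\ast$. A direct chord length calculation in the unit disk shows that every chord in $\overline{B_-}$ starting at $q^\ast$ in a direction $\omega$ with $\omega\cdot n\ge\cos(\pi/6)$ has length at least $\sqrt 3$, hence the sector
\[
\mathcal{S}:=\bigl\{q^\ast+\rho\omega\;:\;0\le\rho\le\sqrt 3,\ \omega\cdot n\ge\cos(\pi/6)\bigr\}
\]
lies entirely in $\overline{B_-}$. Using $|\tilde p-y|\le\eta+\rho$ on $\mathcal{S}$ and integrating in polar coordinates around $q^\ast$,
\[
\mathcal{I}(\tilde p)\;\ge\;\frac{\pi}{3}\int_0^{\sqrt 3}\frac{\rho\,d\rho}{(\eta+\rho)^{2+s}}\;=\;\frac{\pi}{3\,s(1+s)}\,\eta^{-s}+O_s(1),
\]
which gives the claim. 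Finally, since $\tilde p\in B_c$ and $\delta\le c^2$, the triangle inequality yields $\eta\le|\tilde p|+\delta\le c+c^2\le 2c$, so
\[
H^K_{\mathcal{Z}_{\delta,1}}(\tilde p)\;\le\;c(s)+2C_1-\frac{2C_0}{(2c)^s}.
\]
For $c$ sufficiently small this is $\le-1$, and reverting the scaling and possibly shrinking $c$ once more yields~\eqref{ZT-2}.

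The principal technical obstacle is the singular lower bound for $\mathcal{I}(\tilde p)$: one must first realize a sector of positive aperture entirely inside the smooth convex set $B_-$ at its closest point to $\tilde p$, and then extract the $\eta^{-s}$ divergence cleanly from the resulting one-dimensional polar integral. The remaining ingredients---the additive splitting of the curvature, its monotonicity under set inclusion, and the scaling relation---are routine consequences of the definition~\eqref{CURVA} and the homogeneity of $K$.
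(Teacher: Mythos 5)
Your proof is correct and follows essentially the same strategy as the paper's: for~\eqref{ZT-1} use monotonicity under inclusion, and for~\eqref{ZT-2} rescale to $r=1$ and estimate the (negative) contribution of the far ball from below by $\sim c^{-s}$ using $|p|\le c$. The only cosmetic difference is the way you lower-bound $\int_{B_-}K(p-y)\,dy$: you carve out a sector of aperture $\pi/3$ and radius $\sqrt 3$ at the nearest point of $\overline{B_-}$, obtaining the clean asymptotics $C_0\,\eta^{-s}-C_1$, whereas the paper simply inscribes the ball $B_c(-2c,0)\subset B_-$ (noting $|x-p|\le 4c$ there) to get $\gtrsim c^{-s}$ directly; both give the same order of divergence, and your explicit splitting $H^K_{\mathcal{Z}_{\delta,r}}(p)=H^K_{B_+}(p)-2\int_{B_-}K(p-y)\,dy$ is only making explicit what the paper uses implicitly.
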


\begin{proof} Notice that~$\partial{\mathcal{Z}}_{\delta,r}\subseteq\big(
\partial B_r\big( (1+\delta)r,0\big)\big)\cup\big(\partial B_r\big( (-1-\delta)r,0\big)
\big)$. Moreover, ${\mathcal{Z}}_{\delta,r}\supseteq B_r\big( (1+\delta)r,0\big)$,
as well as~${\mathcal{Z}}_{\delta,r}\supseteq B_r\big( (-1-\delta)r,0\big)$,
hence, in view of~\eqref{CURVA}, the nonlocal curvature of~${\mathcal{Z}}_{\delta,r}$
is less than or equal to that of~$B_r$, which proves~\eqref{ZT-1}.

Now we prove~\eqref{ZT-2}. For this, up to scaling, we assume that~$r:=1$
and we take~$p\in(\partial{\mathcal{Z}}_{\delta,1})\cap B_{c}$.
Without loss of generality,
we also suppose that~$p_1$, $p_2>0$ and we observe that 
\begin{equation}\label{QRL}
B_c(-2c,0)\subseteq
B_1 \big( (-1-\delta),0\big),
\end{equation}
as long as~$c$ is small enough. 
Indeed if~$x\in B_c(2c,0)$ then we can write~$x=-2ce_1+ce$, for some~$e\in \S^1$, and so
$$ |x-(-1-\delta)e_1|=|(1+\delta-2c)e_1+ce|\le |1+\delta-2c|+c=(1+\delta-2c)+c=1+\delta-c\le
1+c^2-c<1.$$
This proves~\eqref{QRL}.

Hence, from~\eqref{CURVA} and~\eqref{QRL},
the nonlocal curvature of~${\mathcal{Z}}_{\delta,1}$ at~$p$
is less than or equal to the nonlocal
curvature of~$B_r\big( (1+\delta),0\big)$, which is bounded by some~$C>0$,
minus the contribution coming from~$B_c(-2c,0)$.
That is,
\begin{equation} \label{9340545jdwie8838383}-H^K_{ {\mathcal{Z}}_{\delta,r} }(p)\ge -C+
\int_{ B_c(-2c,0) }\frac{dx}{|x-p|^{2+s}}=
-C+
\int_{ B_c(2c+p_1,p_2) }\frac{dx}{|y|^{2+s}}.\end{equation}
Also, if~$y\in B_c(2ce_1+p_1, p_2)$, we have that~$|y|\le |y-2ce_1-p|+|2ce_1+p|\le
c+2c+|p|\le 4c$, and so
$$ \int_{ B_c(2c+p_1,p_2) }\frac{dx}{|y|^{2+s}}\ge \frac{c_0\, c^2}{c^{2+s}}=\frac{c_0}{c^s},$$
for some~$c_0>0$.
So we insert this information into~\eqref{9340545jdwie8838383}
and we obtain
$$ -H^K_{ {\mathcal{Z}}_{\delta,r} }(p)\geq -C+\frac{c_0}{c^s}\ge\frac{c_0}{2c^s}$$
as long as~$c$ is sufficiently small.
This completes the proof of~\eqref{ZT-2}, as desired.
\end{proof}

{F}rom Lemma~\ref{NUK}, we can control the geometric flow of the double tangent balls
from inside with barriers that shrink the sides of the picture and make the origin emanate some mass:

\begin{lemma} \label{A-102-A}
There exist~$\delta_0\in(0,1)$,
and~$\bar C>0$ such that if~$\delta\in(0,\delta_0)$, then
\[{\mathcal{O}}^-(\delta)\supset \bigcup_{\sigma\in(-\delta^2,\delta^2)}\big(
B_{1-\bar C\delta}
\big(1-\bar C\delta, 0\big)
\cup B_{1-\bar C\delta}\big(-1+\bar C\delta, 0\big)+\sigma e_2
\Big).\end{equation*}
\end{lemma}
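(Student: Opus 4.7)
The plan is to construct, for each $\sigma\in(-\delta^2,\delta^2)$, an inner geometric sub-barrier $E_\sigma(t)$ that sits inside $\mathrm{int}(\mathcal{O})$ at $t=0$ and coincides with the desired translated pair of tangent balls at $t=\delta$; then the comparison principle in Corollary~\ref{cpgeometric} yields $E_\sigma(\delta)\subseteq\mathcal{O}^-(\delta)$, and the union over $\sigma$ proves the lemma. The natural choice is $E_\sigma(t):=\mathcal{Z}_{a(t),b(t)}+\sigma e_2$, because Lemma~\ref{NUK} provides exactly the two-scale control needed: $H^K\le C/b^s$ globally and $H^K\le-c$ on the near-origin region $B_{cb}$. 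Concretely, take linear profiles $a(t):=\hat\kappa\delta(1-t/\delta)$ and $b(t):=1-\hat\lambda\delta-(\bar C-\hat\lambda)t$, with positive constants $\hat\kappa<\hat\lambda<\bar C$ and $\hat\kappa<c$ to be fixed, so that $a(\delta)=0$, $b(\delta)=1-\bar C\delta$, and $E_\sigma(\delta)$ is exactly the prescribed target.

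The initial containment $E_\sigma(0)\subseteq\mathrm{int}(\mathcal{O})$ reduces to $\sigma^2+((1+\hat\kappa\delta)(1-\hat\lambda\delta)-1)^2\le(\hat\lambda\delta)^2$, i.e.\ $\sigma^2\le\bigl(\hat\lambda^2-(\hat\kappa-\hat\lambda)^2\bigr)\delta^2+O(\delta^3)$, easily arranged for $|\sigma|\le\delta^2$ by fixing $\hat\kappa<\hat\lambda$ and taking $\delta$ small. The heart of the matter is the verification of the subsolution condition. Parametrising the right ball's boundary by the angle $\theta\in[0,2\pi]$, the outer normal velocity is
\[
V(\theta)=M(t)\cos\theta+\dot b(t),\qquad M(t):=\dot a(t)\,b(t)+(1+a(t))\,\dot b(t),
\]
and with our rates $M(t)\approx-\hat\kappa-(\bar C-\hat\lambda)$ is strongly negative, so $V$ is maximised at $\theta=\pi$ (the point closest to the origin) with $V(\pi)\approx\hat\kappa$.

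The verification then splits by region. For angles $\theta$ such that the boundary point lies in $B_{cb(t)}$ (a fixed arc of angular radius approximately $c$ around $\theta=\pi$, independent of $\delta$ and $\sigma$), Lemma~\ref{NUK} gives $H^K\le-c$, so $V(\theta)+H^K\le V(\pi)-c\le\hat\kappa-c\le-h$ with $h:=(c-\hat\kappa)/2$. For the remaining angles, $1+\cos\theta\ge\gamma$ for some fixed $\gamma>0$, and hence $V(\theta)\le\hat\kappa-\gamma(\bar C-\hat\lambda+\hat\kappa)+O(\delta)$, which can be driven below $-C/b(t)^s-h$ by choosing $\bar C$ sufficiently large (independent of $\delta$); combined with the global bound $H^K\le C/b^s$, the subsolution inequality follows here too. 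The left ball is handled by the symmetry $x_1\mapsto-x_1$, and the degenerate boundary case $a(\delta)=0$, where the two barrier balls meet tangentially at $\sigma e_2$, is harmless because the inequality is strict on $[0,\delta)$ and the viscosity curvature at the resulting cusp is only more negative. Applying Corollary~\ref{cpgeometric} yields $E_\sigma(t)\subseteq\mathcal{O}^-(t)$ for all $\sigma\in(-\delta^2,\delta^2)$ and $t\in[0,\delta]$, and the union at $t=\delta$ gives the stated containment. The main obstacle is precisely this pointwise-in-$\theta$ verification: both halves of Lemma~\ref{NUK} are indispensable, and the fact that the near-origin arc has uniformly positive angular size is what allows the global curvature bound to be defeated on its complement by choosing $\bar C$ large (but finite).
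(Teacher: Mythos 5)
Your proposal is correct and follows essentially the same strategy as the paper's proof: both construct a one-parameter family of inner barriers consisting of two shrinking balls whose gap at the origin closes linearly in time, use Lemma~\ref{NUK} to split the curvature verification into the near-origin arc (where the strong negativity $H^K\le -c$ absorbs the slightly positive normal velocity at the innermost point) and the complementary arc (where a large shrinking rate $\bar C$ dominates the global bound $C/b^s$), and then invoke Proposition~\ref{subgeometrico}/Corollary~\ref{cpgeometric} to propagate the containment to the endpoint time. The only cosmetic difference is parametrization: the paper keeps an auxiliary translation-speed parameter $\mu\in[0,\sqrt\varepsilon]$ and specializes to $\mu=\sqrt\varepsilon$, $t=\sqrt\varepsilon$ at the end, whereas you bake the closing time directly into the profiles $a(t)$, $b(t)$ so that $a(\delta)=0$; this is the same construction in different clothing, and both treatments handle the terminal cusp by the same limiting/comparison reasoning.
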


\begin{proof} Fix~$\e\in(0,1)$, to be taken arbitrarily small in what follows.
Let~$\mu\in [0,\sqrt\e]$ and let, for any~$t\in [0,(1-\e)/C_0)$,
$$ \e_\mu(t):= \e-\mu t \qquad{\mbox{and}}\qquad r(t):=1-\e-C_0\,t,$$
with~$C_0>0$ to be chosen conveniently large.
We consider an inner barrier consisting in two balls
of radius~$r(t)$ which, for any~$t\in[0,(1-\e)/C_0)$, remain at distance~$2\e(t)$.
Namely, we set
\begin{equation}\label{DEF F EM}
{\mathcal{F}}_{\e,\mu}(t):= B_{r(t)}\big(r(t)+\e_\mu(t),0\big)
\cup B_{r(t)}\big(-r(t)-\e_\mu(t),0\big).\end{equation}
Notice that
\begin{equation}\label{COMP9345702}
{\mathcal{O}}\supseteq {\mathcal{F}}_{\e,\mu}(0)+\sigma e_2
\qquad{\mbox{ for any }}\sigma\in(-\e,\e)\qquad d(\mathcal{O},  {\mathcal{F}}_{\e,\mu}(0)+\sigma e_2)>0.\end{equation}
We also observe that the vectorial velocity of this set is
the superposition of a normal velocity~$-\dot r \nu$, being~$\nu$ the interior normal,
and a translation velocity~$\pm(\dot r+\dot\e_\mu)e_1$, with the plus sign
for the ball on the right and the minus sign for the ball on the left. The normal
velocity of this set is therefore equal to
\begin{equation}\label{901-VEL} \Big(-\dot r \nu\pm(\dot r+\dot\e_\mu)e_1\Big)\cdot\nu
=-\dot r\pm (\dot r+\dot\e_\mu)\nu_1
= C_0\,(1\mp\nu_1)\mp\mu.
\end{equation}
Now, taken a point~$p$ on~$\partial{\mathcal{F}}_{\e,\mu}(t)$,
we distinguish two cases. Either~$p\in B_c$, where~$c$ is the one given
in Lemma~\ref{NUK}, or~$p\in\R^2\setminus B_c$. In the first case,
we have that
$$ C_0\,(1\mp\nu_1)\mp\mu\ge C_0\,(1-|\nu_1|)-\mu\ge 0-\mu\ge -\sqrt\e >-c,$$
This and~\eqref{901-VEL}
give that the normal
velocity of~${\mathcal{F}}_{\e,\mu}(t)$ at~$p$ is larger than~$-c$, and therefore greater
than~$H^K_{ {\mathcal{Z}}_{\delta,r} }(p)$, thanks to~\eqref{ZT-2}.

If instead~$p\in\R^2\setminus B_c$,
we have that~$|\nu_1(p)|\le 1-c_0$, for a suitable~$c_0\in (0,1)$, depending on~$c$,
and therefore
\begin{equation*}
C_0\,(1\mp\nu_1)\mp\mu\ge C_0\,(1-|\nu_1|)-\mu\ge C_0\,c_0-\mu\ge C_0\,c_0-1 \ge\frac{C_0\,c_0}{2}
\ge
\frac{C_0\,c_0}{2^{s+1} \,(r(t))^s},\end{equation*}
as long as~$C_0$ is sufficiently large.
This and~\eqref{901-VEL}
give that the inner  normal
velocity of~${\mathcal{F}}_{\e,\mu}(t)$ at~$p$ is strictly larger than~$\frac{C_0\,c_0}{2^{s+1} \,(r(t))^s}$,
which, if~$C_0$ is chosen conveniently big, is in turn strictly larger
than~$H^K_{ {\mathcal{Z}}_{\delta,r} }(p)$, thanks to~\eqref{ZT-1}.

In any case, we have shown that the inner  normal
velocity of~${\mathcal{F}}_{\e,\mu}(t)$ at~$p$ is strictly larger
than~$H^K_{ {\mathcal{Z}}_{\delta,r} }(p)$. This implies that ${\mathcal{F}}_{\e,\mu}(t)$ is a strict subsolution according to Proposition \ref{subgeometrico}. 

Then, by \eqref{COMP9345702} and Proposition \ref{subgeometrico}
\begin{equation}\label{9w:OSOSX10}
{\mathcal{O}}^-(t) \supseteq \bigcup_{\sigma\in(-\e,\e)}
\Big({\mathcal{F}}_{\e,\mu}(t)+\sigma e_2
\Big),\end{equation}
for any~$t\in[0,(1-\e)/C_0)$.

Now, taking~$\mu:=\sqrt\e$ in~\eqref{DEF F EM}, we see that
\begin{equation*}
{\mathcal{F}}_{\e,\sqrt\e}(t)
=
B_{1-\e-C_0t}\big(1-\sqrt\e t-C_0 t,0\big)
\cup B_{1-\e-C_0t}\big(-(1-\sqrt\e t-C_0 t),0\big)
\end{equation*}
for all~$t\in [0,\,(1-\e)/C_0]$. 
In particular, taking~$t:=\sqrt\e$,
\begin{equation*}
{\mathcal{F}}_{\e,\sqrt\e}(\sqrt\e)
=
B_{1-\e-C_0\sqrt\e}\big(1-\e-C_0 \sqrt\e,0\big)
\cup B_{1-\e-C_0\sqrt\e}\big(-(1-\e-C_0 \sqrt\e),0\big),
\end{equation*}
and the latter are two tangent balls at the origin. {F}rom this and~\eqref{9w:OSOSX10},
we deduce that
\begin{equation*}
{\mathcal{O}}^-(\sqrt\e) \supseteq \bigcup_{\sigma\in(-\e,\e)}
\Big(B_{1-\e-C_0\sqrt\e}\big(1-\e-C_0 \sqrt\e,0\big)
\cup B_{1-\e-C_0\sqrt\e}\big(-(1-\e-C_0 \sqrt\e),0\big)+\sigma e_2
\Big),\end{equation*}
and this implies the desired result by choosing~$\delta:=\sqrt\e$
and~$\bar C:= 2(C_0+1)$.
\end{proof}

We can now complete the proof of Theorem~\ref{DUPAT} in the following way:

\begin{proof}[Proof of Theorem~\ref{DUPAT}]
We observe that, in the setting of Lemma~\ref{A-102-B},
the result in Lemma~\ref{A-102-A} can be written as
$$ {\mathcal{O}}^-(\delta)\supseteq (1-\bar C\delta){\mathcal{O}}^+(0)\qquad\text{with }\quad d( {\mathcal{O}}^-(\delta), (1-\bar C\delta){\mathcal{O}}^+(0))\geq \delta^2$$
for all $\delta\in (0,\delta_0)$.

Fix now $C\ge \bar C$ and let 
${\mathcal{U}}:=(1-C\delta){\mathcal{O}}^+(0)$. Then,
by Corollary \ref{cpgeometric}, we have
\begin{equation}\label{9wo02e00} {\mathcal{O}}^-(t+\delta)\supseteq {\mathcal{U}}(t)\end{equation}
for all~$t\ge0$.

Now, in view of Lemma~\ref{A-102-B},
$$ {\mathcal{U}}(t)=(1-C\delta)\;{\mathcal{O}}^+\left( \frac{t}{(1-C\delta)^{1+s}}\right)$$
and so, combining with~\eqref{9wo02e00},
$$ {\mathcal{O}}^-(t+\delta)\supseteq(1-
C\delta)\;{\mathcal{O}}^+\left( \frac{t}{(1-C\delta)^{1+s}}\right).$$
Consequently, for any~$t\ge\delta$,
we can estimate the measure of the fattening set as
\begin{equation}\label{93-LWS0220}
\begin{split}
\big| {\rm{int}}\left({\mathcal{O}}^+(t)\right) \setminus \overline{{\mathcal{O}}^-(t)}\big|\,&\le
\left| {\rm{int}}\left({\mathcal{O}}^+(t)\right)\setminus
(1-C\delta)\;{\mathcal{O}}^+\left( \frac{t-\delta}{(1-C\delta)^{1+s}}\right)
\right|\\
&= \left| {\rm{int}}\left({\mathcal{O}}^+(t)\right)\right|-\left|
(1-C\delta)\;{\mathcal{O}}^+\left( \frac{t-\delta}{(1-C\delta)^{1+s}}\right)
\right|\\
&= \left| {\rm{int}}\left({\mathcal{O}}^+(t)\right)\right|-(1-C\delta)^2\,\left|
{\mathcal{O}}^+\left( \frac{t-\delta}{(1-C\delta)^{1+s}}\right)
\right|.\end{split}
\end{equation}
We now fix $t_0\ge \delta$ and choose $C=C(t_0)\ge \bar C$ such that 
\[
t\le \frac{t-\delta}{(1-C\delta)^{1+s}}\qquad \text{for all }t\ge t_0\,.
\]
So, by Proposition \ref{scs}, we get that 
\[
\liminf_{\delta\searrow0}
\left| {\mathcal{O}}^+\left( \frac{t-\delta}{(1-C\delta)^{1+s}}\right) \right| 
\geq \big| {\rm{int}}\left({\mathcal{O}}^+(t)\right)\big|.
\]
This and~\eqref{93-LWS0220} yield that, for $t\ge t_0$,
\begin{eqnarray*}
&& \big| {\rm{int}}\left({\mathcal{O}}^+(t)\right) - \overline{{\mathcal{O}}^-(t)}\big|
\le \limsup_{\delta\searrow0}
\left| {\rm{int}}\left({\mathcal{O}}^+(t)\right)\right|-(1-C\delta)^2\,\left|
{\mathcal{O}}^+\left( \frac{t-\delta}{(1-C\delta)^{1+s}}\right)
\right|
\\ &&\qquad =
\left| {\rm{int}}\left({\mathcal{O}}^+(t)\right)\right|-\liminf_{\delta\searrow0}(1-C\delta)^2\,\left|
{\mathcal{O}}^+\left( \frac{t-\delta}{(1-C\delta)^{1+s}}\right)
\right|\le0.
\end{eqnarray*}
Since $t_0$ was chosen arbitrarily, this completes the proof of Theorem~\ref{DUPAT}.
\end{proof}

\begin{appendix}

\section{Viscosity solutions and geometric barriers}\label{viscosec}
In this appendix, we recall the existence and uniqueness results about the
level set flow associated to the nonlocal evolution
\eqref{kflow}, and we provide some auxiliary results which will be useful in the proof of the main theorems. All the results hold in $\R^n$ for $n\geq 2$. 

Before introducing the level set equation and the notion
of viscosity solutions, we briefly 
discuss the evolution of balls according to the setting in~\eqref{kflow}--\eqref{CURVA}. 

\begin{lemma}\label{lemmacerchi}
Assume that~\eqref{ROTAZION} and
\eqref{ROTAZION2} hold true.  Then for every $R>0$ there
exists $c(R)> 0$ such that 
\[H_{B_R}^K(x)=c(R)\qquad {\mbox{for all }}x\in \partial B_R.  \]
Moreover  the function \[R\in (0, +\infty) \to c(R)\in (0, +\infty)\] is continuous, 
nonincreasing  and such that  $$\lim_{R\to +\infty} c(R)=0.$$
Furthermore,
if $K$ is a fractional kernel, that is $K(x)=\frac{1}{|x|^{n+s}}$, then
$c(R)= c(1) R^{-s}$. 
\end{lemma}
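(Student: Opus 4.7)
The plan is to exploit rotational invariance to reduce to a fixed boundary point, then rewrite the principal value in \eqref{CURVA} as a manifestly convergent integral over the ``sliver'' between $B_R$ and its tangent half-space via a reflection symmetry.

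\textbf{Step 1 (representation).} Fix $x:=Re_1\in\partial B_R$ and let $H_x:=\{y\in\R^n:y_1<R\}$ be the half-space containing $B_R$ and tangent to $\partial B_R$ at $x$. The point reflection $T(y):=2x-y$ fixes $x$ and each ball $B_\varepsilon(x)$, preserves $K(x-y)$ (since $|x-T(y)|=|x-y|$ and $K$ is radial by \eqref{ROTAZION}), and exchanges $H_x$ with its complement modulo a null set. Hence
\[
\mathrm{PV}\int_{\R^n}(\chi_{\R^n\setminus H_x}-\chi_{H_x})(y)\,K(x-y)\,dy=0.
\]
Subtracting this from \eqref{CURVA} and using $B_R\subset H_x$ gives
\[
c(R)=H^K_{B_R}(x)=2\int_{H_x\setminus B_R}K(x-y)\,dy,
\]
with no principal value required because $H_x\setminus B_R$ is a thin sliver vanishing quadratically at $x$; finiteness follows from \eqref{ROTAZION2} (see Step~3). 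Rotational invariance of $K$ forces the right-hand side to be independent of the chosen $x\in\partial B_R$, and $c(R)>0$ whenever $K\not\equiv 0$.

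\textbf{Step 2 (monotonicity).} Translate so that $x=0$ and $B_R$ is centered at $-Re_1$, so $H_x=\{y_1<0\}$. Then $y\in H_x\setminus B_R$ iff $y_1\le 0$ and $|y|^2+2Ry_1\ge 0$. A direct check shows that if $R_1<R_2$ then $H_x\setminus B_{R_2}\subset H_x\setminus B_{R_1}$, and the formula of Step~1 gives $c(R_1)\ge c(R_2)$.

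\textbf{Step 3 (continuity and $c(R)\to 0$).} In polar coordinates $y=x+\rho\omega$ around $x$, one checks that $y\in H_x\setminus B_R$ iff $\omega_1\in[-\min(\rho/(2R),1),0]$, whose surface measure on $S^{n-1}$ is bounded by $m_R(\rho)\le C\min(\rho/R,1)$. Thus
\[
c(R)=2\int_0^{+\infty}\rho^{n-1}K_0(\rho)\,m_R(\rho)\,d\rho.
\]
The small-$\rho$ part is controlled by $m_R(\rho)\le C\rho/R$ together with $\int_0^1\rho^n K_0(\rho)\,d\rho<\infty$, while the tail is controlled by the bound $m_R(\rho)\le C$ and $\int_1^{+\infty}\rho^{n-1}K_0(\rho)\,d\rho<\infty$. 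Continuity of $c$ on $(0,+\infty)$ then follows by dominated convergence. The decay at infinity requires handling the intermediate range $1\le\rho\le R$: splitting at $\rho=\sqrt R$ yields
\[
\frac{1}{R}\int_1^R\rho^{n}K_0(\rho)\,d\rho\,\le\,\frac{1}{\sqrt R}\int_1^{+\infty}\rho^{n-1}K_0(\rho)\,d\rho+\int_{\sqrt R}^{+\infty}\rho^{n-1}K_0(\rho)\,d\rho,
\]
which tends to $0$, and $\int_R^{+\infty}\rho^{n-1}K_0(\rho)\,d\rho\to 0$ takes care of the far tail, giving $c(R)\to 0$.

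\textbf{Step 4 (fractional scaling).} For $K(x)=|x|^{-n-s}$, substitute $y=Ry'$ in $c(R)=2\int_{H_x\setminus B_R}K(x-y)\,dy$ with $x=Re_1$: the domain becomes $H_{e_1}\setminus B_1$, the Jacobian contributes $R^n$, and $K(x-y)=R^{-(n+s)}|e_1-y'|^{-(n+s)}$, producing the overall factor $R^{-s}$, i.e.\ $c(R)=R^{-s}c(1)$.

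The main technical nuisance I anticipate is the splitting argument for the intermediate range $1\le\rho\le R$ in Step~3; once the representation formula in Step~1 is established, everything else is essentially bookkeeping.
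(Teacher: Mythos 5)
Your proof is correct and essentially self-contained, whereas the paper's proof is a short sketch: it records the rotational invariance, asserts the monotonicity via an inclusion argument, and then defers finiteness, the limit $c(R)\to 0$, and the fractional scaling to citations of Imbert, Chambolle--Morini--Ponsiglione, and S\'aez--Valdinoci. Your Step~1 supplies the concrete mechanism that the paper leaves implicit: subtracting the tangent half-space and using the odd parity of $\chi_{\R^n\setminus H_x}-\chi_{H_x}$ under the point reflection through $x$ (which is measure-preserving, fixes each $\R^n\setminus B_\varepsilon(x)$, and preserves $K(x-\cdot)$ by \eqref{ROTAZION}) turns the principal value in \eqref{CURVA} into the absolutely convergent sliver integral $c(R)=2\int_{H_x\setminus B_R}K(x-y)\,dy$, since $\chi_{\R^n\setminus B_R}-\chi_{B_R}=(\chi_{\R^n\setminus H_x}-\chi_{H_x})+2\chi_{H_x\setminus B_R}$. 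From this representation positivity, the nested-domains monotonicity, continuity, the decay as $R\to+\infty$, and the fractional homogeneity all follow by routine bookkeeping; in particular your Step~2 is exactly the inclusion the paper's one-line monotonicity claim is referring to. One small simplification in Step~3: for $R\geq 1$ the bound $m_R(\rho)\leq C\min(\rho,1)$ furnishes an $R$-uniform integrable dominator (this is precisely what \eqref{ROTAZION2} guarantees), and since $m_R(\rho)\to 0$ pointwise as $R\to+\infty$, dominated convergence already yields $c(R)\to 0$ without the explicit $\sqrt R$ splitting; your splitting is nonetheless correct and has the minor advantage of producing a quantitative rate. The Step~4 scaling computation is the standard one and matches the cited reference.
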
 
\begin{proof} We observe that, in virtue of~\eqref{ROTAZION}
and~\eqref{CURVA}, 
and the fact that~$K(x)\not\equiv 0$, we have that~$H^K_{B_R}(x)>0$ for $x\in\partial B_R$,
it does not depend on $x$, and finally $H^K_{B_R}(x)\geq H^K_{B_R'}(x)$ if $R'>R$. Condition \eqref{ROTAZION2} assures that 
$H^K_{B_R}(x)$ is finite for every $R>0$ and also
that $$\lim_{R\to +\infty} c(R)=0,$$
see ~\cites{MR2487027, MR3401008}.
For the computation in the case of fractional kernels, see \cite{SAEZ}.   \end{proof} 

\begin{remark}\label{palla} \upshape
Using Lemma~\ref{lemmacerchi}, we study the evolution of a ball $B_R$
according to the flow in~\eqref{kflow}.
Such evolution is given by a ball $B_{R(t)}$,
where \begin{equation}\label{gvhdiuytbf78uewyr}
\dot R(t)= -c(R(t))\end{equation}
with initial datum $R(0)=R$. 
We define $$C(R):=\int_1^R \frac{1}{c(s)} ds.$$
Then  $C(R)$ is a monotone increasing function, and the
solution $R(t)$ to \eqref{gvhdiuytbf78uewyr} is given implicitly by the formula
\begin{equation}\label{evball} C(R(t))= C(R)-t\qquad {\mbox{ for all }} t>0,
\ \text{ s.t. } \ R(t)>0.\end{equation} 
Let also
$$T_R:=\sup\{t>0\ |\ R(t)>0\}.$$
By~\eqref{gvhdiuytbf78uewyr} and the monotonicity of~$c(\cdot)$,
it is easy to check that $$T_R\leq \frac{R}{c(R)}.$$
Moreover, from~\eqref{gvhdiuytbf78uewyr} we have that
$$T_R=\lim_{\eps\searrow 0} \int_\eps^R \frac{1}{c(s)} ds=C(R)
-\lim_{\eps\searrow0}C(\eps).$$
If $K$ is a fractional kernel, that is $K(x)=\frac{1}{|x|^{n+s}}$, then $C(R)=\frac{1}{c(1)(s+1)}(R^{s+1}-1)$ and $T_R=\frac{R^{s+1}}{c(1)(s+1)}$. \end{remark} 
 
We introduce now the notion of viscosity solutions for the level set equation 
\begin{equation}\label{levelset1}
\begin{cases}
\partial_t u(x,t)+|Du(x,t)|  H^K_{\{y| u(y,t)\geq u(x,t)\}}(x)=0 & {\mbox{for all }}x\in\R^n, \quad t>0,\\
u(x,0)= u_0(x)&{\mbox{for all }}x\in\R^n.
\end{cases} 
\end{equation}
For more details, we refer to \cites{MR2487027, MR3401008}.
The viscosity theory for the classical  mean curvature flow is
contained in~\cite{MR1100211}, see also~\cite{MR2238463}
for a comprehensive level set approach for classical geometric
flows.
 
 \begin{definition}[Viscosity solutions] \label{viscosoldefi} $\,$
\begin{itemize} 
\item[i)] An upper semicontinuous function $u:\R^n\times (0, T)\to \R$ 
is a viscosity subsolution of \eqref{levelset1} if,
for every smooth test function $\phi$ such that $u-\phi$  admits a
global  maximum at $(x,t)$, we have that
either $\partial_t \phi(x,t)\leq 0$ if $D\phi(x,t)=0$, or 
\[ 
\partial_t \phi(x,t)+|D\phi(x,t)| H^K_{\{y| \phi(y,t)\geq \phi(x,t)\}}(x)\leq 0  \]
if $D\phi(x,t)\neq 0$.
\item[ii)]
A lower semicontinuous function $u:\R^n\times (0, T)\to \R$ 
is a viscosity supersolution of \eqref{levelset1} if, for every smooth test
function $\phi$ such that $u-\phi$  admits a global minimum at $(x,t)$, we have
that
either $\partial_t \phi(x,t)\geq 0$ if $D\phi(x,t)=0$, or 
\[ 
\partial_t \phi(x,t)+|D\phi(x,t)| H^K_{\{y| \phi(y,t)> \phi(x,t)\}}(x)\geq 0  \]
if $D\phi(x,t)\neq 0$.
\item[iii)] A  continuous  function $u:\R^n\times (0, T)\to \R$  is a
solution to \eqref{levelset1} if it is both a subsolution and a supersolution. 
\end{itemize} 
 \end{definition}
  
\begin{remark}\upshape 
It is easy to verify that any smooth subsolution (respectively supersolution) is in particular a viscosity subsolution (respectively supersolution).
\end{remark} 

Now,
we recall the Comparison Principle and the existence and uniqueness results for
viscosity solutions to~\eqref{levelset1}. 

\begin{theorem}\label{comparison} 
Suppose that $u_0$ is a bounded and uniformly  continuous function.  
Let $u$ (respectively $v$) be a bounded viscosity subsolution (respectively supersolution) of \eqref{levelset1}.  
If $u(x,0)\leq  u_0(x) \leq v(x,0)$ for any~$x\in\R^n$, then $u\leq v$ on $\R^n\times [0, +\infty)$.

In particular, there exists a unique continuous viscosity solution $u$
to \eqref{levelset1} such that $u(x,0)=u_0(x)$ for any~$x\in\R^n$. 

Moreover if $u_0$ is Lipschitz continuous then 
$u(\cdot, t)$ is Lipschitz continuous, uniformly with respect to $t$,
and
$$ |u(x,t)-u(y,t)| \leq \|Du_0\|_{\infty}|x-y|,$$
for all $x,y\in\R^n$ and $t>0$. \end{theorem}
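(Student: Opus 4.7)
The plan is to follow the classical viscosity framework adapted to the nonlocal geometric setting, as developed in \cite{MR2487027, MR3401008}. The heart of the result is the comparison principle; uniqueness is then immediate, existence follows from Perron's method with spherical barriers provided by Lemma~\ref{lemmacerchi} and Remark~\ref{palla}, and the space Lipschitz bound from translation invariance combined with comparison.

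For the comparison principle itself, I would argue by contradiction. Supposing $\sup_{\R^n\times[0,T]}(u-v)>0$, I would double the variables and work with the standard auxiliary function
\[
\Phi(x,y,t):=u(x,t)-v(y,t)-\frac{|x-y|^2}{2\eps}-\eta\,(|x|^2+|y|^2)-\frac{\alpha}{T-t},
\]
chosen so that a positive maximum is attained at some interior point $(\bar x,\bar y,\bar t)$ with $\bar t>0$, and satisfies the usual estimates $|\bar x-\bar y|^2/\eps\to 0$ and $\eta(|\bar x|^2+|\bar y|^2)\to 0$ along a suitable limit. Applying Definition~\ref{viscosoldefi}.i) to $u$ with test function $y\mapsto v(\bar y,\bar t)+\frac{|\bar x-y|^2}{2\eps}+\eta|y|^2+\frac{\alpha}{T-t}$ frozen at $x=\bar x$, and symmetrically Definition~\ref{viscosoldefi}.ii) to $v$, and then subtracting the two inequalities, one expects the penalization $\alpha/(T-\bar t)^2$ to drive the contradiction, provided the nonlocal curvature terms do not spoil the argument. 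Degenerate situations where the gradient of the test function vanishes are handled by the convention in Definition~\ref{viscosoldefi}.

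The main obstacle is exactly this nonlocal term, since it depends on the global super- and sublevel sets of the candidate solutions themselves, not just on derivatives of the test function at the contact point. The standard remedy, which I would implement, is to split $H^K_{\{\phi(\cdot,\bar t)\ge \phi(\bar x,\bar t)\}}(\bar x)$ into a principal value on a ball $B_r(\bar x)$ (controlled by the $C^{1,1}$ structure of the quadratic penalization and integrable thanks to \eqref{ROTAZION2}) and a far-field part on $\R^n\setminus B_r(\bar x)$. The far-field contributions for $u$ and $v$ are then compared using the fact that, at the maximum of $\Phi$, the super- and sublevel sets of the two test functions coincide up to the translation $\bar x-\bar y$, which tends to $0$; crucially, the asymmetry between closed superlevel sets in item i) and open ones in item ii) of Definition~\ref{viscosoldefi} is designed exactly so that the remaining boundary terms carry the correct sign and vanish in the limit $\eps\to 0$. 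This closes the nonlocal cancellation and, combined with the dissipative term $\alpha/(T-\bar t)^2$, yields the desired contradiction.

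Granted comparison, uniqueness is immediate. For existence I would implement Perron's method: define $u$ as the supremum of all bounded viscosity subsolutions lying below $u_0$ at time zero, with upper and lower barriers provided by the evolutions of large balls containing, respectively, the superlevel sets of $u_0$ at each height (Lemma~\ref{lemmacerchi} and Remark~\ref{palla} guarantee these evolve nicely and that $c(R)\to 0$ as $R\to\infty$, so one can trap $u_0$ between vertically translated spherical evolutions). Standard envelope arguments then show that the upper semicontinuous envelope $u^*$ is a subsolution and the lower semicontinuous envelope $u_*$ a supersolution, so comparison forces $u^*=u_*=u$ continuous and solving \eqref{levelset1}. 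For the Lipschitz bound, I would exploit translation invariance of the equation: if $u_0$ is $L$-Lipschitz with $L=\|Du_0\|_\infty$, then for every $h\in\R^n$ the function $(x,t)\mapsto u(x+h,t)+L|h|$ is again a solution of \eqref{levelset1}, with initial datum $\ge u_0(x)$; comparison therefore gives $u(x,t)\le u(x+h,t)+L|h|$ uniformly in $t$, and swapping $x$ with $x+h$ yields the claimed estimate.
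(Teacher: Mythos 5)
Your treatment of the Lipschitz estimate via translation invariance and comparison is exactly the paper's argument and is complete and correct. For the comparison principle, existence, and uniqueness, however, the paper gives no proof at all: it simply cites Theorems~2 and~3 of \cite{MR2487027} (see also \cite{MR3401008}). Your sketch therefore goes beyond what the paper writes out, and in outline it is consistent with the route taken in those references, but the two steps that are precisely the technical crux of the nonlocal comparison argument are only asserted. First, your auxiliary function $\Phi$ is doubled only in space, not in time; as written, ``freezing $y=\bar y$'' does not produce a legitimate smooth test function for $u$, because $v(\bar y,\cdot)$ is merely lower semicontinuous in $t$, so one must either double the time variable with an additional penalty term or invoke a parabolic Jensen--Ishii type lemma. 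Second, the far-field cancellation of the nonlocal terms is not a matter of the two superlevel sets ``coinciding up to the translation $\bar x-\bar y$''; what one actually needs is an inclusion, derivable from the maximality of $\Phi$, between the closed superlevel set of the test function for $u$ and the strict (open) superlevel set of the test function for $v$, which then produces a definite sign via the monotonicity of $H^K$ under set inclusion --- this is exactly where the asymmetry $\geq$ versus $>$ in the two items of the definition is exploited. Since these are precisely the points settled in \cite{MR2487027}, the paper's choice to cite rather than reprove is being replaced here by a sketch that would need those two gaps closed to be rigorous.
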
 
\begin{proof} 
For the  proof of the existence and uniqueness result,
and for the Comparison Principle,
we refer to  \cite[Theorems  2 and 3]{MR2487027}, see also \cite{MR3401008}. 

Finally  the Lipschitz continuity
is a consequence of the Comparison Principle. Indeed,
for any~$h\in\R^n$, we define $$
v_{\pm}(x, t): =u(x+h, t) \pm \|Du_0\|_{\infty} |h|.$$ 
Then, if $u$ is a viscosity solution to 
\eqref{levelset1}, we have that also $v_+$ and~$v_-$
are  viscosity solutions to the same equation.
Moreover,
$$v_-(x,0)=u_0(x+h)-\|Du_0\|_{\infty} |h|\leq u_0(x)=u(x,0)\leq 
u_0(x+h)+\|Du_0\|_{\infty} |h|= v_+(x,0) ,$$
which implies the desired Lipschitz bound. 
\end{proof} 

\begin{remark}\upshape \label{geosigma}
 Let $E\subset\R^n$ be a closed set in $\R^n$ and let $u_E(x)$ be
 a bounded Lipschitz continuous function such that 
\begin{equation}\label{ic} \partial E= \{x\in\R^n\text{ s.t. } u_E(x)=0\}
=\partial \{x\in\R^n\text{ s.t. }  u_E(x)>0\}\qquad{\mbox{and}}\qquad
E= \{x\in\R^n\text{ s.t. }  u_E(x)\geq 0\}. \end{equation} 
Let $u_E$ be the unique viscosity solution to \eqref{levelset1}
with initial datum $u_E$ and define 
\[E^+(t):=\{x\in\R^n\text{ s.t. } u_E(x,t)\geq 0\}\qquad{\mbox{and}}
\qquad E^-(t):=\{x\in\R^n\text{ s.t. }   u_E(x,t)> 0\}. \]
The level set flow is defined as $$
\Sigma_E(t)=\{x\in\R^n\text{ s.t. } |\ u_E(x,t)=0\}.$$ 
Due to  the fact that the operator in \eqref{levelset1} is geometric, which
means that if $u$ is a subsolution (resp. a supersolution) then also $f(u)$ is a subsolution (resp. a supersolution) for all monotone increasing functions $f$,   the following result holds:
if~$v_0$ is
a Lipschitz continuous function which satisfies \eqref{ic} and $v$ 
is the viscosity solution to \eqref{levelset1} with
initial datum $v_0$,
then  $E^+(t)=\{x\in\R^n\text{ s.t. }  v(x,t)\geq 0\}$ and
$E^-(t)=\{x\in\R^n\text{ s.t. } v(x,t)> 0\}$. 

In particular, the inner flow, the outer flow and the level set flow do not depend on the choice of
the initial datum $u_E$ but only on the set $E$.
\end{remark} 

\begin{remark}\label{palla2}  In the setting of Remark \ref{palla},
one can show that $u(x,t)=R(t)-|x|$, for $t\in [0, T_R)$,
is a viscosity solution to \eqref{levelset1}.  
Therefore, in this case
we have that $E= B_R$ and~$E^+(t)=\overline{E^-(t)}=B_{R(t)}$.
\end{remark} 

An important consequence of the Comparison Principle stated
in Theorem~\ref{comparison}, is the following result (in which we also use
the notation for the distance function introduced in~\eqref{defdef}
and~\eqref{signed}).

\begin{corollary} \label{cpgeometric}$\,$ \begin{itemize}\item[i)] 
Let $F\subset E$ two closed sets  in $\R^n$ such that  
$d(F,E)=\delta>0$. Then $F^{+}(t)\subset E^{-}(t)$ for all $t>0$,
and the map  $t\to d(F^{+}(t), E^{-}(t))$ is
nondecreasing. 
\item[ii)] Let $v:\R^n\times[0, T)\to \R$ be a bounded uniformly continuous
viscosity supersolution to \eqref{levelset1}, and assume that $$
F\subseteq \{x\in\R^n\text{ s.t. } v(x,0)\geq 0\}.$$
Then $$F^+(t)\subseteq \{x\in\R^n\text{ s.t. } v(x,t)\geq 0\},$$ for all $t\in (0, T)$. 

Moreover, if $$d(F, \;\{x\in\R^n\text{ s.t. } v(x,0)> 0\})=\delta>0,$$ then $$
F^+(t)\subseteq \{x\in\R^n\text{ s.t. } v(x,t)>0\},$$ for all $t\in (0, T)$, and 
$$d\Big(F^{+}(t),   \{x\in\R^n\text{ s.t. } v(x,t)>0\}\Big)\geq \delta .$$ 
\item[iii)]  Let $w:\R^n\times[0, T)\to \R$
be a bounded  uniformly continuous viscosity subsolution to \eqref{levelset1},
and assume that $$E\supseteq \{x\in\R^n\text{ s.t. } w(x,0)\geq 0\}).$$
Then $$E^+(t)\supseteq \{x\in\R^n\text{ s.t. } w(x,t)\geq 0\},$$ for all $t\in (0, T)$. 

Moreover, if $$d(E,\{x\in\R^n\text{ s.t. } w(x,0)\geq 0\})=\delta>0,$$
then $$E^{-}(t)\supseteq \{x\in\R^n\text{ s.t. } w(x,t)\geq 0\},$$ for all~$t
\in(0,T)$, and 
$$d\Big(E^{-}(t), \;\{x\in\R^n\text{ s.t. } w(x,t)\geq 0\}\Big)\geq \delta.$$ 
\end{itemize} 
\end{corollary}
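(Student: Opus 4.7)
The overall strategy is to reduce each of the three items to the Comparison Principle stated in Theorem~\ref{comparison}, by choosing appropriate bounded Lipschitz functions representing the given sets and exploiting the fact that the level set equation~\eqref{levelset1} is geometric: in particular, if $u$ is a solution then so is $u+c$ for every constant $c$, and, more generally, $\phi(u)$ is a sub/supersolution whenever $u$ is and $\phi$ is continuous and nondecreasing (cf. Remark~\ref{geosigma}).

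For item~i) I would first establish the purely geometric inequality
\[
d_F(x)+\delta\le d_E(x)\qquad\text{for all }x\in\R^n,
\]
where $d_F,d_E$ denote the signed distance functions~\eqref{signed}. This follows by a short case analysis (whether $x\in F$, $x\in E\setminus F$ or $x\notin E$): in each case any segment joining $x$ to the nearer boundary crosses both $\partial F$ and $\partial E$, and the hypothesis $d(\partial F,\partial E)\ge\delta$ is used via the triangle inequality. I would then set $u_F:=\max\{-M,\min\{M,d_F\}\}$ and $u_E$ analogously, for $M$ large; both are bounded and $1$-Lipschitz representatives of $F$ and $E$ in the sense of~\eqref{ic}, and $u_F+\delta\le u_E$ pointwise. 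Since the equation is geometric, $(x,t)\mapsto u_F(x,t)+\delta$ is again a viscosity solution, with initial datum dominated by $u_E(\cdot,0)$. Theorem~\ref{comparison} yields $u_F(\cdot,t)+\delta\le u_E(\cdot,t)$ for all $t>0$, hence
\[
F^{+}(t)=\{u_F(\cdot,t)\ge0\}\subseteq\{u_E(\cdot,t)\ge\delta\}\subseteq E^{-}(t).
\]
Because the Lipschitz constant of $u_E(\cdot,t)$ is bounded by that of $u_E$, which is~$1$, again by Theorem~\ref{comparison}, the superlevel set $\{u_E(\cdot,t)\ge\delta\}$ lies at distance at least $\delta$ from $\{u_E(\cdot,t)=0\}$, giving $d(F^{+}(t),E^{-}(t))\ge\delta$. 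The monotonicity $t\mapsto d(F^{+}(t),E^{-}(t))$ then follows by restarting the flow at an arbitrary time $t_0$ from the pair $F^{+}(t_0)\subset E^{-}(t_0)$ with gap $\delta(t_0)$ and re-running the same argument; the restart is legitimate thanks to the uniqueness statement in Theorem~\ref{comparison} and the set-dependence of $E^{\pm}(t)$ highlighted in Remark~\ref{geosigma}.

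For items~ii) and~iii) I would proceed analogously, the only novelty being the construction of a Lipschitz representative of $F$ (resp.~$E$) compatible with the given sub/supersolution. For ii), starting from $F\subseteq\{v(\cdot,0)\ge0\}$, I would build $u_F$ representing $F$ with $u_F\le v(\cdot,0)$ pointwise: a convenient choice is $u_F(x):=\min\bigl\{v(x,0),\,\max\{-M,\min\{M,d_F(x)\}\}\bigr\}$, whose level set properties~\eqref{ic} are verified case by case. Then Theorem~\ref{comparison} gives $u_F(\cdot,t)\le v(\cdot,t)$, whence $F^{+}(t)\subseteq\{v(\cdot,t)\ge0\}$. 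The strict version under $d(F,\{v(\cdot,0)>0\})=\delta>0$ is obtained exactly as in item~i), by comparing $u_F(\cdot,t)+\delta$ with $v(\cdot,t)$ and exploiting the Lipschitz preservation. Item~iii) is completely symmetric, using a representative $u_E$ of $E$ with $u_E\ge w(\cdot,0)$ built by a dual $\max$--truncation.

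The main technical obstacle is precisely the compatibility of the min/max construction with the level-set conditions~\eqref{ic}, since the interaction between the support of the Lipschitz representative and the zero set of the given sub/supersolution can a priori create spurious points of $\{u_F=0\}$ outside $\partial F$. I would deal with this by an approximation argument: replace $F$ by a slightly shrunk variant $F_\eta:=\{d_F\ge\eta\}$ (or enlarged one in item~iii)), for which the strict separation from $\{v(\cdot,0)=0\}$ holds and the previous construction is unambiguous; apply the strict version of the statement to $F_\eta$, and then let $\eta\searrow0$ using the continuity of the flow $E\mapsto E^{+}(t)$ encoded in Remark~\ref{geosigma} and the stability of viscosity solutions under uniform convergence of the initial data provided by Theorem~\ref{comparison}.
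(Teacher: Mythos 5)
For item~(i) your argument is essentially the one in the paper: the inequality $d_F+\delta\le d_E$, truncated signed-distance representatives, the Comparison Principle of Theorem~\ref{comparison}, the $1$-Lipschitz preservation to turn the pointwise inequality into the distance bound, and restarting the flow to get monotonicity. No issues there.

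For items~(ii) and~(iii) your route is genuinely different from the paper's and it has a gap. You propose to build a representative of $F$ directly by
\[
u_F(x):=\min\bigl\{v(x,0),\,\max\{-M,\min\{M,d_F(x)\}\}\bigr\},
\]
and then compare $u_F$ with $v$. As you yourself flag, this $u_F$ need not satisfy~\eqref{ic}: the zero level set $\{u_F=0\}$ equals $\partial F\cup\bigl(\{v(\cdot,0)=0\}\cap F\bigr)$, which can be strictly larger than $\partial F$ whenever $v(\cdot,0)$ vanishes somewhere in ${\rm int}(F)$ -- and the hypothesis $F\subseteq\{v(\cdot,0)\ge 0\}$ permits exactly that. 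Your proposed remedy, replacing $F$ by $F_\eta:=\{d_F\ge\eta\}$, does not resolve this: $\{v(\cdot,0)=0\}$ may intersect ${\rm int}(F_\eta)$ for every $\eta>0$ (take $v(\cdot,0)$ vanishing at an interior point of $F$), so no strict separation is obtained and $u_{F_\eta}$ has the same defect. Moreover, in the ``strict'' variant you propose to compare $u_F(\cdot,t)+\delta$ with $v(\cdot,t)$; but this requires $u_F(\cdot,0)+\delta\le v(\cdot,0)$ pointwise, and this does \emph{not} follow from $d(F,\{v(\cdot,0)>0\})=\delta$ because $v$ is only assumed uniformly continuous, not $1$-Lipschitz, so it can grow arbitrarily slowly away from its zero set.

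The paper's proof sidesteps both difficulties by never trying to construct a special representative of $F$ tied to $v$. It introduces the intermediate closed set $E:=\overline{\{v(\cdot,0)>0\}}$, applies item~(i) to the pair $F\subset E$ (which only involves Lipschitz signed-distance representatives, so~\eqref{ic} holds automatically), and then compares the viscosity solution $u_E$ with initial datum $u_E(\cdot,0)=v(\cdot,0)$ against the supersolution $v$ -- here the comparison is trivial at $t=0$. This gives $E^-(t)\subseteq\{v(\cdot,t)>0\}$ and hence the claim. For the non-strict case $d(F,\{v(\cdot,0)>0\})=0$, the approximation is performed on the \emph{function} side rather than the set side: $v$ is replaced by $v+\eps$, and the uniform continuity of $v(\cdot,0)$ (which is a hypothesis) gives $d(F,\{v(\cdot,0)>-\eps\})\ge\delta_\eps>0$, allowing the strict case to apply; letting $\eps\searrow0$ then yields $F^+(t)\subseteq\{v(\cdot,t)\ge 0\}$. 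You should adopt this two-step structure (reduce to item~(i), then shift $v$ by $\eps$) rather than modifying the representative of $F$.
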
 
\begin{proof}
First, we prove i). Since $F\subseteq E$ and $d(E, F)=\delta$,
then it is easy to check that $d_E(x)\geq d_F(x)+\delta$. 

Let now~$C>2 \delta$ and define $$u_F(x):
= \max \big\{ -C-\delta, \; \min\{d_F(x), C-\delta\}\big\}
\qquad{\mbox{and}}\qquad u_E(x):= \max \big\{
-C, \;\min\{d_E(x), C\}\big\}.$$ So, again we obtain that~$u_F(x)+\delta\leq u_E(x)$. 
Therefore,
by the Comparison Principle in
Theorem \ref{comparison}, we get that $u_E(x,t)\geq u_F(x,t)+\delta$
for every $t>0$. 

This in turn implies that
$F^+(t)\subset E^{-}(t)$ and moreover that 
$d(F^+(t), E^-(t))\geq \delta$, due to the fact that  $u_E(x,t)$ and $u_F(x,t)$
are $1$-Lipschitz in $x$, by Theorem \ref{comparison}.  

If we repeat the same argument with initial data $E^{-}(t)$ and $F^{+}(t)$,
we obtain the desired statement in i). 

We prove now ii).
For this, we distinguish two cases:
if
\begin{equation}\label{9iwdkc83837yryff}
d(F,  \{x\in\R^n\text{ s.t. } v(x,0)>0\})=\delta>0,\end{equation}
we let $$E:=\overline{ \{x\in\R^n\text{ s.t. } v(x,0)>0\}}.$$
Then, by item i), we get that $F^+(t)\subseteq E^{-}(t)$ and
$d(F^{+}(t), E^{-}(t))\geq \delta$. Let $u_E$ be the unique viscosity
solution to \eqref{levelset1} with $u_E(x,0)=v(x,0)$. Then, by the
Comparison Principle
in Theorem \ref{comparison}, we get that $u_{E}(x,t)\leq v(x,t)$ for all $t\in (0, T)$. 
In turn, this implies that $E^{-}(t)\subseteq   \{x\in\R^n\text{ s.t. } v(x,t)>0\}$,
and this permits to conclude that ii) holds true, under the assumption in~\eqref{9iwdkc83837yryff}.

If, on the other hand, we have that~\eqref{9iwdkc83837yryff} does not hold,
we write
$$d(F,  \{x\in\R^n\text{ s.t. } v(x,0)>0\})\geq 0.$$ Then,
by the uniform continuity of $v(\cdot,0)$,
we have that for every $\eps>0$ there exists $\delta_\eps>0$ such that $$
d(F,\{x\in\R^n\text{ s.t. } v(x,0)> -\eps\})\geq \delta_\eps>0.$$
So we repeat the argument above (based on~\eqref{9iwdkc83837yryff})
substituting $v(x,t)$ with the function $v(x,t)+\eps$ and $E$ with $\overline{ \{x\in\R^n\text{ s.t. } v(x,0)>-\eps\}}$. 
This gives that  $F^+(t)\subseteq E^{-}(t)$,  and  $E^{-}(t)
\subseteq   \{x\in\R^n\text{ s.t. } v(x,t)>-\eps\}$ for all $\eps>0$.
Therefore $F^+(t)\subseteq  \{x\in\R^n\text{ s.t. } v(x,t)\geq 0\}$. 

This completes the proof of ii).
The proof of iii) is completely analogous, and we omit it. 
\end{proof} 

\begin{remark}\upshape\label{compact} 
Observe that if $E$ is a compact set and in particular $E\subseteq B_R$
for some $R>0$, then by Remark \ref{palla2} and Corollary \ref{cpgeometric} we have that 
$E^{+}(t)\subseteq B_{R(t)}$ where $R(t)<R$ has been defined in Remark \ref{palla}. In particular, there exists $T_E\leq T_R$ such that 
$T_E=\sup\{t>0  ${\mbox{ s.t. }}int$ \,E^+(t)\neq \varnothing\}$. 
\end{remark}

Now, we define the lower and upper semicontinuous envelopes of a family of sets $C(t)\subseteq \R^n$ as follows:
\[C_\star(t):=\bigcup_{\eps>0}\;\;\bigcap_{0\leq t-\eps<s<t+\eps} C(s)\qquad{\mbox{and}}\qquad
C^\star(t):=\bigcap_{\eps>0}\;\;\bigcup_{0\leq t-\eps<s<t+\eps} C(s).\]
We have that $C_\star(t)\subseteq C(t)\subseteq C^\star(t)$. Moreover for any sequence $(x_n, t_n)\to (x,t)$, if $x_n\in \overline{C^\star(t_n)}$ then $x\in \overline{C^\star(t)}$, 
whereas, if $x_n\not \in\,$int$\,(C_\star(t_n))$, then $x\not \in\,$int$\,(C_\star(t))$.

If $C^\star(t)=C(t)=C_\star(t)$ for every $t$, we say that the family is continuous. 

We also
need a result to compare geometric sub and supersolutions to \eqref{kflow} with the level set flow, see \cite{MR3401008}. 

\begin{proposition}\label{subgeometrico}
Let $C(t)\subseteq \R^n$ for $t\in [0,T]$, be a continuous family of
sets with compact Lipschitz boundaries, which are
piecewise of class $C^{1,1}$
outside a finite number of angular\footnote{As customary,
a point of a piecewise~$C^{1,1}$ curve is called ``angular''
if the tangent directions from  different sides are different.} points. 

Fix $E\subset\R^2$ and $u_E$ a bounded Lipschitz
continuous function such that $$E=\{x\in\R^n\text{ s.t. }u_E(x)\geq 0\}
\qquad{\mbox{and}}\qquad\partial E=\{x\in\R^n\text{ s.t. }u_E(x)=0\}.$$
Consider the inner and outer flows associated to $E$, according to \eqref{outin}. 

\begin{itemize}\item[i)]
Assume  that there exists $\delta>0$ such that at every~$
x\in \partial C(t)$ where $\partial C(t)$
is~$C^{1,1}$ there holds 
\begin{equation}\label{supergeo} 
\partial_t x\cdot \nu(x)\geq -H^K_{C(t)}(x)+\delta.\end{equation}
Moreover, assume that
\begin{equation}\label{supergeo2} \begin{split}
&{\mbox{at every angular point $x\in \partial C(t)$
there exists 
$r_0>0$ such that}}\\
&{\mbox{the set $B(x,r)\cap C(t)$  is convex for all $r<r_0$. }}\end{split}\end{equation}
Then, if~$E  \subseteq C(0)$, with $d(E, C(0))=k\geq 0$, it holds that  $E^+(t)\subset C(t)$ for all $ t\in [0,T)$, with  $d(E^+(t), C(t))\geq k\geq 0$.
\item[ii)]  Assume  that there exists $\delta>0$ such that
at every $x\in \partial C(t)$ where $\partial C(t)$ is~$C^{1,1}$
it holds 
\begin{equation}\label{subgeo} \partial_t x\cdot \nu(x)\leq -H^K_{C(t)}(x)-\delta.\end{equation}
Moreover, assume that
\begin{equation}\label{subgeo2} \begin{split}&
{\mbox{at every angular point $x\in \partial C(t)$
there exists 
$r_0>0$ such that}}\\
&{\mbox{
the set $B(x,r)\cap (\R^n\setminus C(t))$  is convex for all $r<r_0$. }}\end{split}\end{equation}
Then, if  $E \supseteq C(0)$,  it holds that  $E^+(t)\supseteq C(t)$ for all $ t\in[0, T)$. 

Moreover, if
$d(C(0), \{x\in\R^n\text{ s.t. }u_E(x)> 0\})=k>0$, it holds that  $E^-(t)\supset C(t)$ for all $ t\in[0,T)$, with  $d(E^-(t), C(t))\geq k$.
\end {itemize} 
\end{proposition}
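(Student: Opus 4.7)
The plan is to prove Proposition \ref{subgeometrico}, part i), by constructing a bounded viscosity supersolution $v(x,t)$ of the level set equation \eqref{levelset1} whose $0$-superlevel set is exactly $C(t)$, and then invoking Theorem \ref{comparison}; part ii) follows by the symmetric construction of a subsolution from \eqref{subgeo}--\eqref{subgeo2}. Concretely, I set $v(x,t) := \eta(d_{C(t)}(x))$, where $d_{C(t)}$ is the signed distance with positive sign inside $C(t)$ and $\eta \colon \R \to \R$ is a bounded, nondecreasing Lipschitz function, smooth off $0$, with $\eta^{-1}(0) = \{0\}$ and $\eta'(0^{\pm}) = 1$. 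Since the level set flow does not depend on the choice of Lipschitz initial datum representing $E$ (Remark \ref{geosigma}), I may assume $u_E(\cdot,0) \leq v(\cdot,0)$ globally: the inclusion $E \subseteq C(0)$ ensures that $\min(u_E(\cdot,0), v(\cdot,0))$ still represents $E$. Once $v$ is shown to be a viscosity supersolution, Theorem \ref{comparison} yields $u_E \leq v$, whence $E^+(t) = \{u_E(\cdot,t) \geq 0\} \subseteq \{v(\cdot,t) \geq 0\} = C(t)$.

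The verification that $v$ is a supersolution splits into two cases. At a smooth point $x_0 \in \partial C(t_0)$, let $\phi$ be a smooth test function such that $v - \phi$ has a global minimum at $(x_0,t_0)$, normalized so that $\phi(x_0,t_0) = 0$. Then $\phi \leq v$ globally, so $\{\phi(\cdot,t_0) \geq 0\} \subseteq C(t_0)$; by monotonicity of the nonlocal curvature under set inclusion, $H^K_{\{\phi(\cdot,t_0)\geq 0\}}(x_0) \geq H^K_{C(t_0)}(x_0)$. Since $v$ is smooth in a space-time neighborhood of $(x_0,t_0)$, one has $D\phi(x_0,t_0) = Dv(x_0,t_0)$, $\partial_t\phi(x_0,t_0) = \partial_t v(x_0,t_0)$, and differentiating the identity $v(\gamma(t),t) = 0$ along a normal evolution gives $\partial_t v(x_0,t_0) = |Dv(x_0,t_0)|\,\partial_t x \cdot \nu(x_0,t_0)$. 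Together with \eqref{supergeo}, this produces the supersolution inequality with a strict slack $\delta |D\phi|$.

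The main obstacle is the treatment of angular points, where $v$ is only Lipschitz. The convexity assumption \eqref{supergeo2} makes $C(t)$ locally an outward-pointing corner, so $v$ has a concave corner at $x_0$; a direct geometric argument then forces any smooth $\phi$ touching $v$ from below at $(x_0,t_0)$ to satisfy $D\phi(x_0,t_0) = 0$, reducing the supersolution condition to $\partial_t \phi(x_0,t_0) \geq 0$. To verify this cleanly I would approximate $C(t)$ from inside by a family $C^\varepsilon(t) \subseteq C(t)$ of $C^{1,1}$ sets obtained by rounding each convex corner inward on scale $\varepsilon$. By continuity of the $K$-curvature under $C^{1,1}$ convergence (as exploited in the proof of Theorem \ref{positive}) together with the uniform $\delta$-slack at regular points, each $C^\varepsilon(t)$ still satisfies the smooth version of \eqref{supergeo} with slack $\delta/2$; the smooth-boundary argument above then applies to $v^\varepsilon := \eta(d_{C^\varepsilon(t)})$, and stability of viscosity supersolutions under uniform convergence transfers the conclusion to $v$ as $\varepsilon \searrow 0$.

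For the quantitative estimate $d(E^+(t), C(t)) \geq k$ when $d(E,C(0)) = k > 0$, I would repeat the above with the $k$-inner parallel family $C_k(t) := \{x : d_{C(t)}(x) \geq k\}$ in place of $C(t)$. For sufficiently small $k$ this family is nested at distance exactly $k$ inside $C(t)$ and inherits \eqref{supergeo}--\eqref{supergeo2}: the complement's local convexity is preserved under inner parallel translation, and the geometric normal velocity at $C^{1,1}$ points is unchanged by the shift. Thus the containment $E^+(t) \subseteq C_k(t)$ and hence the distance bound follow immediately from the previous steps.
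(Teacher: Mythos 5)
Your construction of a barrier $v(x,t) = \eta(d_{C(t)}(x))$ is in the same spirit as the paper's $u_\eps(x,t) = \max\{0,\min\{\eps,d_{C(t)}(x)\}\}$, but your verification that $v$ is a viscosity supersolution has a genuine gap: you only check the supersolution inequality at $x_0 \in \partial C(t_0)$ (plus angular points). The inequality must be verified at \emph{all} $x_0 \in \R^n$, and the hard case is precisely $x_0$ in the shell $\{0<|d_{C(t_0)}|<a\}$ where $\eta' \neq 0$. There the sublevel set $\{\phi(\cdot,t_0) > \phi(x_0,t_0)\}$ is contained in a \emph{parallel} set $C_\lambda(t_0)$, so the monotonicity step gives you $H^K_{\{\phi > \phi(x_0)\}}(x_0) \geq H^K_{C_\lambda(t_0)}(x_0)$, while the hypothesis \eqref{supergeo} only controls $H^K_{C(t_0)}$ at the foot point $x_0' \in \partial C(t_0)$. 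You would need $H^K_{C_\lambda(t_0)}(x_0) \geq H^K_{C(t_0)}(x_0') - \delta$. This is not automatic: it is the paper's estimate \eqref{reg}, and to get it one must (a) introduce a smallness parameter $\eps$ and restrict the shell to $\lambda\in[0,\eps]$, (b) invoke the continuity of $H^K$ under $C^{1,1}$ convergence (from \cite{MR3401008}), and (c) use compactness in $x$ and $t$ to produce a uniform modulus $\eta(\eps)$, then choose $\eps$ so that $\eta(\eps)\le\delta$. This is exactly where the slack $\delta$ in \eqref{supergeo} is consumed, and your $\eta$ as described (bounded, smooth off $0$, $\eta'(0^\pm)=1$, no explicit cutoff scale) does not carry such a parameter, so the argument as written does not close.

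Your final paragraph has the same flaw: you assert that the inner parallel family $C_k(t)$ ``inherits \eqref{supergeo}'' because the normal velocity is unchanged by the shift, but \eqref{supergeo} also involves $H^K_{C(t)}$, and $H^K_{C_k(t)}$ at depth $k>0$ need not be within $\delta$ of $H^K_{C(t)}$ at the foot point, since $k$ is fixed rather than small. The paper instead derives the distance bound $d(E^+(t),C(t))\ge k$ from the spatial Lipschitz estimate of Theorem \ref{comparison}, encapsulated in Corollary \ref{cpgeometric}; you should do the same rather than rerun the whole construction on $C_k$. A smaller remark: at an angular point with the local convexity \eqref{supergeo2}, with your choice of $v$ (negative and decaying linearly in the full exterior cone, which spans an angle $>\pi$), the set of admissible test functions touching $v$ from below is in fact \emph{empty}, not merely restricted to $D\phi=0$; the rounding/stability detour you sketch is therefore unnecessary, and is in any case itself not justified since you have not shown the rounded sets $C^\eps(t)$ satisfy \eqref{supergeo} with uniform slack.
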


\begin{proof} We give just a sketch of the proof of i), since it relies on classical arguments in viscosity solution theory and level set methods (the proof  of ii) is analogous), 
see \cite{MR3401008}.

For~$\eps>0$ sufficiently small,
we define the function $$u_\eps(x,t):= \max\big\{ 0, \;\min\{\eps, d_{C(t)}(x)\}\big\}.$$  
We claim that for $\eps>0$ sufficiently small (depending on $\delta$ in \eqref{supergeo}) the function $u_\eps$ is a viscosity supersolution to \eqref{levelset1}. If the claim is true, then
the statement in~i) is a direct consequence of the Comparison Principle in
Corollary \ref{cpgeometric}. 

To prove the claim, for  every $\lambda\in [0, \eps]$, we define
$$C_\lambda(t):=\{x\in C(t)\text{ s.t. }
d_{C(t)}(x)\geq \lambda\}.$$ 
Note that $u_\eps=0$ on $\overline{\R^n\setminus C(t)}$,
$u_\eps=\lambda$ on  $\partial C_\lambda(t)$ and~$u_\eps=\eps$ on $C_\eps(t)$. 

Due to the regularity assumption on $C(t)$, we have that for every $
\lambda\in [0, \eps]$, the sets $C_\lambda(t)$ are Lipschitz
continuous,
piecewise $C^{1,1}$ outside a finite number of angular points
and satisfy the following property:
at every angular point $x\in \partial C_\lambda(t)$
there exists 
$r_0>0$ such that  
the set $B(x,r)\cap C_\lambda(t)$  is convex for all $r<r_0$. Therefore assumption \eqref{supergeo2} is satisfied for every $C_\lambda$, with $ \lambda\in [0, \eps]$. 

Now we observe that, due to the regularity assumptions
and to \eqref{supergeo2}, we have that
for every  $x_\eps\in  \partial C_\eps(t)$ there exists   $x_0\in \partial C (t)$ 
such that  $|x_0-x_\eps|=\eps$ ($x_0$ is unique if  $\partial C_\eps(t)$ is $C^{1,1}$ at $x_\eps$, and it is eventually non unique if $x_\eps$ is an angular point).  Moreover $\partial C(t)$ is $C^{1,1}$ around $x_0$. 

Assume first that~$x_\eps$ is an angular point of~$
\partial C_\eps(t)$.
We fix $\zeta(\eps, x_\eps, t)=\zeta_\eps>0$
such that~$\partial C_\eps(t)$ is $C^{1,1}$ 
at every $x\in   B(x_\eps, \zeta_\eps )\cap \partial C_\eps(t)$, $x\neq x_\eps$,   $\partial C(t)$ is  $C^{1,1}$ 
at every $x\in   B(x_0, \zeta_\eps )\cap \partial C  (t)$  (so that the $K$-curvature is well defined) and moreover there holds 
\[
H^K_{\partial C_\eps(t)}(x)>\sup_{y\in B(x_0, \zeta_\eps) \cap
\partial C(t)} H^K_{\partial C(t)}(y)\qquad {\mbox{for all }}
x\in B(x_\eps, \zeta_\eps )\cap \partial C_\eps(t),\;
x\neq x_\eps, {\mbox{ and for all }} t\in [0, T]. \]
Since the  angular points $x_\eps$ of $\partial C_\eps(t)$  are finite for every $t\in [0,T]$, and the interval $[0, T]$ is compact, 
we can choose $\zeta_\eps$ independent of $x_\eps$ and $t$. 
Now consider the case in which $\partial C_\eps(t)\cap B(x_\eps, \zeta_\eps)$ is $C^{1,1}$. 
Then we use the  continuity of the $K$-curvature as
$\eps\to 0$ (see \cite{MR3401008}) to see that
there exists $\eta_\eps=\eta(\eps, x_\eps, \zeta_\eps, t)>0$ such that 
\[ |H^K_{C(t)}(x_0)-  H^K_{C_\eps(t)}(x_\eps)|\leq 
 \eta_\eps.\]
Finally, due to the compactness of
$$\partial C_\eps(t)\setminus \bigcup_{i\in I} B(x_i, \zeta_\eps),$$
where $x_i$ are the angular points of $\partial C_\eps(t)$, and due to compactness of 
the time interval $[0,T]$, we observe that we may
choose~$\eta_\eps=\eta(\eps, \zeta_\eps)$ independent of $x_\eps$ and~$ t$. 
In conclusion we get that there exists $\eta_\eps>0$ depending on $\eps$ such that for all $x_\eps\in\partial C_\eps(t)$ which are not angular points there holds 
\begin{equation}\label{reg} H^K_{C_\eps(t)}(x_\eps)\geq H^K_{C(t)}(x_0)-\eta_\eps\qquad \text{ where $|x_0-x_\eps|=\eps$.}\end{equation}
  
The same argument can be repeated for all $\lambda\in (0, \eps)$, and so for every $\lambda$ there exists $\eta_\lambda>0$ such that
\eqref{reg} holds.  We define 
 \begin{equation}\label{eta}\eta=\eta(\eps)=\sup_{\lambda\in(0, \eps]}\eta_\lambda.\end{equation}

Now we distinguish different cases according to the position of
the point~$x$, in order to prove that  $u_\eps$ is a viscosity supersolution to \eqref{levelset1}.  

If  $x\in \,$int$\,(\R^n\setminus C(t))$, or  $x\in$ int$(C_\eps(t))$,
then actually the equation in~\eqref{levelset1}
is trivially satisfied since $|D u_\eps(x,t)|=0$
and $\partial_t u_\eps(x,t)= 0$ by the continuity properties of
the families $C(t)$ and $C_\eps(t)$. 

Now we suppose that~$x\in \partial C_\eps(t)$. 
Then it is easy to show that the set of test functions is empty, so again
the equation in~\eqref{levelset1} is trivially satisfied. 

We finally assume that $x \in \partial C_\lambda(t)$ for
some $\lambda\in [0,\eps)$. Observe that 
at every angular point $x  \in \partial C_\lambda(t)$, by the   assumption \eqref{supergeo2}
(which holds also for $C_\lambda(t)$ as proved above), the set of test functions is empty 
so the equation in~\eqref{levelset1} is trivially satisfied.
So assume that $C_\lambda(t)$ is locally of
class $C^{1,1}$ around $x$. We fix $x_0\in \partial C(t)$
such that $|x-x_0|=\lambda$. So, if  $\nu(x)$ is the outer normal to $\partial C_\lambda(t)$ at $x$, then $\nu(x)=\frac{x_0-x}{|x-x_0|}$ and $\nu(x)=\nu(x_0)$, so it coincides with the outer normal  to $\partial C(t)$ at $x_0$ and $ \partial_t x_0\cdot \nu(x_0)= \partial_t x\cdot \nu(x)$.
Moreover, due to \eqref{reg}, and the definition of $\eta$ in \eqref{eta},we get 
\begin{equation}\label{test1}H^K_{C_\lambda(t)}(x) \geq H^K_{C (t)} (x_0) -\eta.\end{equation} 
Let $\phi$ be a test function for $u_\eps$ at $(x,t)$,
then~$D\phi(x,t)=-\rho\nu(x) $ for
some~$\rho\in [0,1]$ for $\lambda=0$
and $D\phi(x,t)=-\nu(x)$ for $\lambda>0$,
whereas $\phi_t(x,t)=\rho  \partial_t x\cdot \nu(x)$
(with $\rho=1$ as $\lambda>0$). 
%
Moreover 
\begin{equation}\label{test}
H^K_{C_\lambda(t)}(x)= H^K_{\{y|u_\eps(y, t)\geq \lambda\}} (x)
\leq H^K_{\{y|\phi(y, t)>\lambda\}} (x).
\end{equation}
Therefore, computing the equation at $(x,t)$, we get, using \eqref{test1}, \eqref{test} and  \eqref{supergeo},  
\begin{eqnarray*}
&&\partial_t \phi(x,t)+|D\phi(x,t)|  H^K_{\{y|\phi(y, t)
>\phi(x,t)\}} (x)\geq \rho \partial_t x\cdot \nu(x)
+\rho H^K_{C_\lambda(t)} (x)\\&&\qquad\quad \geq   
 \rho \partial_t x_0\cdot \nu(x_0)
 +\rho H^K_{C(t)} (x_0)-\rho \eta\geq \rho(\delta-\eta).  \end{eqnarray*}
So, if we choose $\eps>0$ sufficiently small, according to $\delta$,  so that $\eta=\eta(\eps)\leq \delta$, then the previous inequality gives that 
$u_\eps$ is a supersolution to \eqref{levelset1}, as we claimed. 
 \end{proof} 

Now we present
the following extension to the noncompact case of
Proposition \ref{subgeometrico}.

\begin{corollary} \label{unboundedcoro} 
Let $C(t)\subseteq \R^n$ for $t\in [0, T)$, be a continuous family of
sets with Lipschitz boundaries, which are piecewise of class $C^{1,1}$
outside a finite number of angular points, and
such that there exists 
$R>0$ such that~$C(t)\cap (\R^n\setminus B_R)$ is
of class $C^{1,1}$ for all $t$. 

Fix $E\subset\R^n$ and $u_E$ a bounded Lipschitz continuous function such
that $$E=\{x\in\R^n\text{ s.t. }u_E(x)\geq 0\}\qquad{\mbox{and}}\qquad
\partial E=\{x\in\R^n\text{ s.t. }u_E(x)=0\},$$
and consider the inner and outer flows associated to $E$, according to \eqref{outin}. 

\begin{itemize}\item[i)]
Assume that there exists $\delta>0$ such that \eqref{supergeo} 
holds for every $x\in \partial C(t)\cap B_{4R}$. Suppose also
that~\eqref{supergeo2} holds true.

Moreover, assume that 
there exists $\lambda_0$  such that, for
all $\lambda\in [0, \lambda_0]$,
it holds that 
\begin{equation}\label{zero} \partial_t x\cdot \nu(x)\geq  -H^K_{C_\lambda(t)}(x) \end{equation}  
for all $x\in \partial C_\lambda(t)\cap (\R^n\setminus B_{2R})$,
where  $$C_\lambda(t):=\{x\in C(t)\text{ s.t. }
d_{C(t)}(x)\geq \lambda\}.$$ 
Then, if  $E  \subset C(0)$, with $d(E, C(0))=k\geq 0$, it holds that  $E^+(t)\subset C(t)$ for all $ t>0$, with  $d(E^+(t), C(t))\geq k\geq 0$.

\item[ii)] Assume that there exists  $\delta>0$ such that~\eqref{subgeo}
holds for every $x\in \partial C(t)\cap B_{4R}$.
Suppose also
that~\eqref{subgeo2} holds true.

Moreover, assume that 
there exists $\lambda_0$  such that for    all $\lambda\in [0, \lambda_0]$,   it holds that 
\begin{equation}\label{zero1} \partial_t x\cdot \nu(x)\leq  -H^K_{C^\lambda(t)}(x) \end{equation}  
for all $x\in \partial C^\lambda(t)\cap (\R^n\setminus B_{2R})$ where 
$$C^\lambda(t):=\{x\in \R^n\text{ s.t. }
d_{C(t)}\geq- \lambda\}.$$ 
Then, if  $E \supseteq C(0)$,  it holds that  $E^+(t)\supseteq C(t)$ for all $ t>0$.

In addition, if
$d(C(0), \{x\in\R^n\text{ s.t. }u_E(x)> 0\})=k>0$, it holds that  $E^-(t)\supset C(t)$ for all $ t>0$, with  $d(E^-(t), C(t))\geq k$.
\end {itemize} 
\end{corollary}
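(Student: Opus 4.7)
My plan is to adapt the tubular sub/supersolution construction of Proposition~\ref{subgeometrico} to the unbounded setting. For case~i), fix $\epsilon\in(0,\lambda_0)$ small and consider
\[
u_\epsilon(x,t):=\max\big\{0,\,\min\{\epsilon,\,d_{C(t)}(x)\}\big\},
\]
which is bounded and uniformly Lipschitz. I will show that, for $\epsilon$ small enough, $u_\epsilon$ is a viscosity supersolution of~\eqref{levelset1}; once this is proved, the inclusion $E^+(t)\subseteq C(t)$ and the distance-preserving refinement $d(E^+(t),C(t))\geq k$ follow from the Comparison Principle of Theorem~\ref{comparison}, via Corollary~\ref{cpgeometric}, by comparing $u_\epsilon$ with an initial profile $u_E$ of~$E$, exactly as in the compact case. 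Case~ii) will be settled symmetrically using the outer tubes $C^\lambda(t)$ in place of $C_\lambda(t)$ and an analogous subsolution, invoking Corollary~\ref{cpgeometric}~iii).

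The verification that $u_\epsilon$ is a supersolution proceeds by the same case analysis as in Proposition~\ref{subgeometrico}. At interior points of $C_\epsilon(t)$ or of $\R^n\setminus C(t)$ the spatial gradient vanishes and the inequality is trivial; at points of $\partial C_\epsilon(t)$ the set of admissible test functions is empty; at angular points of the level sets $\partial C_\lambda(t)$, $\lambda\in(0,\epsilon)$, hypothesis~\eqref{supergeo2}, which is inherited by every $C_\lambda(t)$, again rules out admissible test functions. The substantive step is the pointwise inequality at regular points $x\in\partial C_\lambda(t)$ for some $\lambda\in[0,\epsilon)$.

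The new ingredient is a spatial dichotomy. For $x\in\partial C_\lambda(t)\cap B_{4R}$, compactness of $\partial C(t)\cap\overline{B}_{4R}$ (uniformly in $t\in[0,T]$) together with the $C^{1,1}$--continuity of the $K$-curvature produces a uniform error $\eta_\epsilon\to 0$ such that $H^K_{C_\lambda(t)}(x)\geq H^K_{C(t)}(x_0)-\eta_\epsilon$ for the corresponding $x_0\in\partial C(t)$ with $|x-x_0|=\lambda$; one then absorbs $\eta_\epsilon$ by the slack $\delta>0$ in~\eqref{supergeo}, as in Proposition~\ref{subgeometrico}. For $x\in\partial C_\lambda(t)\setminus B_{2R}$, no approximation is needed: hypothesis~\eqref{zero} is phrased directly at the shifted level set $C_\lambda(t)$, yielding $\partial_t x\cdot\nu(x)+H^K_{C_\lambda(t)}(x)\geq 0$ without slack, which is exactly what the viscosity test requires. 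Points in the overlap $B_{4R}\setminus B_{2R}$ may be treated by either argument.

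The main obstacle is precisely the loss of global compactness on $\partial C(t)$: along an unbounded boundary one has no a priori uniform control of the curvature-continuity error $\eta_\epsilon$ as $\epsilon\searrow 0$, so the slack $\delta$ of~\eqref{supergeo} cannot be used to close the argument globally. The technical role of hypotheses~\eqref{zero} and~\eqref{zero1} is to bypass this by furnishing the desired inequality directly at the already-shifted level sets $C_\lambda(t)$ and $C^\lambda(t)$, making the curvature limit superfluous on the unbounded portion of the boundary. With this spatial splitting in place, the remainder of the argument---construction of the initial comparison profile, application of the Comparison Principle, extraction of the distance-preserving refinement---proceeds verbatim as in Proposition~\ref{subgeometrico}.
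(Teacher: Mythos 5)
Your construction is exactly the one the paper has in mind: the paper's own proof is omitted as ``similar to that of Proposition~\ref{subgeometrico},'' and your proposal reproduces that tubular sub/supersolution argument, supplemented by the correct spatial dichotomy---on the bounded portion $B_{4R}$ one uses the compactness-based curvature-continuity estimate to absorb the error into the slack $\delta$ of~\eqref{supergeo} (resp.\ \eqref{subgeo}), while on $\R^n\setminus B_{2R}$ the hypothesis~\eqref{zero} (resp.\ \eqref{zero1}) is invoked directly on the shifted level sets $C_\lambda(t)$ (resp.\ $C^\lambda(t)$), making the $\eta_\epsilon$-approximation unnecessary there. This matches both the role of the additional hypotheses and the structure of the proof the paper is pointing to.
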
 

The proof of Corollary~\ref{unboundedcoro}
is similar to that of Proposition \ref{subgeometrico}, and we omit the details.

We also have the following semicontinuity
type result for the outer evolutions. 

\begin{proposition}\label{scs}   There holds \begin{equation}\label{PUNT2}
\liminf_{\eta\searrow 0} 
\left| E^+(t+\eta)\right|\geq | {\rm{int}} \,E^+(t)|. 
\end{equation}
\end{proposition}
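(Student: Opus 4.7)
The plan is to reduce \eqref{PUNT2} to the pointwise claim that for every $x \in {\rm{int}}\,E^+(t)$ one has $x \in E^+(t+\eta)$ for all sufficiently small $\eta > 0$ (call this $(\star)$), and then to conclude by Fatou's lemma. Indeed, granting $(\star)$ one has $\liminf_{\eta \searrow 0}\chi_{E^+(t+\eta)}(x) \ge \chi_{{\rm{int}}\,E^+(t)}(x)$ on all of $\R^n$, so that Fatou's lemma applied to the nonnegative measurable family $\{\chi_{E^+(t+\eta)}\}_{\eta>0}$ yields
\[
|{\rm{int}}\,E^+(t)| \le \int_{\R^n}\liminf_{\eta\searrow0}\chi_{E^+(t+\eta)}\,dx \le \liminf_{\eta\searrow0}|E^+(t+\eta)|,
\]
which is exactly \eqref{PUNT2}.

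To prove $(\star)$, the strategy is to insert a closed ball as an inner geometric barrier at time $t$ and to let it shrink forward in time. Fix $x \in {\rm{int}}\,E^+(t)$ and choose $r>0$ with $\overline{B_r(x)} \subseteq E^+(t)$, which is possible since ${\rm{int}}\,E^+(t)$ is open. Since the operator in \eqref{levelset1} is time-autonomous, the uniqueness statement in Theorem \ref{comparison} implies that $v(y,\eta) := u_E(y,t+\eta)$ is the bounded continuous viscosity solution of \eqref{levelset1} with initial datum $u_E(\cdot,t)$; in particular it is a viscosity supersolution with $\{v(\cdot,0)\ge 0\} = E^+(t) \supseteq \overline{B_r(x)}$. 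Applying Corollary \ref{cpgeometric} ii) with this $v$ and with $F := \overline{B_r(x)}$ then gives $F^+(\eta) \subseteq E^+(t+\eta)$ for every $\eta > 0$.

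The outer flow of a closed ball is explicit: by Remark \ref{palla2}, $F^+(\eta) = \overline{B_{R(\eta)}(x)}$, where $R$ is the solution of the ODE \eqref{gvhdiuytbf78uewyr} with $R(0)=r$, and satisfies $R(\eta) > 0$ on $[0, T_r)$ by Remark \ref{palla}. Consequently, for every $\eta \in (0, T_r)$ the center $x$ lies in $\overline{B_{R(\eta)}(x)} \subseteq E^+(t+\eta)$, which proves $(\star)$ with threshold $\eta_x := T_r$. The only non-routine point is the semigroup identification that lets us regard $u_E(\cdot,t+\cdot)$ as a viscosity solution starting from $u_E(\cdot,t)$, needed to invoke Corollary \ref{cpgeometric} ii); this is however immediate from the uniqueness in Theorem \ref{comparison} combined with the time-autonomy of \eqref{levelset1}, so no real obstacle remains, and the proof is then just a combination of the inner barrier principle with the explicit ball evolution.
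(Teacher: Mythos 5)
Your proof is correct and follows essentially the same argument as the paper's: take a small ball around an interior point of $E^+(t)$, use Remark \ref{palla2} and Corollary \ref{cpgeometric} to show that its explicit shrinking evolution stays inside $E^+(t+\eta)$ for small $\eta$, and conclude via Fatou. The only cosmetic difference is that you spell out the time-shift/semigroup identification (that $u_E(\cdot,t+\cdot)$ solves \eqref{levelset1} from datum $u_E(\cdot,t)$), which the paper invokes implicitly.
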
 
\begin{proof}
We claim that for any fixed~$t>0$ and a.e. in~$\R^n$,
\begin{equation}\label{PUNT}
\liminf_{\eta\searrow 0}\chi_{ \left\{ u_E\left(\cdot, t+\eta\right)
\ge0\right\} }\ge\chi_{ 
\left\{ {\rm{int}}(\{u_E\left(\cdot, t\right)
\ge0\})\right\} }.
\end{equation}
To show \eqref{PUNT}, it
is enough to consider a point $x\in {\rm{int}}(\{u_E\left(\cdot, t\right)\ge0\})$, so that 
$\{u_E\left(\cdot, t\right)\ge0\}\supset B_r(x)$ for some $r>0$. 
Then, recalling formula~\eqref{evball} in Remark \ref{palla}, we have that 
$C(r(\eta))=C(r)-\eta$, for 
$\eta\in \left(0, T_r\right)$, where $T_r>0$ is the extinction time of the ball $B_r$ under the flow \eqref{kflow}. 
 
Hence, by  Remark \ref{palla2} and Corollary \ref{cpgeometric} we get 
\[
\{u_E\left(\cdot, t+\eta\right)\ge0\}\supset B_{r(\eta)}(x),
\qquad \text{for all }\eta\in \left(0,T_r\right).
\]
In particular, it follows that 
\[
\liminf_{\eta \searrow 0} u_E(x,t+\eta)\ge 0,\qquad \text{for all }x\in  {\rm{int}}
\{u_E
\left(\cdot, t\right)\ge0\},
\] 
which implies \eqref{PUNT}.

Then, by~\eqref{PUNT} and the Fatou Lemma, for all $t>0$ we obtain
\begin{eqnarray*} &&
\liminf_{\eta\searrow 0}
\left|
E^+(t+\eta)\right|=\liminf_{\eta\searrow 0} \int_{\R^n} \chi_{ \left\{ u_E\left(\cdot, t+\eta\right)
\ge0\right\} }(x)\,dx\\ &&\qquad\ge \int_{\R^n} \liminf_{\eta\searrow 0}\chi_{ \left\{ u_E\left(\cdot, t+\eta\right)
\ge0\right\} }(x)\,dx\ge \big|
{\rm{int}} \left(\left\{ u_E\left(\cdot, t\right)
\ge0\right\} \right)
\big|= \big| \text{int } E^+(t)\big|,
\end{eqnarray*}
establishing~\eqref{PUNT2}.
\end{proof} 

In the case of homogeneous kernels, i.e. under the assumption in~\eqref{NUCLEONORMA},
the geometric flow possesses a useful time scaling property as follows. 
\begin{lemma}\label{A-102-B} Assume that $K(x)=\frac{1}{|x|^{n+s}}$
for some $s\in (0,1)$. 
Let~$\lambda>0$, $M>0$, 
$E\subseteq\R^n$
and~$u_{E,\lambda}(x,t)$ be the viscosity solution
to \eqref{levelset1} 
with initial condition given by $$u_{E, \lambda}(x):=
\max\big\{-\lambda M, \;\min\{d_{\lambda E}(x),\lambda M\}\big\}.$$ 
Let also~$E_\lambda^+(t):=\{x\in\R^n {\mbox{ s.t. }} u_{E,\lambda}(x,t)\ge0\}$
and~$E_\lambda^-(t):=\{x\in\R^n {\mbox{ s.t. }} u_{E,\lambda}(x,t)>0\}$. Then
$$ E_\lambda^\pm(t)=\lambda E_1^\pm \left(\frac{t}{\lambda^{1+s}}\right).$$
\end{lemma}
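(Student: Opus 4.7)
\medskip

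\noindent\textbf{Proof proposal for Lemma \ref{A-102-B}.}
The plan is to exploit the homogeneity of the kernel $K(x)=|x|^{-(n+s)}$ to show that the rescaled function
$$v(x,t):=\lambda\,u_{E,1}\!\left(\tfrac{x}{\lambda},\tfrac{t}{\lambda^{1+s}}\right)$$
is the unique viscosity solution to the level set equation~\eqref{levelset1} with initial datum $u_{E,\lambda}$, and then read off the conclusion on the sets $E_\lambda^\pm(t)$ from Remark~\ref{geosigma}.

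The first step is a scaling identity for the $K$-curvature. For any measurable $F\subseteq\R^n$ and any $x\in\partial F$, the change of variables $y=\lambda z$ in~\eqref{CURVA} gives
$$
H^K_{\lambda F}(\lambda x)=\lim_{\varepsilon\searrow 0}\int_{\R^n\setminus B_\varepsilon(\lambda x)}\bigl(\chi_{\R^n\setminus \lambda F}(y)-\chi_{\lambda F}(y)\bigr)\frac{dy}{|\lambda x-y|^{n+s}}=\lambda^{-s}\,H^K_F(x),
$$
where we used that $\lambda^n/\lambda^{n+s}=\lambda^{-s}$ and that the principal value neighborhood rescales consistently. In particular, for any level set $\{y\,:\,v(y,t)\ge v(x,t)\}$, writing $y=\lambda z$ shows that this set equals $\lambda F$ with $F=\{z\,:\,u_{E,1}(z,t/\lambda^{1+s})\ge u_{E,1}(x/\lambda,t/\lambda^{1+s})\}$, and hence
$$
H^K_{\{y\,:\,v(y,t)\ge v(x,t)\}}(x)=\lambda^{-s}\,H^K_F\!\left(\tfrac{x}{\lambda}\right).
$$

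Next I would verify that $v$ is a viscosity solution of~\eqref{levelset1}. Given a smooth test function $\phi$ with a global maximum of $v-\phi$ at $(x_0,t_0)$, the function $\psi(z,\tau):=\lambda^{-1}\phi(\lambda z,\lambda^{1+s}\tau)$ is a test function for $u_{E,1}$ at $(x_0/\lambda,t_0/\lambda^{1+s})$ with matching sign conditions, and a direct computation gives
$$
\partial_t\phi(x_0,t_0)=\lambda^{-s}\partial_\tau\psi,\qquad D\phi(x_0,t_0)=D\psi,
$$
evaluated at the rescaled point. Combining these identities with the curvature scaling above turns the subsolution inequality for $u_{E,1}$ into the corresponding inequality for $v$, multiplied by the positive factor $\lambda^{-s}$; the same argument handles the supersolution case, and the degenerate case $D\phi=0$ reduces in the same way. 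Thus $v$ is a viscosity solution of~\eqref{levelset1}. The initial condition matches as well, since by $d_{\lambda E}(x)=\lambda\,d_E(x/\lambda)$ one has
$$
v(x,0)=\lambda\max\bigl\{-M,\min\{d_E(x/\lambda),M\}\bigr\}=\max\bigl\{-\lambda M,\min\{d_{\lambda E}(x),\lambda M\}\bigr\}=u_{E,\lambda}(x).
$$

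Finally, by the uniqueness part of Theorem~\ref{comparison}, we obtain $u_{E,\lambda}(x,t)=\lambda\,u_{E,1}(x/\lambda,t/\lambda^{1+s})$ for all $(x,t)$. The identities
$$
E_\lambda^+(t)=\{x\,:\,u_{E,\lambda}(x,t)\ge 0\}=\lambda\{z\,:\,u_{E,1}(z,t/\lambda^{1+s})\ge 0\}=\lambda E_1^+\!\left(\tfrac{t}{\lambda^{1+s}}\right)
$$
and the analogous one for $E_\lambda^-$ then follow at once, and by Remark~\ref{geosigma} these flows do not depend on the particular Lipschitz representative chosen. The only genuinely delicate point is the bookkeeping of the scaling in the viscosity test-function argument (making sure the sign of $D\phi$, the geometric nature of the equation, and the principal-value prescription in~\eqref{CURVA} all transform consistently); everything else is a direct algebraic check.
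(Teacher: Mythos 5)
Your proposal is correct and follows essentially the same route as the paper's proof: match the initial data via $d_{\lambda E}(x)=\lambda\,d_E(x/\lambda)$, use the $\lambda^{-s}$-homogeneity of the fractional curvature under dilations, verify that the rescaled function solves the level set equation, and invoke uniqueness from Theorem~\ref{comparison}. The only substantive difference is that you spell out the viscosity test-function bookkeeping (the paper states the rescaled function is a viscosity solution without detail), and you record the scaling identity in the form $H^K_{\lambda F}(\lambda x)=\lambda^{-s}H^K_F(x)$, which is the correct direction — the paper's displayed line $H^s_E(x)=\lambda^{-s}H^s_{\lambda E}(\lambda x)$ has the sign of the exponent flipped (it should read $H^s_E(x)=\lambda^{s}H^s_{\lambda E}(\lambda x)$), consistent with your formula and with $c(R)=c(1)R^{-s}$ from Lemma~\ref{lemmacerchi}.
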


\begin{proof}   For every $x\in \R^n$ such that $-M\leq d_E(x)\leq M$,
we have that 
$$\lambda u_{E,1}(x)=\lambda d_E(x)= d_{\lambda E} (\lambda x)= 
u_{E,\lambda }(\lambda x).$$  
Moreover if $d_E(x)\geq  M$, then $\lambda u_{E,1}(x)=\lambda M= u_{E,\lambda }(\lambda x)$, and analogously for $d_E(x)\leq M$. 
Therefore we get that $\lambda u_{E,1}\left(\frac{x}{\lambda}\right)=u_{E,\lambda}(x)$.
Moreover, by the scaling properties of $K$, we have that
$H^s_E(x)=\lambda^{-s} H^s_{\lambda E}(\lambda x)$.  Therefore the function $ \lambda u_{E,1}\left(\frac{x}{\lambda}, \frac{t}{\lambda^{1+s}}\right)$
is a viscosity solution to \eqref{levelset1}, with initial datum $u_{E,\lambda}(x)$. 
By the uniqueness of viscosity solutions, given in
Theorem \ref{comparison}, we get that $\lambda u_{E,1}\left(\frac{x}{\lambda},\frac{t}{\lambda^{1+s}}\right)=u_{E,\lambda}(x,t)$. 
{F}rom this we deduce the desired statement. 
\end{proof} 
\end{appendix}

\bigskip

\paragraph{\bf Acknowledgements} This work has been
supported by the Australian Research Council 
Discovery Project DP170104880 ``NEW -- Nonlocal Equations at Work'',
and by the University of Pisa Project PRA 2017 ``Problemi di ottimizzazione e di evoluzione in ambito variazionale". The authors are members of the INdAM-GNAMPA.

\bigskip 

\begin{bibdiv}
\begin{biblist}
 
 \bib{bss}{article}{
   author={Barles, G.},
   author={Soner, H. M.},
   author={Souganidis, P. E.},
   title={Front propagation and phase field theory},
   journal={SIAM J. Control Optim.},
   volume={31},
   date={1993},
   number={2},
   pages={439--469},
   issn={0363-0129},
   review={\MR{1205984}},
   doi={10.1137/0331021},
}

\bib{MR2208291}{article}{
   author={Bellettini, Giovanni},
   author={Caselles, Vicent},
   author={Chambolle, Antonin},
   author={Novaga, Matteo},
   title={Crystalline mean curvature flow of convex sets},
   journal={Arch. Ration. Mech. Anal.},
   volume={179},
   date={2006},
   number={1},
   pages={109--152},
   issn={0003-9527},
   review={\MR{2208291}},
   doi={10.1007/s00205-005-0387-0},
}
	
\bib{bell}{article}{
   author={Bellettini, Giovanni},
   author={Paolini, Maurizio},
   title={Two examples of fattening for the curvature flow with a driving
   force},
   journal={Atti Accad. Naz. Lincei Cl. Sci. Fis. Mat. Natur. Rend. Lincei
   (9) Mat. Appl.},
   volume={5},
   date={1994},
   number={3},
   pages={229--236},
   issn={1120-6330},
   review={\MR{1298266}},
}
	
\bib{bcl}{article}{
   author={Biton, Samuel},
   author={Cardaliaguet, Pierre},
   author={Ley, Olivier},
   title={Nonfattening condition for the generalized evolution by mean
   curvature and applications},
   journal={Interfaces Free Bound.},
   volume={10},
   date={2008},
   number={1},
   pages={1--14},
   issn={1463-9963},
   review={\MR{2383534}},
   doi={10.4171/IFB/177},
}

\bib{MR3469920}{book}{
   author={Bucur, Claudia},
   author={Valdinoci, Enrico},
   title={Nonlocal diffusion and applications},
   series={Lecture Notes of the Unione Matematica Italiana},
   volume={20},
   publisher={Springer; Unione Matematica Italiana, Bologna},
   date={2016},
   pages={xii+155},
   isbn={978-3-319-28738-6},
   isbn={978-3-319-28739-3},
   review={\MR{3469920}},
   doi={10.1007/978-3-319-28739-3},
}
	 
\bib{MR2675483}{article}{
   author={Caffarelli, Luis},
   author={Roquejoffre, Jean-Michel},
   author={Savin, Ovidiu},
   title={Nonlocal minimal surfaces},
   journal={Comm. Pure Appl. Math.},
   volume={63},
   date={2010},
   number={9},
   pages={1111--1144},
   issn={0010-3640},
   review={\MR{2675483}},
   doi={10.1002/cpa.20331},
}

\bib{cn}{article}{
   author={Cesaroni, Annalisa},
   author={Novaga, Matteo},
   title={The isoperimetric problem for nonlocal perimeters},
   journal={Discrete Contin. Dyn. Syst. Ser. S},
   volume={11},
   date={2018},
   number={3},
   pages={425--440},
   issn={1937-1632},
   review={\MR{3732175}},
   doi={10.3934/dcdss.2018023},
}

\bib{MR3401008}{article}{
   author={Chambolle, Antonin},
   author={Morini, Massimiliano},
   author={Ponsiglione, Marcello},
   title={Nonlocal curvature flows},
   journal={Arch. Ration. Mech. Anal.},
   volume={218},
   date={2015},
   number={3},
   pages={1263--1329},
   issn={0003-9527},
   review={\MR{3401008}},
   doi={10.1007/s00205-015-0880-z},
}

\bib{MR3713894}{article}{
   author={Chambolle, Antonin},
   author={Novaga, Matteo},
   author={Ruffini, Berardo},
   title={Some results on anisotropic fractional mean curvature flows},
   journal={Interfaces Free Bound.},
   volume={19},
   date={2017},
   number={3},
   pages={393--415},
   issn={1463-9963},
   review={\MR{3713894}},
   doi={10.4171/IFB/387},
}

\bib{MR1100211}{article}{
   author={Chen, Yun Gang},
   author={Giga, Yoshikazu},
   author={Goto, Shun'ichi},
   title={Uniqueness and existence of viscosity solutions of generalized
   mean curvature flow equations},
   journal={J. Differential Geom.},
   volume={33},
   date={1991},
   number={3},
   pages={749--786},
   issn={0022-040X},
   review={\MR{1100211}},
}

\bib{csv}{article}{
   author={Cinti, Eleonora},
   author={Serra, Joaquim},
   author={Valdinoci, Enrico},
   title={Quantitative flatness results and BV-estimates
   for stable nonlocal minimal surfaces},
   journal={J. Differential Geom.},
   date={2018},
}

\bib{MR3778164}{article}{
   author={Cinti, Eleonora},
   author={Sinestrari, Carlo},
   author={Valdinoci, Enrico},
   title={Neckpinch singularities in fractional mean curvature flows},
   journal={Proc. Amer. Math. Soc.},
   volume={146},
   date={2018},
   number={6},
   pages={2637--2646},
   issn={0002-9939},
   review={\MR{3778164}},
   doi={10.1090/proc/14002},
}		

\bib{MR1100206}{article}{
   author={Evans, L. C.},
   author={Spruck, J.},
   title={Motion of level sets by mean curvature. I},
   journal={J. Differential Geom.},
   volume={33},
   date={1991},
   number={3},
   pages={635--681},
   issn={0022-040X},
   review={\MR{1100206}},
}

\bib{MR2238463}{book}{
   author={Giga, Yoshikazu},
   title={Surface evolution equations. A level set approach},
   series={Monographs in Mathematics},
   volume={99},
   publisher={Birkh\"auser Verlag, Basel},
   date={2006},
   pages={xii+264},
   isbn={978-3-7643-2430-8},
   isbn={3-7643-2430-9},
   review={\MR{2238463}},
}

\bib{MR1189906}{article}{
   author={Ilmanen, Tom},
   title={Generalized flow of sets by mean curvature on a manifold},
   journal={Indiana Univ. Math. J.},
   volume={41},
   date={1992},
   number={3},
   pages={671--705},
   issn={0022-2518},
   review={\MR{1189906}},
   doi={10.1512/iumj.1992.41.41036},
}
	
\bib{MR2487027}{article}{
   author={Imbert, Cyril},
   title={Level set approach for fractional mean curvature flows},
   journal={Interfaces Free Bound.},
   volume={11},
   date={2009},
   number={1},
   pages={153--176},
   issn={1463-9963},
   review={\MR{2487027}},
   doi={10.4171/IFB/207},
}

\bib{m}{article}{
   author={Maz\cprime ya, Vladimir},
   title={Lectures on isoperimetric and isocapacitary inequalities in the
   theory of Sobolev spaces},
   conference={
      title={Heat kernels and analysis on manifolds, graphs, and metric
      spaces},
      address={Paris},
      date={2002},
   },
   book={
      series={Contemp. Math.},
      volume={338},
      publisher={Amer. Math. Soc., Providence, RI},
   },
   date={2003},
   pages={307--340},
   review={\MR{2039959}},
   doi={10.1090/conm/338/06078},
} 

\bib{SAEZ}{article}{
   author = {{S{\'a}ez}, Mariel},
   author = {Valdinoci, Enrico},
    title = {On the evolution by fractional mean curvature},
  journal = {Comm. Anal. Geom.},
   volume={27},
   date = {2019},
    number={1},
}

\bib{MR3090533}{article}{
   author={Savin, Ovidiu},
   author={Valdinoci, Enrico},
   title={Regularity of nonlocal minimal cones in dimension 2},
   journal={Calc. Var. Partial Differential Equations},
   volume={48},
   date={2013},
   number={1-2},
   pages={33--39},
   issn={0944-2669},
   review={\MR{3090533}},
   doi={10.1007/s00526-012-0539-7},
}
	
\bib{soner}{article}{
  author={Soner, Halil Mete},
   title={Motion of a set by the curvature of its boundary},
   journal={J. Differential Equations},
   volume={101},
   date={1993},
   number={2},
   pages={313--372},
   issn={0022-0396},
   review={\MR{1204331}},
   doi={10.1006/jdeq.1993.1015},
}

\end{biblist}\end{bibdiv}

\end{document}